\numberwithin{equation}{section}
\def\@listi{\leftmargin\leftmargini
              \parsep 0\p@ \@plus2\p@
              \topsep 2\p@ \@plus1\p@
              \itemsep 2\p@ \@plus1\p@}
\let\@listI\@listi
\newtheorem{theorem}{Theorem}[section]
\newtheorem{lemma}[theorem]{Lemma}
\newtheorem{proposition}[theorem]{Proposition}
\newtheorem{corollary}[theorem]{Corollary}
\theoremstyle{definition}
\newtheorem{definition}[theorem]{Definition}
\newtheorem{example}[theorem]{Example}
\newtheorem{remark}[theorem]{Remark}
\newcommand{\la}{\langle}
\newcommand{\ra}{\rangle}
\newcommand{\Al}[1][A]{\ensuremath{\mathbf{#1}} }
\newcommand{\X}{X} 
\newcommand{\A}{\Al}
 \newcommand{\DL}{\mathsf{D}}
\newcommand{\vocab}[1]{\emph{#1}}                  
\newcommand{\class}[1]{\mathsf{#1}}                
 \newcommand{\imp}{\mathbin\to}
 \let\Omega=\varOmega
 \let\Gamma=\varGamma
 \let\Lambda=\varLambda
 \let\Sigma=\varSigma
 \let\Psi=\varPsi
 \let\Delta=\varDelta
 \let\Pi=\varPi
 \let\Theta=\varTheta
\def\zseq#1{\lseq #1 \rseq}                        
\def\zseq#1{\lseq #1 \rseq}                        
\let \lseq \langle                                 
\let \rseq \rangle                                 
\def\zset#1{\{ #1 \}}                              
\newcommand{\pr}{\mathop{\preceq}}
\newcommand{\nnot}{\mathop{\sim}}                  
\renewcommand{\nu}{\Diamond}
\let\imp\Rightarrow                                
\def\zseq#1{\lseq #1 \rseq}                        
\newcommand{\alg}[1]{\mathbf{#1}}                  
\def\zseq#1{\lseq #1 \rseq}                        
\let \lseq \langle                                 
\let \rseq \rangle                                 
 \let\equ==
 \let\implies=\Rightarrow
\newcommand{\eval}[2][\right]{\relax
   \ifx#1\right\relax \left.\fi#2#1\rvert}
\let\abs=\envert
\renewcommand{\preceq}{\preccurlyeq}
\let\imp\Rightarrow                                
\let\phi\varphi
\newcommand{\Ll}[1][A]{\ensuremath{\mathbf{#1}} }
\newcommand{\PN}{\mathbf{N4}}
\newcommand{\N}{\mathbf{N3}}
\newcommand{\logic}[1]{\mathbf{#1}}                
\newcommand{\ie}{\textit{i.e.\/}}                  
\newcommand{\cm}{\mathrel{\Xi}}                    
 \newcommand{\PS}{{\mathsf{PrSp}}} 
 \newcommand{\DM}{{\mathsf{DM}}}
 \newcommand{\Tw}{{\mathsf{Tw}}}
 \newcommand{\NF}{{\mathsf{N4}}}
  \newcommand{\DMS}{{\mathsf{DMSp}}}
    \newcommand{\NFS}{{\mathsf{N4Sp}}}
        \newcommand{\NE}{{\mathsf{NESp}}}
\def\ba{\backslash}
\let\lmb\bigl                                      
\let\rmb\bigr                                      
\newcommand{\UP}{\blacktriangle}                              
\newcommand{\DOWN}{\blacktriangledown}                        
\newcommand{\Up}{\vartriangle}
\newcommand{\Down}{\triangledown}
\begin{document}

\title{Nelson algebras, residuated lattices and rough sets: A~survey}

\author{Jouni J\"{a}rvinen}
\address[J.~J{\"a}rvinen]{Software Engineering, LUT School of Engineering Science, Mukkulankatu~19, 15210 Lahti, Finland}
\email{jouni.jarvinen@lut.fi}

\author{S\'{a}ndor Radeleczki}
\address[S.~Radeleczki]{Institute of Mathematics, University of Miskolc, 3515 Miskolc-Egyetemv\'{a}ros, Hungary}
\email{matradi@uni-miskolc.hu}

\author{Umberto Rivieccio}
\address[U.~Rivieccio]{Departamento de L\'ogica e Historia y Filosof\'ia de la Ciencia,  
UNED, Madrid, Spain}
\email{umberto@fsof.uned.es}
%

\date{\today}

\vspace*{-10mm}

\maketitle

\begin{abstract}
Over the past 50 years, Nelson algebras have been extensively studied by distinguished scholars as the algebraic counterpart of Nelson's constructive logic with strong negation. Despite these studies, a comprehensive survey of the topic is currently lacking, and the theory of Nelson algebras remains largely unknown to most logicians. This paper aims to fill this gap by focussing on the essential developments in the field over the past two decades. Additionally, we  explore generalisations of Nelson algebras, such as N4-lattices which correspond to the paraconsistent version of Nelson's logic, as well as their applications to other areas of interest to logicians, such as duality and rough set theory.  A general representation theorem states that each Nelson algebra is isomorphic to a subalgebra of a rough set-based Nelson algebra 
induced by a
quasiorder. Furthermore, a formula  is a theorem of  Nelson logic if and only if
it is valid in every finite 
Nelson algebra induced by a quasiorder.
\end{abstract}

\section{Introduction}

In 1949, David Nelson introduced \emph{constructive logic with strong negation}, often called simply  \emph{Nelson logic}. The introduction of Nelson logic was part of a larger effort to provide a constructive account of mathematical reasoning, as evidenced by Nelson's later papers such as \cite{Almu84,Nels59}. While the intuitionists focused on the analysis of the notion of truth, Nelson's proposal sought to provide a formal framework for a constructive analysis of the notion of falsity.

Subsequent research since the 1950s has shown that the theory of Nelson logic 
is interesting beyond its original motivations. Helena Rasiowa was among the first  scholars to take an interest in the topic, initiating the investigation 
of algebraic models of Nelson logic, which are now known as $\N$-lattices or \emph{Nelson algebras}. More recently, following in Rasiowa's footsteps, Sergei Odintsov characterised the algebraic models of the paraconsistent weakening of Nelson logic, which were introduced in \cite{Almu84} and later named as $\PN$-lattices \cite{Odin03,Odin04}. From the study of these classes of algebras, a rich structure theory has emerged, and interesting connections with other algebraic models of non-classical logics have been discovered.

While partial overviews of Nelson logic and its algebraic models can be found in the paper~\cite{Vaka06} and the second part of the book~\cite{Odin08}, no comprehensive survey currently exists. We recommend these works to the reader for further historical background. The present work focuses on the recent developments of the last two decades.
  
Since the turn of the twenty-first century, several noteworthy results have emerged in this area that we believe should be consolidated and reviewed in a single publication. Specifically, we are referring to the following:

\begin{itemize}
\item the extension of the theory of Nelson algebras to the paraconsistent setting of {$\PN$-lattices};

\item recent developments on the representation of $\N$- and $\PN$-lattices, as well as related algebras such as bilattices, using twist structures;

\item novel views of Nelson logic as a substructural logic and of paraconsistent Nelson logic as a relevance logic, both based on term equivalence results between the corresponding classes of algebras;
 
\item the connection between $\N$-lattices and the theory of \emph{rough sets}.
\end{itemize}

Due to space and time limitations, it will not be possible to cover all the recent interesting developments in this survey. However, we would like to mention some noteworthy ones, including:
\begin{itemize}

\item  abstract, universal algebraic characterisations of Nelson algebras~\cite{Nasc1z};

\item investigations of closely related logico--algebraic systems, such as the substructural logic introduced
by Nelson under the name of $\mathcal{S}$~\cite{Nasc1y,Nasc1x};  

\item the extension of the theory of Nelson algebras beyond  the involutive setting~\cite{thiago2021negation,riviecciotwonegs,rivieccio2021fragments,rivieccio2021quasi,riviecciosp2021quasi}.
\end{itemize}

In creating this contribution, we aimed to balance two competing demands. On the one hand, we sought to produce a survey that would provide specialists with quick updates on the latest developments in the field. On the other hand, we aimed to make the content accessible to a wider community, including experts in algebra and logic who may not be familiar with Nelson logic and algebras. We hope to have fulfilled both of these goals, but especially the latter. We believe that bridging the gap between researchers in algebra and logic and experts in Nelson-like logics and algebras could be highly beneficial for the community, given the exciting contemporary developments referenced 
above.

The rest of the  paper is organised as follows. 
Section~\ref{sec:n4} provides an introduction to Nelson logic~\cite{Nels49} and its paraconsistent counterpart~\cite{Almu84}. While we briefly touch upon the former, we mainly focus on the latter and its algebraic models, known as $\PN$-lattices. The decision to present the material in this order, contrary to the historical development of the field, is motivated by pedagogical reasons. In fact, results concerning $\PN$-lattices readily specialise to $\N$-lattices, which we discuss in Section~\ref{sec:n3}.

After covering the fundamental theory, we introduce the twist-structure representation of $\PN$-lattices and discuss its key applications in Subsection~\ref{ss:tw}. As we show in Subsection~\ref{ss:appabs}, the use of twist structures extends far beyond the context of Nelson logics. In the final subsection (\ref{ss:pri}), we provide a brief review of the Priestley-style duality for $\PN$-lattices, which we revisit in Section~\ref{sec:rough} from the perspective of rough set theory.

In Section~\ref{sec:n3} we move towards results that are specific to $\N$-lattices, but we begin 
by showing how both $\N$- and $\PN$-lattices may be presented as residuated algebras (Subsection~\ref{ss:nasres}). We then restrict our attention to $\N$-lattices, 
discussing a number of alternative characterisations 
 (Subsection~\ref{ss:fuca}), which can be grouped into (1) the syntactic, i.e.~obtained by means of 
identities other than~\eqref{Eqn:Nel}, (2) the order-theoretic (e.g.~Proposition~\ref{Prop:eqord}) and 
(3) the congruence-theoretic, leading to the notion of congruence-orderability. 

In Section~\ref{sec:rough} we introduce  rough sets determined by quasiorders, which  give rise to special Nelson algebras. 
Rough set theory was initially based on the assumption that knowledge about the objects of a universe of discourse is expressed in terms of an equivalence  interpreted so that two objects are related if we cannot distinguish them by using the information we have. Rough sets induced by  equivalences form regular double Stone algebras. 
Moreover, each Nelson algebra defined on an algebraic lattice is isomorphic to the rough set algebra based on a quasiorder. We  provide a general representation theorem, 
stating that each Nelson algebra is isomorphic to a subalgebra of a Nelson algebra induced by a quasiorder. Additionally, in this section, we give a completeness result with the finite model property for Nelson logic. The section concludes by exploring residuated lattices determined by rough sets.

\section{Paraconsistent Nelson logic and  $\PN$-lattices} 
\label{sec:n4}
 
\subsection{Nelson logic and its algebraic semantics} 
\label{subsec:nel}

Nelson's system $\N$ is a well-known 
non-classical logic
that combines the constructive approach of 
intuitionistic logic with a De~Morgan negation.
 
A  Hilbert-style calculus for $\N$ may be obtained by 
expanding the positive fragment of intuitionistic logic (axioms (A1) to (A8) in Subsection~\ref{ss:pcl})
with a new \emph{strong negation} connective
$\nnot$ whose behaviour is captured by the following  schemata:
\begin{enumerate}[label=(A\arabic*)]
\setcounter{enumi}{8}
\item $\nnot \nnot \phi \leftrightarrow \phi$
\item $\nnot (\phi \lor\psi) \leftrightarrow (\nnot \phi\land\nnot \psi)$
\item $\nnot (\phi \land\psi) \leftrightarrow (\nnot \phi\lor\nnot \psi)$
\item $\nnot (\phi \to \psi) \leftrightarrow (\phi\land\nnot \psi)$

\item $\phi \to (\nnot \phi \to \psi)$.
\end{enumerate}

The algebraic models of $\N$ form a variety known as \emph{Nelson algebras}  (or \emph{Nelson residuated lattices} or \emph{$\N$-lattices}): this variety has been studied 
since at least the late 1950s (firstly by H.~Rasiowa; see~\cite{Rasi74} and  references therein) and is 
by now 
fairly well understood. 
One of the  main algebraic insights on Nelson algebras was obtained, towards the end of the 1970s, with the discovery
(independently due to M.~M.~Fidel and D.~Vakarelov)
that every $\N$-lattice can be represented as a special binary product 
of (here called a \emph{twist-structure over})  a Heyting algebra.
This correspondence was formulated   by A.~Sendlewski, in the early 1990's, as a categorical equivalence between 
$\N$-lattices
and a category of enriched Heyting algebras, which  made it possible to translate a number of fundamental results from the more developed  theory of intuitionistic  logic into the Nelson realm. 

After  Sendlewski's work, 
the most important advance in the theory of $\N$-lattices 
has probably been  the discovery
   that Nelson logic can be viewed as  one among the so-called substructural logics.
  This result
-- first proved  in 2008 by M.~Spinks and R.~Veroff~\cite{Spin08a,Spin08b} -- 
   entails that, modulo  algebraic signature formalities, 
   $\N$-lattices
   can be presented as a subvariety of (commutative, integral, bounded) residuated lattices~\cite{GaJiKoOn07}; whence the alternative name of   \emph{Nelson residuated lattices}.
Given the flourish of studies on substructural logics and residuated structures (leading to and following the book~\cite{GaJiKoOn07}),
this alternative perspective on 
$\N$-lattices
turned out to be very fruitful. Indeed, it  made in the first place possible 
to recover or recast a number of results on 
$\N$-lattices
by specialising  more general ones about
residuated structures. Furthermore, and perhaps more interestingly, it allowed scholars to formulate new questions
that can  be best appreciated within the broader framework of residuated lattices.


Formally, a  \vocab{commutative residuated lattice} (CRL)
is an algebra $\A = \zseq{A; \land, \lor, *, \imp, 1}$ of type $\zseq{2, 2, 2, 2, 0}$
such that: 
\begin{enumerate}[label=(\roman*)]
\item $\zseq{A; \land, \lor }$ is a lattice (with an order $\leq$);
\item $\zseq{A; *, 1 }$ is a commutative monoid;
\item for all $a, b, c \in A$, we have
$a * b \leq c$ iff $a \leq b \imp c$
. 
\end{enumerate}
The last property, known as \emph{residuation},  may be expressed within the setting of residuated lattices
purely by means of identities.
Thus the class of all 
CRLs
is equational.
A commutative residuated lattice $\A = \zseq{A; \land, \lor, *, \imp, 1}$ is \emph{integral} when the constant $1$ is interpreted 
as the greatest element of the  lattice $\zseq{A; \land, \lor }$, and it is \emph{bounded}
when $\zseq{A; \land, \lor }$ also has a least element -- denoted by $0$, and usually added to the algebraic
language as a constant. We  abbreviate 
``commutative integral (bounded) residuated lattice'' as CI(B)RL.

On each CIBRL $\A = \zseq{A; \land, \lor, *, \imp, 1, 0 }$, 
a \emph{negation} operator is usually defined by 
$\nnot x : = x \imp 0$. Using this abbreviation, we may 
compactly express two key properties of $\N$-lattices, namely  
\emph{involutivity} 
and the \emph{Nelson identity}. 
Formally, a CIBRL 
is \emph{involutive} when it 
satisfies the double negation identity: 
\begin{gather}
\nnot \nnot x \approx x. \tag{Inv} \label{Eqn:Inv}
\end{gather}
In fact, ``one half'' of the above identity, namely $x \leq \nnot \nnot x$ 
(which, as is customary, we consider an abbreviation of the identity $x \approx x \land \nnot \nnot x$) is satisfied by every CIBRL.

$\N$-lattices are involutive CIBRLs, but not every involutive CIBRL is an $\N$-lattice. The last missing ingredient is the 
\emph{Nelson identity}:
\begin{gather}
\lmb(x \imp (x \imp y)\rmb) \land \lmb(\nnot y \imp (\nnot y \imp \nnot x)\rmb) \approx x \imp y. \tag{Nelson} \label{Eqn:Nel}
\end{gather}
Again, we note that 
the following
half of the Nelson identity is easily seen to be satisfied by every CIBRL: 
$$
x \imp y \leq \lmb(x \imp (x \imp y)\rmb) \land \lmb(\nnot y \imp (\nnot y \imp \nnot x)\rmb).
$$
$\N$-lattices may be defined, as a subvariety of CIBRLs, precisely by 
the identities~\eqref{Eqn:Inv} and~\eqref{Eqn:Nel}.

The term $x \imp (x \imp y)$ appearing in the left side of identity~\eqref{Eqn:Nel} plays an important role within the theory of  $\N$-lattices. Indeed, letting
$$x \to y := x \imp (x \imp y)$$ 
we obtain the implication connective $\to$
(known as \emph{weak implication}) which was originally used in Nelson's presentation of the logic.
Then~\eqref{Eqn:Nel} can  be rewritten more compactly as:
$$
x \imp y \approx \lmb(x \to y \rmb) \land \lmb(\nnot y \to  \nnot x).
$$
This rephrasing suggests that, conversely, the residuated (usually known as the \emph{strong}) implication is
term definable from the weak one (provided the conjunction and the negation are available).
The interplay between the two implications is  one of the distinctive features of Nelson logics: 
while the weak one plays a prominent role from a logical point of view -- it is the implication that enjoys the Deduction Theorem,
and which allows us to view Nelson logic as a conservative expansion of positive intuitionistic logic --
the strong one witnesses the algebraisability~\cite{Blok89} of Nelson logic, and allows us to view  $\N$ as a substructural logic.

In the next subsection we shall see an alternative presentation of $\N$-lattices
which takes the weak (rather than the strong) implication as primitive, and is therefore closer to the original setting 
 proposed by Nelson. 

\subsection{Paraconsistent constructive logic with strong negation} \label{ss:pcl}

\emph{Paraconsistent constructive logic with strong negation} ($\PN$) was introduced by Nelson
in~\cite{Almu84} -- though equivalent systems were  also independently considered in the earlier papers~\cite{Lope72,Rout74} -- 
as a 
generalisation
of 
$\N$
which might 
``be applied to inconsistent subject matter without necessarily generating a trivial theory''~\cite[p.~231]{Almu84}.
 Formally, $\PN$ is also (as is 
 $\N$) a conservative expansion of the negation-free fragment of the intuitionistic propositional calculus 
 \cite[Ch.~X]{Rasi74} by a unary logical connective $\nnot$ 
 of \emph{strong negation}~\cite[p.~323]{Spin18}.

The  logico-algebraic community devoted comparatively little attention\footnote{But see, e.g.,~\cite{Wans95},
where $\PN$ is proposed as a logic for non-monotonic reasoning.}
to
$\PN$ 
until 
S.~Odintsov's contribution
in the early 2000's, in which
the nowadays standard
algebraic semantics of 
(the propositional part of)
$\PN$ was introduced and investigated; 
this research, originating from Odintsov's DSc thesis~\cite{Odin07},
was published in the series of papers~\cite{Odin03,Odin04,Odin05} and the monograph~\cite{Odin08}. 
Thanks also to the effort of a few other authors~\cite{Busa09,Kami12,Spin18}, 
$\PN$ is nowadays known to play a central role in the study of three- and four-valued logics. 


Odintsov has shown that the algebraic models of $\PN$
form a class of De Morgan lattices structurally enriched with a (
``weak'')
implication, which he dubbed \emph{$\PN$-lattices}.
Generalising the works of Fidel, Vakarelov, and Sendlewski on 
$\N$-lattices,
he also provided a most useful representation theorem for $\PN$-lattices 
by means of \emph{twist-structures}, which we shall review in Subsection~\ref{ss:tw}.

The basic propositional  language of $\PN$ 
is 
the same as that of the constant-free version of 
$\N$ as originally introduced by Nelson, i.e.~$\{\land, \lor, \to, \nnot \}$. 
Further connectives $*, \imp, \leftrightarrow$, and $\Leftrightarrow$
may be defined as follows:
the \emph{strong implication}
$\imp$ is given by 
$$
\phi \imp \psi := (\phi \to \psi) \land (\nnot \psi \to \nnot \phi),
$$
the \emph{strong} (or \emph{multiplicative})
\emph{conjunction} $*$ by
$$
\phi * \psi := \nnot (\phi \imp \nnot \psi),
$$
and we have two  equivalence connectives,
a \emph{weak} one $\leftrightarrow$
 given by
$$
\phi \leftrightarrow \psi := (\phi \to \psi) \land ( \psi \to  \phi)
$$ 
and
a \emph{strong} one 
$\Leftrightarrow$ given by 
$$
\phi \Leftrightarrow \psi := (\phi \imp \psi) \land ( \psi \imp  \phi).
$$

$\PN$ may be introduced in the standard way through a Hilbert-style calculus~\cite[p.~133]{Odin08} consisting of the following
axiom schemes, with \emph{modus ponens} as the only inference rule:



\begin{enumerate}[label=(A\arabic*)]
\item $\phi \to (\psi \to \phi)$
\item $(\phi \to (\psi \to \gamma)) \to ((\phi \to \psi) \to (\phi \to \gamma))$
\item $(\phi \land \psi) \to \phi$
\item $(\phi \land \psi) \to \psi$
\item $(\phi \to \psi) \to ((\phi \to \gamma) \to (\phi \to (\psi \land\gamma)))$
\item $\phi \to (\phi \lor \psi)$
\item $\psi \to (\phi \lor \psi)$
\item $(\phi \to \gamma) \to ((\psi \to \gamma) \to ((\phi \lor \psi) \to \gamma))$
\item $\nnot \nnot \phi \leftrightarrow \phi$
\item $\nnot (\phi \lor\psi) \leftrightarrow (\nnot \phi\land\nnot \psi)$
\item $\nnot (\phi \land\psi) \leftrightarrow (\nnot \phi\lor\nnot \psi)$
\item $\nnot (\phi \to \psi) \leftrightarrow (\phi\land\nnot \psi)$
\end{enumerate}


A Hilbert-style presentation for $\N$ 
can be obtained
from the preceding one
by adding  the single \emph{explosion}
axiom: 

\begin{enumerate}[label=(A\arabic*)]
\setcounter{enumi}{12}
\item $\phi \to (\nnot \phi \to \psi)$.
\end{enumerate}

Odintsov~\cite{Odin03} gives three
alternative  but equivalent algebra-based semantics for $\PN$,
namely
(1) Fidel-structures (or $\mathbf{F}$-structures),
(2) twist-structures 
and (3) $\PN$-lattices.
%
%
%
$\mathbf{F}$\textit{-structures} were introduced in~\cite{Fide80} as a semantics (for~$\mathbf{N3}$) alternative to twist-structures. An $\mathbf{F}$-structure $\zseq{\mathbf{A}; \zset{N_a}_{a \in A}}$ is a Heyting algebra~$\A$ structurally enriched with a special family 
of unary predicates 
distinguishing classes of counterexamples for elements of~$\A$.
Thus each 
$N_a$
 is a set of all possible negations of $a \in A$. The study of $\mathbf{F}$-structures originated with Fidel's investigations~\cite{Fide77}
  into da Costa's family $\mathbf{C}_n$ ($n \geq 1$) of 
  so-called \emph{logics of formal inconsistency}~\cite{DaCa74}, 
 and they have been mostly exploited in that context; a modern treatment of $\mathbf{F}$-structures
 for 
 $\mathbf{N3}$ and~$\mathbf{N4}$ 
 can be found in
 ~\cite{Odin08}. 
 As $\mathbf{F}$-structures and twist-structures are mutually interdefinable \cite[Chapter 8,~{\S}3]{Odin08}, we shall not deal   further  
with $\mathbf{F}$-structures here, preferring instead to work with twist-structures, which we treat in detail in the next subsection.

Before we present  Odintsov's definition of $\PN$-lattice, we need to introduce a couple of 
definitions. A \emph{De Morgan lattice} is an algebra  $\zseq{A; \land, \lor, \nnot}$ such that
$\zseq{A; \land, \lor}$ is a distributive lattice and the negation $\nnot$ satisfies the \emph{double negation law}
\[ \nnot \nnot x \approx x\] 
and the two \emph{De Morgan laws} 
\[ \nnot (x \lor y) \approx \nnot x \land \nnot y  
\qquad
    \nnot (x \land y) \approx \nnot x \lor \nnot y. \]
A De Morgan lattice whose lattice reduct is bounded is known as a \emph{De Morgan algebra}.

The following definition of $\PN$-lattices can be found in Odintsov's paper~\cite[Def.~5.1]{Odin03}.
It is clearly inspired by (and generalises) Rasiowa's
definition of $\N$-lattices~\cite[Ch.~V]{Rasi74}; see also the presentation of~\cite[p.~357--8]{Blok84}, which appears even closer to Odintsov's. 
Following standard usage, we  write $\abs{a}$ as a shorthand for $a \to a$, for every
$a \in A$. Similarly, given a term $\phi$, we let $\abs{\phi} : = \phi \to \phi$.

\begin{definition} \label{Def:N4}
An algebra $\alg{A} = \zseq{A; \land, \lor, \to, \nnot}$
of type $\zseq{2, 2, 2, 1}$ is an \emph{$\PN$-lattice} 
if:
\begin{enumerate}[label=(N4.\roman*), leftmargin=*, itemsep=2pt]
\item \label{Itm:N1} The reduct $\zseq{A; \land, \lor, \nnot}$ is a De Morgan
      lattice with lattice ordering~$\leq$.
\item \label{Itm:N2} The relation $\preceq$ on~$A$ defined for all $a, b \in A$ by
      $a \preceq b$ iff $a \to b = \abs { a \to b}$
      is a quasiorder (\ie\ is reflexive and transitive). 
\item \label{Itm:N3} The relation $\cm {:=} \preceq \cap \mathrel{(\preceq)^{-1}}$
      is a congruence on the reduct $\zseq{A; \land, \lor, \to}$
      and the quotient algebra $\zseq{A; \land, \lor, \to}/{\Xi}$ is
      a Brouwerian lattice (i.e.~the $0$-free subreduct of a Heyting algebra; see below).
\item \label{Itm:N4} For all $a, b \in A$, it holds that $\nnot (a \to b) \equiv a \land \nnot b \pmod{{\cm}}$.
\item \label{Itm:N5} For all $a, b \in A$, it holds that $a \leq b$ iff $a \preceq b$ and $\nnot b \preceq \nnot a$. 
\end{enumerate}
\end{definition}

A typical example of a $\PN$-lattice (which is not an $\N$-lattice) is the well-known  four-element lattice
$\alg{FOUR}$,
whose Hasse diagram is
shown in Figure~\ref{fig:2}, which provides a semantical basis for the Belnap-Dunn ``useful four-valued logic''~\cite{Beln77}.
The behaviour of the (non-lattice) algebraic operations on $\alg{FOUR}$ (as well as the names of its elements)
is determined by the twist construction that we shall introduce in a moment
(see Example~\ref{Exa:Four}
). Indeed, 
the twist construction
is an easy means of producing further examples of ``proper''  $\PN$-lattices 
(\emph{e.g}.~by taking the factor lattice to be a Brouwerian lattice that is not a Heyting algebra).

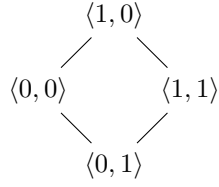
\begin{figure}[h]
\caption{The Belnap lattice $\alg{FOUR}$.}
\label{fig:2}

\bigskip

\begin{center}
\begin{tikzpicture}[scale=1,
  dot/.style={circle,fill,inner sep=1.6pt,outer sep=2pt}]
  \node (K6b) at (0,0) {$\la 0, 1 \ra $};
  \node (K6d) at (1,1) {$\la 1, 1 \ra $};
    \node (K6g) at (0,2) {$\la 1, 0 \ra $};
            \node (K6i) at (-1,1) {$\la 0, 0\ra $};
    \draw[-] (K6d) -- (K6g);
    \draw[-] 
    (K6b) -- (K6d);
    \draw[-] (K6b) -- (K6i) -- (K6g);
\end{tikzpicture}
\end{center}
\end{figure}



In this setting,  $\N$-lattices may be defined as precisely those $\PN$-lattices that further satisfy 
any of the equivalent items listed in Proposition~\ref{Prop:NelWeakening}  below
(
we write $x \pr y$ as an abbreviation of the identity $x \to y \approx \abs{ x \to y }$).

\begin{proposition} \label{Prop:NelWeakening}
For every  $\PN$-lattice $\alg{A}$, 
the following are equivalent:
\begin{enumerate}[label={\rm (\alph*)}]
\item \label{Itm:N4-1} $\alg{A}$ is an $\N$-lattice. 
\item \label{Itm:N4-2} The unary term $
\abs{x}$ is constant over $\alg{A}$.
\item \label{Itm:N4-3} The unary term $x \imp x$ is constant over $\alg{A}$.
\item \label{Itm:N4-4} $\alg{A} \models x \land \nnot x \pr y$. 
\item \label{Itm:N4-5} $\alg{A} \models x \leq y \imp x$.
\item \label{Itm:N4-6} $\alg{A} \models x * y \leq x$, where
$x *y := \nnot (x \imp \nnot y)$.
\end{enumerate}
\end{proposition}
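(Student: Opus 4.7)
The plan is to prove the six-way equivalence via the cycle $(a) \Rightarrow (b) \Rightarrow (d) \Rightarrow (a)$ supplemented by $(b) \Leftrightarrow (c)$ and $(d) \Leftrightarrow (e) \Leftrightarrow (f)$, reasoning principally in the Brouwerian quotient $\alg{A}/\Xi$ supplied by (N4.iii) and using the further axioms (N4.iv) and (N4.v). The implication $(a) \Rightarrow (b)$ is immediate: an $\N$-lattice, viewed in the CIBRL signature, has a constant top $1$ that interprets the weak-implication theorem $x \to x$, so $|x| = 1$ is constant. For $(b) \Leftrightarrow (c)$, expand $x \imp x = (x \to x) \land (\nnot x \to \nnot x) = |x| \land |\nnot x|$; using the twist-structure representation of $\PN$-lattices, both $|x|$ and $|\nnot x|$ correspond to the pair $\la 1, a_1 \land a_2 \ra$ when $x \mapsto \la a_1, a_2 \ra$, so $|x| = |\nnot x|$ and thus $x \imp x = |x|$, giving the equivalence.

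For $(b) \Rightarrow (d)$, the idea is to upgrade the constancy of $|x|$ to $\preceq$-minimality of $\nnot c$, where $c$ denotes the common value of $|x|$. By (N4.iv), $\nnot c = \nnot |x| \equiv x \land \nnot x \pmod{\Xi}$ for every $x$. Since $a \land \nnot a \leq a$ and (N4.v) gives $a \land \nnot a \preceq a$, the $\Xi$-equivalence yields $\nnot c \preceq a$ for every $a \in A$; hence $\nnot c \to b$ lies in the top $\Xi$-class for every $b$, and $\Xi$-invariance transfers this to $(x \land \nnot x) \to b$, which is (d). Conversely, $(d) \Rightarrow (a)$ is the algebraic form of the explosion axiom (A13): the exponential law in the Brouwerian quotient yields $[a \to (\nnot a \to b)] = [(a \land \nnot a) \to b]$, so (A13) is valid in $\alg{A}$ iff $(a \land \nnot a) \to b$ always lies in the top $\Xi$-class, which is precisely (d).

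Finally, for $(d) \Leftrightarrow (e) \Leftrightarrow (f)$, I would unfold $y \imp x = (y \to x) \land (\nnot x \to \nnot y)$ and $x * y = \nnot (x \imp \nnot y)$, and split the target inequalities via (N4.v). Applying (N4.iv) together with De~Morgan inside the quotient shows that each of $x \leq y \imp x$ and $x * y \leq x$ reduces to the single nontrivial requirement $x \land \nnot x \preceq \nnot y$ for all $x, y$; because $\nnot$ is a bijection on $\alg{A}$, this is the same as the $\preceq$-minimality of $x \land \nnot x$, i.e., condition (d). The main obstacle is the step $(b) \Rightarrow (d)$, which demands a careful synthesis of (N4.iv), (N4.v) and the $\Xi$-invariance of the top class; the identity $|x| = |\nnot x|$ invoked in $(b) \Leftrightarrow (c)$ is another delicate point, best handled via the twist representation rather than by direct axiomatic calculation.
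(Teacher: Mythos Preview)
The paper does not supply a proof of this proposition; after stating it, the text only remarks that every $\PN$-lattice satisfies the identity $\abs{x} \approx x \imp x$, which handles (b)$\Leftrightarrow$(c), and leaves the remaining equivalences implicit. Your argument correctly fills in these details, and the overall scheme (cycle through (a), (b), (d) and reduce (e), (f) to (d)) is sound.

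Two small points of polish. First, for (b)$\Leftrightarrow$(c) you invoke the twist representation, which in the paper appears only later (Theorem~\ref{Prop:OdRep}); since the paper simply asserts $\abs{x} \approx x \imp x$ as a known identity of $\PN$-lattices, you may cite it directly and avoid the forward reference. Second, in the step (d)$\Rightarrow$(a) you speak of the ``exponential law in the Brouwerian quotient'', but in fact $(x \land y) \to z \approx x \to (y \to z)$ is the equational axiom~\ref{Itm:PN2eq}, so $(x \land \nnot x) \to y = x \to (\nnot x \to y)$ holds in $\alg{A}$ on the nose, not merely modulo $\Xi$. This makes the equivalence of (d) with validity of the explosion axiom (A13)---and hence with (a), since $\N$ is introduced as $\PN$ plus (A13)---immediate. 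Your reductions of (e) and (f) to (d) are correct: after discharging the automatic half via~\ref{Itm:PN5eq}, both unwind through (N4.iv) and (N4.v) to the single condition $x \land \nnot x \preceq \nnot y$, which is (d) by bijectivity of~$\nnot$.
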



In the standard 
 terminology on substructural logics,
the last item of Proposition~\ref{Prop:NelWeakening} says
that~$\mathbf{N3}$
is the axiomatic extension of~$\mathbf{N4}$ by \emph{weakening}.
We note that every $\PN$-lattice satisfies the identity $\abs{x} \approx x \imp x$, which explains the equivalence
between the second and third item.
In an $\N$-lattice, the term
$\abs{x}$ (or, equivalently, $x \imp x$)
is interpreted
as the greatest element of the lattice reduct of $\A$, and
$\nnot \abs{x}$
as the least element. No constant term
is definable, in general, over $\PN$-lattices. 
On the other hand, as witnessed by $\alg{FOUR}$, only requiring the lattice reduct of an $\PN$-lattice $\A$
to be bounded  is not sufficient to  ensure that $\A$ is
an $\N$-lattice (more on this below).

As in the case of Rasiowa's definition of $\N$-lattices, it is not immediately
evident that the above-defined class of algebras can  be axiomatised
by means of identities only; this is a further result
established by Odintsov~\cite[Thm.~6.3]{Odin03}
exploiting Pynko's insights
on expansions of the Belnap-Dunn logic~\cite{Pynk99}.

 As a variety of algebras in the language $\{ \land, \lor, \to, \nnot\}$, the class of $\PN$-lattices can be presented through the following identities~\cite[Def.~6.1]{Odin03}.
 

\begin{enumerate}[label=(N4.\alph*), leftmargin=*, itemsep=2pt]
\item \label{Itm:PN1eq} $\abs{x}  \to y \approx y $
 \item \label{Itm:PN2eq} $(x \land y) \to z \approx x \to (y \to z)$
\item \label{Itm:PN3eq} $x \to (y \land z) \approx (x \to y) \land (x \to z)$
\item \label{Itm:PN4eq}  $(x \lor y) \to z \approx (x \to z) \land (y \to z) $
\item \label{Itm:PN5eq}  $x \leq y \to x$
\item \label{Itm:PN6eq} $(x \to y) \land (y \to z) \pr x \to z$
\item \label{Itm:PN7eq} $ \abs{x} \leq \nnot (x \to y)  \to x$
\item \label{Itm:PN8eq} $ \abs{x} \leq y \to  (\nnot x \to \nnot (y \to x ) ) $
\item \label{Itm:PN9eq} $ \abs{x} \leq \nnot (y \to  x) \to \nnot x  $
\item \label{Itm:PN10eq} $ x \land (x \to y) \leq y \lor  \nnot (\nnot y \to  \nnot x) $
\item \label{Itm:PN11eq} $ x \leq \abs{y} \lor  \nnot (\nnot y \to  \nnot x) $.
\end{enumerate}

While some of the  above identities 
are somewhat opaque (even to a specialist),
no essentially more transparent axiomatisation seems to be currently available -- by contrast, for $\N$-lattices we have
the elegant equational axiomatisation due to Brignole and Monteiro~\cite{Brig69}. 


Odintsov  proved that $\PN$ is strongly complete with respect to the class of $\PN$-lattices~\cite[Thm.~5.5]{Odin03}.
His results actually entail that $\PN$ 
is \emph{regularly algebraisable} in the sense of~\cite{Blok89}, and has the variety of $\PN$-lattices as its equivalent algebraic
semantics~\cite[Thm.~2.6]{Rivi11}; one may take 
 $\{ \phi \Leftrightarrow \psi \}$
as a system of equivalence formulas 
and 
$\{ x \approx \abs{x} \}$
as a system of defining equations witnessing algebraisability\footnote{Other connectives might be used here, suggesting a number of 
fragments of $\PN$ that are also algebraisable (e.g.~the $\{ \nnot, \to \}$-fragment, the $\{ \imp \}$-fragment and so on).
A study of these fragments has not been attempted yet,
whereas some work has been recently done on fragments of constructive logic with strong negation:
see~\cite{thiago2021negation,riviecciotwonegs,rivieccio2021fragments,rivieccio2021quasi}.
 }. 
 
 
Algebraisability of $\PN$ w.r.t.~the variety of $\PN$-lattices
 immediately entails that the lattice of axiomatic extensions of $\PN$
 is dually isomorphic to the lattice of subvarieties of $\PN$-lattices~\cite[Thm.~6.10]{Odin03};
by the same token, a more general isomorphism holds between
 the lattice of all \emph{finitary} extensions of $\PN$ and the lattice of
 subquasivarieties of $\PN$-lattices \cite[Lecture~6;~Thm.~1.4]{Blok99b},~\cite[Thms.~3.13--3.15]{Pynk99b}. 
This connection is further exploited
 and investigated in~\cite{Odin05,Odin06}. 

As mentioned earlier, an $\PN$-lattice may be bounded without necessarily being an $\N$-lattice.
This suggests that by adding a \emph{falsum} constant $\bot$ to $\PN$ one obtains
a logic $\PN$$_\bot$ --
having  the variety of bounded $\PN$-lattices as equivalent algebraic semantics --
which is still distinct from $\N$. 
In $\PN$$_\bot$ the presence of the propositional constant
$\bot$ allows one to introduce a second (so-called \emph{intuitionistic}) negation $\neg$
given by
$\neg \phi : = \phi \to \bot$ (but notice that $\nnot$ is not \emph{strong} with respect to 
$\neg$ in the same sense as it is in $\N$: see~\cite[p.~4]{Odin08}).
One can then show that 
$\PN$$_\bot$ is a conservative expansion of full intuitionistic logic~\cite[Cor.~8.6.6]{Odin08}.
The variety of bounded $\PN$-lattices is better-behaved, from an algebraic point of view, than the class of all
$\PN$-lattices: it is indeed in the bounded setting that one can  exploit at its best the connection between 
$\PN$-lattices and Heyting algebras given by the twist-structure representation. Before going
into further detail on these results, let us
 introduce formally the definition of twist-structure
and  
the corresponding representation theorem.


\subsection{Twist-structures for $\PN$-lattices}
\label{ss:tw}
 
The twist-structure is a straightforward  yet powerful algebraic
construction used to represent $\N$- and $\PN$-lattices as special 
direct squares of (generalised) Heyting algebras.  For $\N$-lattices, twist-structures
were first introduced by Vakarelov~\cite{Vaka77} 
and independently by Fidel~\cite{Fide78}, but a full
representation -- in fact, a categorical equivalence --
 was only obtained by
Sendlewski~\cite{Send90};
the algebraic content of the latter paper was extended to $\PN$-lattices by Odintsov~\cite{Odin04}.
In more recent years, the  twist construction has
 been employed to obtain similar representations
for a wide family of related algebras: see~\cite{OnRi14,
Rivi11,riviecciotwonegs,Ri20,Rivi0xa,WNM,rivieccio2017four,
rivieccio2019quasi,riviecciosp2021quasi} 
as well as  the works cited in the next subsection.

Recall that a \emph{Brouwerian lattice} (cf.~item~\ref{Itm:N3} of Definition~\ref{Def:N4})
 is an algebra
$  \Ll[B] = \la B ; 
\wedge ,\vee , \to  \ra$ such that
$  \la B ; 
\wedge ,\vee   \ra$ is a lattice with an order $\leq$
and $\to$ is the residuum of $\land$, that is,
$a \land b \leq c $ iff $a \leq b \to c$, for all $a,b, c \in B$.
The latter property entails that the lattice  $  \la B ; 
\wedge ,\vee   \ra$ is distributive and has a top element $1$ (while the bottom element may not exist), so $1$ can
be safely added to the algebraic language as a primitive nullary operation.

On every Brouwerian lattice $ \A[B] = \la B ; 
\wedge ,\vee , \to  \ra$ one can define  the 
set of \emph{dense elements}
$D(\A[B]) := \{ a \lor (a \to b) : a,b \in B\}$, which is a lattice filter
of $\A[B]$. Accordingly, a lattice  $F \subseteq B$ 
such that $D(\A[B]) \subseteq F $
is said to be
\emph{dense}  (or \emph{Boolean}). 
As is well known,
every filter $F \subseteq B$ is the congruence kernel of a congruence on $\A[B]$,
and it is easy to see that a filter $F$ is dense if and only if the associated 
congruence $\theta_F$ 
yields a generalised Boolean algebra $\A[B]/ \theta_F$ as a quotient
(generalised Boolean algebras are precisely the zero-free
subreducts of Boolean algebras: see e.g.~\cite{gratzer1958generalized}).

\begin{definition}
\label{Def:twi}
Let $ \A[B] = \la B ; 
\wedge ,\vee , \to  \ra$
be a Brouwerian lattice, let
$F \subseteq B$ be a dense filter
and $I \subseteq B$ an arbitrary lattice ideal. 
The \emph{$\PN$-twist-structure}
$$
Tw( \A[B],  F, I ) =  \la A ; 
\land, \vee, 
\to,
 \nnot 
 \ra$$
 is the algebra with universe
 $$
 A :=  \{ \la a, b \ra \in B \times B : a \lor b \in F, \, a \land b \in I \}
 $$
and operations defined as follows:
for all $\la a,b \ra , \la c , d \ra \in B \times B$,
\begin{align*}
\nnot \la a , b  \ra & := \la b ,a  \ra, \\
\la a  , b  \ra \land \la c , d   \ra  &  := \la a  \land c  , b  \lor d  \ra, \\
\la a  , b  \ra \lor \la c , d   \ra  & := \la a  \lor c  , b  \land  d \ra, \\
\la a  , b  \ra \to \la c , d   \ra  & := \la a  \to c  , a  \land d  \ra. 
\end{align*}
\end{definition}

As the reader may have noticed,  following standard usage in the literature, in Definition~\ref{Def:twi} we are overloading the symbols
$\land, \lor$ and $\to$ to denote operations on the Brouwerian lattice $ \A[B] $ as well as on the twist-structure $Tw( \A[B],  F, I )$.
Every $\PN$-twist-structure is easily seen to be an $\PN$-lattice; moreover, 
Odintsov's representation result  states that
every $\PN$-lattice can be constructed in this way.

In this representation, $\N$-lattices correspond precisely to the twist-structures
$Tw( \A[B],  F, I )$ such that $\A[B]$ is a  Heyting algebra (with least element $0$)
and $I = \{ 0 \}$. 

\begin{example} \label{Exa:Four}
Let $ \A[B]_2 = \la B_2= \{0,1\} ; \wedge ,\vee , \to  \ra$ be the two-element Boolean algebra
viewed as a Brouwerian lattice.
Taking $F= I = B_2$, we have that 
$Tw( \A[B]_2,  B_2, B_2 )$ is the four-element Belnap
lattice of Figure~\ref{fig:2}.
Note that $
Tw( \A[B]_2,  B_2, B_2 )$ 
is not an $\N$-lattice because it does not satisfy
$\abs{x} \approx \abs{y}$ (cf.~Proposition~\ref{Prop:NelWeakening}). Indeed,
we have for instance
$\abs{\la 1, 1 \ra} = \la 1, 1 \ra \neq \la 1, 0 \ra = \abs{\la 1, 0 \ra} $.
\end{example}

The converse and more interesting direction of Odintsov's representation
works as follows. Given an $\PN$-lattice $\A$, one 
considers the quotient $B(\A) = \zseq{A; \land, \lor, \to}/{\Xi}$ defined in the preceding subsection, which is guaranteed
to be a Brouwerian lattice by item~\ref{Itm:N3} of Definition~\ref{Def:N4}, together with the sets
$A^+ : = \{ a \lor \nnot a : a \in A \}$
and
$A^-  : = \{ a \land \nnot a : a \in A \}$.
Denoting  the canonical epimorphism by $\pi \colon A  \to A /{\Xi}$ and defining
$
F(\A) : = \pi [A^+]
$
and
$
I(\A) : = \pi [A^-],
$
one has that $F(\A) \subseteq B(\A)$ is a dense filter 
and $I(\A) \subseteq B(\A) $ is a lattice ideal. 
This yields the following. 
  
\begin{theorem} \label{Prop:OdRep}
Every $\PN$-lattice $\alg{A}$
is isomorphic to the $\PN$-twist-structure 
\[Tw( B(\A),  F(\A), I(\A) )\] 
via the map $a \mapsto ( a /{\Xi}, \, {\nnot a} /{\Xi})$ 
for all $a \in A$.
\end{theorem}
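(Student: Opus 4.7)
The plan is to verify that the map $\varphi \colon A \to Tw(B(\A), F(\A), I(\A))$ defined by $\varphi(a) := \la a/{\Xi},\, \nnot a/{\Xi} \ra$ is an isomorphism of $\PN$-lattices, which naturally splits into four tasks: well-definedness (landing in the twist-structure), the homomorphism property, injectivity, and surjectivity.

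\emph{Well-definedness and homomorphism.} First, $a/{\Xi} \vee \nnot a/{\Xi} = \pi(a \vee \nnot a) \in \pi[A^+] = F(\A)$ by the very definition of $A^+$, and dually $a/{\Xi} \wedge \nnot a/{\Xi} \in I(\A)$, so $\varphi(a)$ satisfies the twist-pair constraints. For the homomorphism property, the clause for $\nnot$ invokes the double negation law from item~\ref{Itm:N1}; the clauses for $\wedge$ and $\vee$ combine the De Morgan laws (again item~\ref{Itm:N1}) with the fact that $\cm$ is a congruence on the $\{\wedge,\vee,\to\}$-reduct (item~\ref{Itm:N3}). For $\to$, the crucial ingredient is item~\ref{Itm:N4}: it yields $\pi(\nnot(a \to b)) = \pi(a \wedge \nnot b) = a/{\Xi} \wedge \nnot b/{\Xi}$, which matches the second component of the twist-implication, while compatibility of $\to$ with $\cm$ handles the first component.

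\emph{Injectivity.} If $\varphi(a) = \varphi(b)$, then $a \cm b$ and $\nnot a \cm \nnot b$, so in particular $a \pr b$, $b \pr a$, $\nnot a \pr \nnot b$ and $\nnot b \pr \nnot a$. Item~\ref{Itm:N5} then yields $a \leq b$ and $b \leq a$, hence $a = b$.

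\emph{Surjectivity: the main obstacle.} Given $\la x, y \ra \in Tw(B(\A), F(\A), I(\A))$ and any representatives $x = \pi(u)$, $y = \pi(v)$, the twist conditions supply elements $c, d \in A$ with $u \vee v \cm c \vee \nnot c$ and $u \wedge v \cm d \wedge \nnot d$. The task is to construct some $a \in A$ such that $a \cm u$ and $\nnot a \cm v$. In the $\N$-lattice situation, where $c \vee \nnot c$ and $d \wedge \nnot d$ collapse to the top and bottom of $B(\A)$, the natural candidate $a := u \wedge \nnot v$ (with $\nnot a = \nnot u \vee v$) works directly. In the paraconsistent setting the situation is more delicate: this candidate must be adjusted using the witnesses $c$ and $d$ so that the De Morgan negation still returns the prescribed class $v/{\Xi}$. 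Verifying that such an adjusted element simultaneously satisfies $a \cm u$ and $\nnot a \cm v$ is the most technical step and requires a careful manipulation of items~\ref{Itm:N1}--\ref{Itm:N5}, especially the interaction between the weak implication and the De Morgan negation captured by item~\ref{Itm:N4}. This is where Odintsov's original argument concentrates its main effort.
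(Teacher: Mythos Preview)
The paper is a survey and does not include a proof of this theorem; it simply attributes the result to Odintsov~\cite{Odin04}. So there is no ``paper's own proof'' to compare against, only the standard argument from the literature.

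Your treatment of well-definedness, the homomorphism property, and injectivity is correct and is exactly the standard route: the De~Morgan laws from~\ref{Itm:N1} handle $\land$, $\lor$, $\nnot$; item~\ref{Itm:N4} handles the second coordinate of the implication; and item~\ref{Itm:N5} gives injectivity. Nothing to add there.

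The gap is surjectivity. You correctly locate the difficulty and even identify that the naive $\N$-lattice candidate $a := u \wedge \nnot v$ does not carry over, but you never actually produce the ``adjusted element''. Saying that ``this is where Odintsov's original argument concentrates its main effort'' is a description of where the work lies, not a proof. As written, the proposal is a sketch with the crucial step deferred. To close the gap you must exhibit, for an arbitrary pair $\la x, y \ra$ satisfying the twist constraints, a concrete $a \in A$ (built from representatives $u, v$ and the witnesses $c, d$ for the filter and ideal conditions) and then verify $a \cm u$ and $\nnot a \cm v$ using~\ref{Itm:N1}--\ref{Itm:N5}. Without that construction the argument is incomplete.
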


The object-level correspondence  
established by Theorem~\ref{Prop:OdRep}
can  be easily extended to an equivalence between
the algebraic category of $\PN$-lattices (with algebraic homomorphisms as morphisms) 
and a category having as objects
triples of type $( \A[B],  F, I )$
and, as morphisms, the
Brouwerian lattice homomorphisms that respect
$F$ and $I$; this equivalence was first established (and exploited) in~\cite{JaRi14}, 
to which we refer for further details. 
This suggests that, from the point of view of (e.g.~Priestley-style) duality,
one may study $\PN$-lattices \emph{qua} triples $( \A[B],  F, I )$,
 relying on  the well-known  results of Esakia duality 
for (generalised) Heyting algebras. 

Thanks to Theorem~\ref{Prop:OdRep},
Odintsov~\cite[Sec.~4]{Odin04} was able to characterise 
 $\PN$-lattice homomorphisms  in terms of the homomorphisms 
between the corresponding Brouwerian lattices; the following 
description of congruences -- which can also be obtained as a corollary of the categorical equivalence --  is  particularly useful.

\begin{theorem} \label{Prop:OdCon}
The congruence lattice of 
every $\PN$-lattice $\alg{A}$
is isomorphic to the congruence lattice of the underlying Brouwerian lattice
$B(\A)$.
\end{theorem}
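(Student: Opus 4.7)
The plan is to use Theorem~\ref{Prop:OdRep} to identify $\A$ with the twist-structure $Tw(B, F, I)$, where $B := B(\A)$, $F := F(\A)$ and $I := I(\A)$, and then to exhibit mutually inverse order-preserving maps $\Phi \colon \Co(B) \to \Co(\A)$ and $\Psi \colon \Co(\A) \to \Co(B)$. For $\Phi$: given $\theta \in \Co(B)$, set $\la a, b \ra \, \tilde\theta \, \la c, d \ra$ iff $a \, \theta \, c$ and $b \, \theta \, d$. Since every twist operation is defined componentwise (modulo the swap performed by $\nnot$), it is routine to verify that $\tilde\theta$ is a $\PN$-lattice congruence on $\A$. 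For $\Psi$: given $\Theta \in \Co(\A)$, define $\bar\Theta$ on $B$ by $a \, \bar\Theta \, c$ iff $\la a, b \ra \, \Theta \, \la c, d \ra$ for some pairs $\la a, b \ra, \la c, d \ra \in Tw(B, F, I)$; such witnessing pairs exist because the first-coordinate projection $Tw \to B$ is surjective.

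The identity $\Psi \circ \Phi = \mathrm{id}_{\Co(B)}$ reduces to showing that whenever $a \, \theta \, c$ in $B$ one can choose $b, d$ with $\la a, b \ra, \la c, d \ra \in Tw$ and $b \, \theta \, d$; this can be arranged by selecting second coordinates provided by the isomorphism $\A \cong Tw(B, F, I)$ and, if necessary, combining them via twist meets and joins. The converse $\Phi \circ \Psi = \mathrm{id}_{\Co(\A)}$ is equivalent to the claim that $\la a, b \ra \, \Theta \, \la c, d \ra$ holds iff both $a \, \bar\Theta \, c$ and $b \, \bar\Theta \, d$. The forward direction exploits that $\nnot$ swaps components: from $\la a, b \ra \, \Theta \, \la c, d \ra$ one also obtains $\la b, a \ra \, \Theta \, \la d, c \ra$, so that both coordinate projections deliver the required relations.

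The main obstacle is the backward direction of the above equivalence: separate witnesses $\la a, b_0 \ra \, \Theta \, \la c, d_0 \ra$ and $\la a', b \ra \, \Theta \, \la c', d \ra$ (realising $a \, \bar\Theta \, c$ and $b \, \bar\Theta \, d$) must be aligned to produce a single pair relation $\la a, b \ra \, \Theta \, \la c, d \ra$. Rather than pushing through such a combinatorial argument using joins, meets, weak implication, and $\nnot$, the cleanest route is to invoke the categorical equivalence mentioned following Theorem~\ref{Prop:OdRep}. Under this equivalence, quotients of $\A$ correspond to quotients of the triple $(B, F, I)$, and a quotient of such a triple amounts to a surjective Brouwerian-lattice homomorphism $q \colon B \twoheadrightarrow B/\theta$ (with $q[F]$ automatically a dense filter and $q[I]$ automatically a lattice ideal of $B/\theta$). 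Hence congruences of $\A$ correspond bijectively to congruences of $B$, and one verifies that this bijection agrees with $\Phi, \Psi$ above and preserves arbitrary meets and joins, yielding the desired lattice isomorphism.
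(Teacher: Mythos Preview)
The paper does not supply its own proof: it attributes the result to Odintsov~\cite[Sec.~4]{Odin04} (via an analysis of $\PN$-lattice homomorphisms through the twist representation) and remarks that it ``can also be obtained as a corollary of the categorical equivalence'' with $\Tw$. Your strategy of passing through the categorical equivalence is therefore one of the routes the paper itself indicates.

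Two steps in your argument are not justified, however. First, you define $a\,\bar\Theta\,c$ by an existential condition but never verify that $\bar\Theta$ is a congruence on $B$. Transitivity is not automatic: from $\la a,b_1\ra\,\Theta\,\la c,d_1\ra$ and $\la c,d_2\ra\,\Theta\,\la e,f\ra$ (with $d_1\neq d_2$ in general) one must manufacture a single witness for $a\,\bar\Theta\,e$, which is exactly the alignment problem you later flag for $\Phi\circ\Psi$. A cleaner choice is $\Psi(\Theta):=\ker B(q_\Theta)$, where $q_\Theta\colon\A\to\A/\Theta$ is the quotient map and $B(\cdot)$ the passage to the underlying Brouwerian lattice; this is manifestly a congruence.

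Second, the categorical shortcut is incomplete. You assert that a quotient of $(B,F,I)$ in $\Tw$ ``amounts to'' a Brouwerian quotient $q\colon B\to B/\theta$, but a $\Tw$-morphism into $(B',F',I')$ only requires $q[F]\subseteq F'$ and $q[I]\subseteq I'$, so different choices of $F',I'$ above $q[F],q[I]$ could \emph{a priori} give non-isomorphic quotient objects sharing the same Brouwerian kernel. The missing observation is that the quotients relevant here are the \emph{regular} epimorphisms in $\Tw$, and for those one necessarily has $F'=q[F]$ and $I'=q[I]$: if $g\colon\A_1\to\A_2$ is a surjective $\PN$-homomorphism, then $F(\A_2)=\pi\big[\{g(a)\lor{\nnot}g(a)\mid a\in A_1\}\big]=T(g)\big[F(\A_1)\big]$, and likewise for $I$. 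Only with this in hand do isomorphism classes of regular epis from $(B,F,I)$ correspond bijectively to $\Co(B)$.
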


Among other things, Theorem~\ref{Prop:OdCon} entails that an $\PN$-lattice $\alg{A}$ is subdirectly irreducible
if and only if $B(\A)$ is a subdirectly irreducible Brouwerian lattice; this result 
 is obviously quite helpful in the study of subvarieties of $\PN$-lattices.
Furthermore, in view of the observation that
$\PN$-lattices are congruence-permutable~\cite[Thm.~4.24]{Spin18}, it is also an easy observation that
an $\PN$-lattice $\alg{A}$ is directly indecomposable
if and only if the Brouwerian lattice $B(\A)$ is directly indecomposable.

Theorem~\ref{Prop:OdRep} is  directly useful in the study of subvarieties of $\PN$-lattices as well,
because  equational conditions (say, on an $\PN$-lattice $\A$) may correspond to 
conditions on the Brouwerian lattice $B(\A)$ or on 
the subsets
$F(\A)$ and $ I(\A)$.
For instance, as mentioned earlier,
an $\PN$-lattice $\A$ is an $\N$-lattice
(i.e.~satisfies any of the items of Proposition~\ref{Prop:NelWeakening})
if and only if 
$ I(\A)$ is the singleton ideal (that is, iff $B(\A)$ has a least element $0$ and $ I(\A) = \{ 0 \}$).


We conclude the subsection with the anticipated overview on Odintsov's
results on the lattice of extensions of  $\PN_\bot$, and in particular on the connections with extensions 
of intuitionistic logic. 

As mentioned earlier, paraconsistent constructive logic with strong negation $\PN$
may be viewed as a (conservative) expansion of positive intuitionistic logic; more precisely,
the $\{ \land, \lor, \to \}$-fragments of both logics coincide -- the connectives  here being interpreted as suggested by the notation, in particular,
$\to$ is the weak implication on $\PN$. A similar result holds for the expansion of  $\PN$ with truth constants:
the $\{ \land, \lor, \to, \neg \}$-fragment of $\PN_\bot$ (the connective $\neg$ being defined by $\neg \phi := \phi \to \bot$)
coincides with full intuitionistic logic~\cite[Cor.~8.6.6]{Odin08}.
These observations suggest that one can define a mapping $\sigma$  associating, to a given a logic $\logic{L}$ extending
$\PN_\bot$, the $\{ \land, \lor, \to, \neg \}$-fragment of $\logic{L}$, denoted  $\sigma(\logic{L})$, which is a super-intuitionistic logic (i.e.~a strengthening of intuitionistic logic).

Given a bounded $\PN$-lattice $\A \in \PN_\bot$, we
denote by $H(\A):= \zseq{A; \land, \lor, \to, \bot}/{\Xi}$ the Heyting algebra quotient
obtained as per Definition~\ref{Def:N4}.
Similarly,
given a class of $\PN_\bot$-lattices $\mathsf{K}$, let 
$H(\mathsf{K}) : = \{ H(\A) : \A \in \mathsf{K} \}$.
%
Recall that, in the setting of finitary extensions of a  finitary base logic 
(in our case, $\PN_\bot$) which is algebraisable with respect to a given (quasi)variety 
(in our case, the variety of $\PN_\bot$-lattices),
 one can associate to every subclass  $\mathsf{K}$ ($ \subseteq \PN_\bot$)
 the (algebraisable) logic $ \logic{L} (\mathsf{K})$ determined by the quasivariety
 generated by $\mathsf{K}$. 

\begin{proposition}\cite[Lemma~10.1.2,~Prop.~10.1.1]{Odin08}
\label{Prop:OdExt}
\
\begin{enumerate}[label={\rm (\alph*)}]
\item	For every  $\A \in \PN_\bot$ and every formula $\phi $ in the language of intuitionistic logic, we have
$\A \models \phi \approx \abs{\phi}$ if and only if $H(\A) \models \phi \approx 1 
$.
\item	If 
$ \logic{L} (\mathsf{K}) $
is
a logic extending $\PN_\bot$ 
determined by  a class $\mathsf{K}$ of $\PN_\bot$-lattices,
then $\sigma(\logic{L} (\mathsf{K})) = \logic{L}(H(\mathsf{K}))$.
\end{enumerate}
\end{proposition}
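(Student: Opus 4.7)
My plan is to lift everything through the canonical projection $\pi \colon \A \to H(\A)$, exploiting the twist-structure representation of Theorem~\ref{Prop:OdRep}. The pivotal observation is the following element-wise bi-implication: for every $a \in A$,
\[
a = \abs{a} \iff \pi(a) = 1 \text{ in } H(\A).
\]
Indeed, under the twist identification $a \leftrightarrow (a/{\Xi},\, {\nnot a}/{\Xi})$, the definition of $\to$ from Definition~\ref{Def:twi} yields $\abs{(x,y)} = (x \to x,\, x \land y) = (1,\, x \land y)$, so $(x,y) = \abs{(x,y)}$ holds precisely when $x = 1$.

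For part~(a), I would next check that $\pi$ is a surjective homomorphism with respect to all operations of the intuitionistic language $\{\land, \lor, \to, \bot\}$ -- and hence also to $\neg \phi := \phi \to \bot$ -- which follows from item~\ref{Itm:N3} of Definition~\ref{Def:N4} together with $\pi(\bot) = 0$. A routine induction on formula complexity then shows $\pi(v(\phi)) = (\pi \circ v)(\phi)$ for every valuation $v$ on $\A$ and every intuitionistic-language formula $\phi$. Since $\pi$ is surjective, every valuation on $H(\A)$ lifts to such a $v$; combining this with the element-wise observation yields
\[
\A \models \phi \approx \abs{\phi} \iff \forall v\colon \pi(v(\phi)) = 1 \iff \forall \bar v\colon \bar v(\phi) = 1 \iff H(\A) \models \phi \approx 1.
\]

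For part~(b), I would invoke algebraizability: $\PN_\bot$ has $\{x \approx \abs{x}\}$ as system of defining equations, and intuitionistic logic has $\{x \approx 1\}$. Hence a formula $\phi$ in the intuitionistic language is a theorem of $\logic{L}(\mathsf{K})$ iff $\mathsf{K} \models \phi \approx \abs{\phi}$, which by part~(a) is equivalent to $H(\mathsf{K}) \models \phi \approx 1$, i.e., $\phi$ is a theorem of $\logic{L}(H(\mathsf{K}))$. To promote this from theoremhood to the full finitary consequence relation I would apply the same lifting/projection argument to quasi-equations: a single $\A$-valuation simultaneously witnesses or refutes all premises of a rule $\Gamma \vdash \phi$, and the element-wise bi-implication transfers validity in $\mathsf{K}$ to validity in $H(\mathsf{K})$ and back. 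The main technical point I expect to verify carefully is precisely this uniform lifting of valuations, together with the fact that $H(\cdot)$ respects the quasivariety operator implicit in $\logic{L}(-)$; both reduce to the surjectivity of $\pi$ and the pointwise correspondence above.
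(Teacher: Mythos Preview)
The paper does not supply a proof of this proposition; it is stated with a citation to Odintsov~\cite[Lemma~10.1.2,~Prop.~10.1.1]{Odin08} and used as a black box. There is therefore nothing in the paper to compare your argument against.

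That said, your approach is sound and is the natural one. The element-wise bi-implication $a = \abs{a} \iff \pi(a) = 1$, verified via the twist representation, is exactly the right pivot for part~(a); the remaining steps (that $\pi$ is a surjective $\{\land,\lor,\to,\bot\}$-homomorphism by item~\ref{Itm:N3} of Definition~\ref{Def:N4}, and the induction on formula complexity) are routine. For part~(b), your reduction to part~(a) via the defining equations of algebraizability is correct. One minor remark: in the paper's setting, $\sigma(\logic{L})$ is the $\{\land,\lor,\to,\neg\}$-fragment viewed as a set of theorems (super-intuitionistic logics being identified with their theorem sets, since the deduction theorem holds), so your theorem-level argument already establishes part~(b); the additional paragraph extending the argument to the full consequence relation is not needed, though it does no harm.
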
 

Conversely, given a super-intuitionistic logic $\logic{J}$, one may consider the family
$\sigma^{-1}(\logic{J})$ of  super-$\PN_\bot$-logics that are conservative expansions  of $\logic{J}$. 
This class of logics forms an interval 
$[ \eta(\logic{J}), \eta^{\circ}(\logic{J})]$
in the lattice of $\PN_\bot$-extensions whose endpoints 
are the logics 
defined as follows: $\eta(\logic{J})$
is the expansion of $\logic{J}$ obtained by enlarging the language of $\logic{J}$ with  the strong negation connective, axiomatised by axioms 
(A9)--(A12) from our  Hilbert-style presentation of paraconsistent Nelson logic,
and $\eta^{\circ}(\logic{J})$ is the axiomatic extension of $\eta(\logic{J})$ obtained by  adding the explosion axiom
($\phi \to (\nnot \phi \to \psi)
$) and the  \emph{normality} axiom
$\neg \neg (\phi \lor \nnot \phi)$~\cite[Prop.~10.1.9]{Odin08}.
These two propositional axioms have a straightforward algebraic interpretation: the former, as we have seen, 
characterises $\N$-lattices among $\PN$-lattices (or, equivalently, the
$\PN$-lattices of type $Tw( \A[B],  F, I )$ with $ \A[B]$ bounded by $0$ and $I = \{ 0 \}$), 
while the latter
characterises  the $\PN$-lattices of type $Tw( \A[B],  F, I )$ such that $F$ is the least dense filter,
i.e.~$F =D(\A[B]) $.

The maps $\sigma$,  $\eta$ and $\eta^{\circ}$
provide the following link between family of super-$\PN_\bot$-logics and the family of super-intuitionistic logics, both viewed as complete lattices~\cite[Prop.~10.1.12]{Odin08}:
\begin{enumerate}[label=(\roman*)]
\item $\sigma$ is a  complete and surjective  lattice homomorphism;
\item $\eta$ is a complete  lattice embedding;
\item $\eta^{\circ}$ is an isomorphism between the lattice of  super-intuitionistic logics
and the lattice of normal (i.e.~satisfying~$\neg \neg (\phi \lor \nnot \phi)$) extensions of Nelson logic. 
\end{enumerate}
This connection 
may then be used to obtain information on the order-theoretic structure of the lattice of super-$\PN_\bot$-logics. For instance, excluding
the trivial logic, we can observe that this lattice has, besides a greatest element $\eta^{\circ}(\logic{CL})$, where $\logic{CL}$ is classical logic,
 exactly three co-atoms, one of them coinciding with three-valued {\L}ukasiewicz logic.  
Due to space limitations we shall not mention the numerous further results in this direction, but refer the reader to Sections 10.2--10.5 of~\cite{Odin08}.

%


\subsection{Applications and abstractions of twist-structures} \label{ss:appabs}

Over time, the twist-structure construction  has proven   to be
a flexible and powerful tool, which has been used mainly for
(1) `reducing' certain questions regarding unfamiliar classes of algebraic structures
to more familiar settings, and (2) introducing new classes of algebras -- usually
as intended semantics for some logical system --   possessing certain desired properties.
In more recent years,  twist-structures
have been generalised and extended in several directions,
with original research being naturally  focused on a third line of research, namely (3) the nature, properties and applicability of the construction itself.
In this subsection we shall  discuss a few outstanding examples from each of the above-mentioned
 trends. 

The first (1), to which most applications of twist-structures within the area of 
Nelson logics belong, is certainly the oldest among the three.
Since  the above-mentioned works of Fidel, Vakarelov and Sendlewski, 
its main goal has been   to reduce, as it were,  the study of 
$\N$-lattices, 
and later on also of $\PN$-lattices (Odintsov), to that of (respectively) Heyting algebras and their
$0$-free subreducts (i.e.~Brouwerian lattices),
thus being able to exploit well-developed techniques and  results available for the latter classes of algebras.
A similar technique was applied -- independently of research in the  Nelson setting, apparently -- 
to the study of \emph{bilattices}, algebraic structures 
introduced 
by Ginsberg~\cite{Gi88} as a uniform framework for (non-monotonic) inference in AI, which
generalise the Belnap-Dunn logic
carrying two simultaneous  lattice reducts.
In this case twist-structures provided a link between the theory of bilattices and that 
of plain lattices (or lattices with an involution); the connection between bilattices and the algebras of Nelson logics
 appears to have been first noticed
in~\cite{Rivi0xa}; 
we shall say more on bilattices later. 
To mention but one last example within this trend,
twist-structures have been used to provide a representation for \emph{Sugihara monoids} (algebraic models
of certain relevance logics) as twist-products over \emph{relative Stone algebras} (i.e.~prelinear Brouwerian lattices)
enriched with a modal operator called a \emph{nucleus};
a similar approach has been recently employed to extend the twist-structure construction
beyond the setting of involutive lattices (more on this below).

Having observed that a number of algebras associated to non-classical logics are indeed representable as twist-structures -- e.g.~$\N$- and $\PN$-lattices, various classes of bilattices, Sugihara monoids,  Kleene algebras~\cite{Cign86} 
-- the obvious next step for a mathematical mind was to try and explore abstractly
the limits of the twist construction itself. This resulted in the research directions introduced above as (2) and (3). 

The earliest papers belonging to (2) were perhaps~\cite{JaRi12,Rivi11}, 
followed over the next decade by a number of others~\cite{agliano2021varieties,Busa14,ghorbani2016hoop,OnRi14,
Ri14,rivieccio2017four}. 
The main idea in this setting is to start with a familiar class of algebras (call it $\mathsf{K}$)
and 
look at the class   $Tw(\mathsf{K})$ of all algebras that can be obtained
as twist-structures over  members of $\mathsf{K}$. In most of the examples  appearing in the above-mentioned papers,
$\mathsf{K}$ is a class of residuated lattices which gives rise to a class  $Tw(\mathsf{K})$ whose members
are also residuated lattices with some additional operations (in particular, an involutive negation
given as in Definition~\ref{Def:twi}). Typical questions in this setting are: 
\begin{itemize}
\item
whether $Tw(\mathsf{K})$ forms an equational class, and, if so, one that  admits a transparent presentation relating to 
more familiar classes of algebras; 

\item
which universal algebraic properties does $Tw(\mathsf{K})$ inherit from
 $\mathsf{K}$, or viceversa;
in particular,
whether a result analogous to Theorem~\ref{Prop:OdCon} -- establishing a correspondence between the generators of both classes, if these are equational -- can be obtained; 

\item what is the relation between the logical counterpart of $\mathsf{K}$  and the logic  naturally associated to  $Tw(\mathsf{K})$.
\end{itemize}

To mention but one example, the paper~\cite{Busa14} addresses the following question. Consider the twist construction that, given a Brouwerian lattice $\A[L]$, produces a family of $\PN$-lattices, i.e.~all the possible twist-structures over $\A[L]$
according to Definition~\ref{Def:twi}. Suppose we relax the requirements on the factors, 
that is,
we only require $\A[L]$ to be a (commutative, integral) residuated lattice. Then a straightforward generalisation of the construction
given in Definition~\ref{Def:twi}  allows one to obtain a family of twist-structures over  $\A[L]$, each of them being
itself a (not necessarily integral) commutative residuated lattice~\cite[Thm.~3.1]{Busa14}.
In the terminology introduced above, letting  $\mathsf{CIRL}$  be the variety of all commutative integral residuated lattices, denote by 
$Tw(\mathsf{CIRL})$  the class of algebras obtained as twist-structures over some member of $\mathsf{CIRL}$;
the algebras in  $Tw(\mathsf{CIRL})$ have been dubbed  \emph{Kalman lattices} in~\cite{Busa14}.
The authors provide an equational axiomatisation for Kalman lattices, and extend to the latter class
a number of well-known results on $\PN$-lattices, establishing in particular
an analogue of Theorem~\ref{Prop:OdCon}.
It is further proved in~\cite{Busa14} that
every Kalman lattice $\A$ embeds into a twist-structure $Tw(\A[L])$, with
$\A[L] \in \mathsf{CIRL}$; an isomorphism result such as that of Theorem~\ref{Prop:OdRep}, however, 
could so far only be established
for  the subvariety  of $\mathsf{CIRL}$ that satisfies the \emph{Glivenko identity}:
$$
\neg \neg (x \to y) \approx x \to \neg \neg y.
$$

The above results have a straightforward categorical counterpart. Namely, 
one can establish an adjunction  between the algebraic  category of
$\mathsf{CIRL}$ and the category corresponding to Kalman lattices.
If we restrict our attention to special subcategories, then the adjunction can be upgraded to a categorical equivalence:
this holds for the case of the Kalman lattices corresponding to \emph{full} twist-structures 
(where $F = I = L$, in the terminology of Definition~\ref{Def:twi}) and also for 
the subcategory of $\mathsf{CIRL}$ whose members
 satisfy the Glivenko identity (the latter is not among the results
explicitly stated in~\cite{Busa14},
but is an easy consequence thereof). By specialising this result,  one can indeed recover
the categorical equivalence between $\PN$-lattices and twist-structures of type
$Tw( \A[B],  F, I )$ established in~\cite{JaRi14}. 


Over the last decade, the issues  dealt with in the papers belonging to the research line (2) have been naturally 
formulated in  increasingly more general and abstract contexts, therefore leading to an investigation on the very nature and applicability 
of the twist-structure construction, i.e., the line 
introduced earlier as (3). In our opinion, 
even today the latter studies cannot be said to have attained a fully satisfactory level of generality, for reasons that will be discussed presently. For the time being, let us mention that the most interesting series of papers recently published in this direction are certainly 
those on biliattices due to H.~Priestley and collaborators~\cite{CaCrPr15,CaPr15c,CaPr16}. 

The twist (or `product') representation of bilattices may be viewed as a simplification of
the construction for $\PN$-lattices. Indeed, bilattices having an implication (see~\cite{Bou13,Rivi0xa}) 
correspond, structurally, precisely to those $\PN$-lattices that are representable as twist-structures
$Tw( \A[B],  F, I )$ where $B = F = I$, i.e.~the \emph{full} twist-structures. 
Thus, as far as bilattices as twist-structures are concerned, the parameters $F $ and $I$ 
may be omitted, and we may simply denote by  $Tw( \A[B])$ a twist-structure (over some algebra 
$\A[B]$) viewed as a bilattice.

The algebraic language  in which bilattices are traditionally  presented 
need not include the Nelson implication, but it always 
includes the operations $\land$ and  $\lor$ 
(given, on each  bilattice 
 $Tw( \A[B])$, 
as per Definition~\ref{Def:twi}) as well as an independent pair 
of lattice operations ($\otimes, \oplus$) defined as in a direct product, that is given,
for all $\la a,b \ra , \la c , d \ra \in B \times B$, by:
\begin{align*}
\la a  , b  \ra \otimes \la c , d   \ra  &  := \la a  \land c  , b  \land d  \ra, \\
\la a  , b  \ra \oplus \la c , d   \ra  & := \la a  \lor c  , b  \lor  d \ra.
\end{align*}
These are obviously lattice operations, which determine a second lattice order on $Tw( \A[B])$:
hence the term \emph{bi}-lattice.

Abstractly, a bilattice may be  defined as an algebra 
$\A = \la A; \land, \lor, \otimes, \oplus \ra $ such that 
the reducts 
$\la A; \land, \lor \ra $ and
$\la A; \otimes, \oplus \ra $ are both lattices. 
In practice, the most interesting classes of bilattices arise by imposing some interaction between 
the two  lattice structures, for instance by requiring the lattice operations to be monotone 
with respect to the other lattice order (so-called \emph{interlaced bilattices}) or by 
adding a negation operator $\nnot$
 (also given, on twist-structures, as in Definition~\ref{Def:twi})
that reverses the order corresponding to $\land$ and $\lor$ (as in the case of   $\PN$-lattices)
while preserving the order that corresponds to $\otimes$ and $\oplus$. 

Having the four lattice operations available, and
independently of the existence of an implication in the language, we can
establish twist representation theorems
for various equationally defined classes of bilattices
(see~\cite{CaPr15c,Rivi0xa} for a comprehensive list).
It may be interesting to observe that, in the case of negation-free bilattices,
the equivalence relation $\Xi$ of Definition~\ref{Def:N4} must be replaced by \emph{two}
relations (say, $\Xi_+$ and $\Xi_-$) which can be defined (on each bilattice $\A$) as follows
(this was first noticed in~\cite{BoRi11}): 
$$
\Xi_+ := \{ \la a, b \ra \in A \times A : a \land b = a \otimes b \} 
\qquad
\Xi_- := \{ \la a, b \ra \in A \times A : a \land b = a \oplus b \}.
$$
On a twist-structure, $\Xi_+$ is easily seen to be the relation that identifies all the pairs that share the same second coordinate,
while $\Xi_-$ identifies all the pairs having the same first coordinate. 
Given a bilattice $\A = \zseq{A; \land, \lor, \otimes, \oplus}$,
the twist-structure is then defined on the direct product
$\zseq{A; \land, \lor }/{\Xi_+} \times \zseq{A; \land, \lor }/{\Xi_-}$.
Note that  negation-free bilattices 
are,  in this sense, more general than $\PN$-lattices,
for on the latter the two relations are inter-definable: indeed, we have
$\la a,b \ra  \in \Xi_+ $  iff  $\la \nnot a, \nnot b \ra  \in \Xi_- $ 
and 
$\la a,b \ra  \in \Xi_- $  iff  $\la \nnot a, \nnot b \ra  \in \Xi_+ $.
 
The studies of Priestley and her collaborators~\cite{CaCrPr15,CaPr15c,CaPr16} in the setting of bilattices (though cast in more abstract categorical terms)
resemble the ones we have described with respect to Kalman lattices: 
one starts with a class $\mathsf{K}$ of algebras (typically lattices, perhaps enriched with further operations)
and looks at the resulting class of (enriched) bilattices $Tw(\mathsf{K})$ consisting of (in this case, only the  \emph{full}) twist-structures over members of
$\mathsf{K}$. The main novelty in~\cite{CaPr15c} is the observation that 
the twist representation theorem
(and its 
consequences, 
notably the equivalence of categories) essentially depends only on the existence of certain algebraic terms in the language of $Tw(\mathsf{K})$; the representation results 
can then be proved    
in a uniform categorical way for all the  classes of algebras of interest. 
 
Consider a class of algebras   $\mathsf{K}$ in some algebraic language $\Sigma$. 
To obtain a class $Tw(\mathsf{K})$, it suffices to fix a set $\Gamma$ of pairs of $\Sigma$ terms,
declaring $\Gamma$ to be the algebraic language of $Tw(\mathsf{K})$. For instance, if 
$\mathsf{K}$ is a class of  lattices (so $\Sigma = \{\land, \lor \}$), then the bilattice operation
$\otimes$ may be represented  by the pair of $4$-ary $\Sigma$-terms $\la s_{\otimes}, t_{\otimes} \ra $ defined by
$s_{\otimes}(x_1, x_2, x_3, x_4) : = x_1 \land x_3 $ 
and $t_{\otimes}(x_1, x_2, x_3, x_4) : = x_2 \land x_4 $, 
while the bilattice negation $\nnot$ is given by the pair of binary $\Sigma$-terms
$\la s_{\nnot}, t_{\nnot} \ra$ defined by
$s_{\nnot}(x_1, x_2) : = x_2 $ and
$t_{\nnot}(x_1, x_2) : = x_1 $.

Now, considering the varieties $\mathbb{V}(\mathsf{K})$ and
 $\mathbb{V}(Tw(\mathsf{K}))$ generated  by (respectively) $\mathsf{K}$ and $Tw(\mathsf{K})$,
Cabrer and Priestley  pose the following question: under which conditions can one establish 
 a twist representation theorem
 between  $\mathbb{V}(\mathsf{K})$ and $\mathbb{V}(Tw(\mathsf{K}))$,
  thereby also obtaining an  equivalence between the corresponding algebraic categories?
  The surprisingly simple answer, which constitutes the main new insight of~\cite{CaPr15c},
 is that a sufficient condition for this to hold is the existence of certain terms in $\Gamma$.

Thus,  for instance, one requires the existence of a  $\Gamma$-term $v$ such that,
 for every $\A \in \mathsf{K}$ and for all $a,b,c,d \in A$, one has
 $v ( \la a,b \ra, \la c, d \ra ) = \la a, d \ra $. The role of this term,
 which in the case of bilattices can be given by
 $v (x,y ) := (x \otimes (x \lor y )) \oplus (y \otimes (x \land y) ) $,
 is to merge pairs of elements. One also postulates a
  permuting $\Gamma$-term $s$ satisfying  $s (\la a, b \ra ) = \la b, a \ra $, which
 is obviously the bilattice (or Nelson) negation; 
 see~\cite[Definition~3.1]{CaPr15c} for further details.
 
 If the above-mentioned terms 
 exist,  then a twist representation
 theorem and a categorical equivalence can be established between
 $\mathbb{V}(\mathsf{K})$ and $\mathbb{V}(Tw(\mathsf{K}))$.
As shown in~\cite{CaPr15c},  virtually all  the classes of bilattices 
known  
  to be representable as twist-structures  (as of 2015) possess
 algebraic terms that satisfy the above-mentioned requirements.
Hence, all the known corresponding categorical equivalences -- which had been established by different
\emph{ad hoc} constructions in the previous literature -- 
 are retrieved 
 as special cases of the general results of~\cite{CaPr16,CaPr15c};
 even certain biliattice-like algebras (e.g.~\emph{trilattices}) 
and classes of bilattices not previously known to be representable 
(e.g.~bilattices with a negation and a so-called \emph{conflation} that do not commute with each other) 
are covered by the same approach. 
 
To conclude our overview of abstractions, let us mention an even more general twist construction
introduced 
in a series of recent papers~\cite{thiago2021negation,riviecciotwonegs,
rivieccio2021fragments,rivieccio2021quasi,
rivieccio2019quasi,riviecciosp2021quasi}.
The latter explore the possibility of
representing Nelson-like and bilattice-like algebras endowed with a negation that is not necessarily involutive. 
The motivation for such a study comes from two independent lines of research: on the one hand, the study
of algebraic structures arising as duals of \emph{bitopological spaces}~\cite{jakl2016bitopology};
 on the other hand, an algebraic investigation of the meaning and implications of the 
Nelson identity 
in the setting of (non-involutive) residuated lattices. 

Drawing inspiration from the representation of negation-free bilattices, one may start from a direct product
$L_+ \times L_- $ of two algebras having a lattice reduct (plus perhaps additional operations), requiring 
$L_+$ and $L_- $
to be (not necessarily isomorphic, as in the Nelson and bilattice cases, but) related by two maps
$n \colon L_+ \to L_- $ and 
$p \colon L_- \to L_+ $ that satisfy suitable conditions (e.g.~forming a Galois connection). 
Then, by letting
$\nnot \la a,b\ra : = \la p(b), n(a) \ra $ for all $ \la a,b\ra \in L_+ \times L_- $,
one obtains a negation-like operator satisfying  properties that will depend on what we ask of
the maps $n$ and $p$: as a limit case, by requiring both to be mutually inverse isomorphisms,
we recover the standard twist construction for involutive algebras. 

The four bilattice operations on  $ L_+ \times L_- $ may be defined as in the case of negation-free bilattices, namely,
letting
$$
\la a  , b  \ra \land \la c , d   \ra  := \la a  \land_+ c  , b  \lor_- d  \ra
$$
where $\land_+$ denotes the meet on $L_+$ and $\land_-$ denotes the meet on $L_-$, and similarly with the other
meet ($\otimes$) and the two joins ($\lor$ and $\oplus$). 
Further operations that shuffle the two components may also be easily obtained in this setting, for instance  a Nelson-like implication
can be defined by: 
$$
\la a  , b  \ra \to \la c , d   \ra  := \la a  \to_+ c  , n(a)  \land_- d  \ra
$$
where $\to_+$ denotes the implication on $L_+$.
 
As shown in~\cite{rivieccio2021duality}, 
this straightforward generalisation  proved to be sufficiently flexible to represent a number of classes of algebras related to 
bilattice and Nelson logics (and even,  as a limit case, Heyting algebras themselves). This generalisation, whose limits are currently being investigated, has already enabled us to resolve various questions related to these algebras.




%
%
%
%
%
%

\subsection{Priestley duality for bounded $\PN$-lattices} \label{ss:pri}

As mentioned earlier, an $\PN$-lattice
$\alg{A} = \zseq{A; \land, \lor, \to, \nnot}$ may be viewed as a De Morgan 
lattice $\zseq{A; \land, \lor, \nnot}$ structurally enriched with an implication operation.
If $\alg{A}$ is bounded (by $\bot$ and $\top$), then $\zseq{A; \land, \lor, \nnot, \bot, \top}$ 
is a bounded De Morgan lattice (i.e.~a {De Morgan algebra});
and a De Morgan algebra, in turn,  is  just a bounded distributive lattice
enriched with an involutive dual automorphism. 
This observation inspired Cornish and Fowler~\cite{CoFo77,CoFo79} 
to introduce a Priestley-style topological duality for  De Morgan algebras which is
a straightforward extension of standard Priestley duality
for bounded distributive lattices~\cite{priestley1970representation}. 
In the Cornish-Fowler duality, the  topological spaces that form the category dual to De Morgan lattices (\emph{De Morgan spaces})
are just Priestley spaces equipped with an extra unary function that is an order-reversing homeomorphism.
It is also easy to specialise the duality in order to obtain spaces corresponding to \emph{Kleene algebras}, i.e.~De Morgan 
algebras that additionally satisfy the following \emph{Kleene identity}:
$$
x \land \nnot x \approx x \land \nnot x \land (y \lor \nnot y).  
$$
The latter observation is relevant in the Nelson setting, for it is well known that 
the class of implication-free reducts of $\N$-lattices is (precisely) the class of  Kleene algebras. 

The above considerations suggest that the Cornish-Fowler approach to duality might be successfully extended
to $\PN$- and $\N$-lattices, obtaining a duality for these two classes of algebras that relies on the Cornish-Fowler
duality just as the latter relied on Priestley duality for distributive lattices. 
This strategy was indeed  successfully  pursued by Odintsov, who
 introduced in~\cite{Od10} 
 a Priestley-style duality for $\PN$-lattices based on the Cornish-Fowler duality for De Morgan lattices, 
 thereby also obtaining, by specialisation, a
 duality for $\N$-lattices (the latter class of algebras had however been studied from a duality
 point of view already by Sendlewski~\cite{sendlewski1984topological} and Cignoli~\cite{Cign86}).

Before we get into the details of Odintsov's duality for $\PN$-lattices, let us mention an alternative 
approach which may be employed in a duality-theoretic study of these algebras. 
As mentioned in the preceding subsection, the twist representation of $\PN$-lattices can be easily 
formulated as a (covariant) categorical equivalence  between $\PN$-lattices and a category having for objects 
tuples of type
$( \A[B],  F, I )$, where $\A[B]$ is a Brouwerian lattice, $F$ is a dense lattice filter 
and $I$  is a lattice ideal 
of $\A[B]$. This suggests that, by introducing a  category of topological spaces 
dual to the category
of tuples of type $( \A[B],  F, I )$, we would also obtain 
a duality for $\PN$-lattices simply by composing the relevant functors.
This is the approach applied to $\PN$-lattices in~\cite{Jans13,JaRi14} and extended to more general algebras in~\cite{rivieccio2021duality} 
(see also the earlier papers~\cite{JuRi12,MoPiSlVo00} 
on dualities for various classes of bilattices). 

As Odintsov's duality relied on the Cornish-Fowler duality for De Morgan lattices,
so the approach developed in~\cite{Jans13,JaRi14}
relies on Esakia duality for Heyting algebras, which is essentially a restriction (rather than an extension) 
of Priestley duality for distributive lattices.
Indeed, the fact that Esakia duality concerns  more familiar algebraic structures and spaces may be regarded as an advantage of the  twist-structure-based  approach to duality
over  Odintsov's proposal. 

Let us now recall the main definitions and results involved in the duality for $\PN$-lattices
introduced in~\cite{Od10}. 
The original Priestley duality concerns the category~$\DL$ of bounded distributive
lattices and bounded lattice homomorphisms. Usually, Priestley-style
dualities deal with  algebras that have a \emph{bounded} lattice reduct. The presence of the bounds is not a necessary requirement, but the corresponding spaces 
turn out to be more natural than those one would obtain by allowing for unbounded algebras.
In the same spirit, Odintsov~\cite{Od10} also develops his duality only for bounded $\PN$-lattices; for details on how to deal with (Esakia duality for) unbounded algebras, see~\cite{Jans13,JaRi14}.

To every bounded distributive lattice~$\Al[L]$, one associates the set
$X(\Al[L])$ of its prime filters. 
On $X(\Al[L])$ one has the \emph{Priestley topology}~$\tau$,
generated by the sets $\phi(a):=\{x\in X(\Al[L]):a\in x\}$ and
$\phi'(a):=\{x\in X(\Al[L]):a\not\in x\}$, and the inclusion relation
between prime filters as an order. The resulting ordered topological
spaces are called \emph{Priestley spaces}.
Abstractly, a \emph{Priestley space} is defined as a compact ordered topological space
$\la X, \tau, \leq \ra $ such that, for all $x,y \in X$, if $x \not \leq y$, 
then there is a clopen up-set $U \subseteq X$ with $x \in U$ and $y \notin U$.
This condition is known as the \emph{Priestley separation axiom}.
It follows that $\la X, \tau \ra$ is a Stone space.

A homomorphism~$h$ between bounded distributive lattices $\Al[L]$
and~$\Al[L]'$ gives rise to a function~$X(h):X(\Al[L]')\to X(\Al[L])$, defined
by $X(h)(x')=h^{-1}[x']$, that is continuous and order-preserving.
Taking functions with these properties (called \emph{Priestley functions})
as morphisms
between
Priestley spaces one obtains the category~$\PS$, and $X$ is now
readily recognised as a contravariant functor from $\DL$ to~$\PS$.

For a functor in the opposite direction, one associates to every
Priestley space $\X=\la X,\tau,\leq \ra$ the set~$L(\X)$ of clopen
up-sets. This is a bounded distributive lattice with respect to the
set-theoretic operations $\cap, \cup, \emptyset$, and~$X$. To a
Priestley function $f:\X\to\X'$ one associates the function~$L(f)$, given
by $L(f)(U')=f^{-1}[U']$, which is 
a bounded lattice
homomorphism from $L(\X')$ to~$L(\X)$. 
So $L$~constitutes
a contravariant functor from $\PS$ to~$\DL$.

The two functors are adjoint to each other with the units given by:
$$\begin{array}{llll}
\Phi_{\Al[L]}\colon\Al[L] \to L(X(\Al[L]))&&\Phi_{\Al[L]}(a) = \{ x \in X(\Al[L]) : a \in x \}\\
\Psi_{\X}\colon\X \to X(L(\X)) &&\Psi_{\X}(x) = \{ U \in L(\X) : x \in U \}
\end{array}$$
One shows that these are the components of a natural transformation from
the identity functor on~$\DL$ to $L\circ X$, and from the identity
functor on~$\PS$ to $X \circ L$, respectively, satisfying the required
diagrams for an adjunction. In particular, they are morphisms in their
respective categories. Furthermore, they are isomorphisms and
thus the central result of Priestley duality is obtained.

\begin{theorem}
 The
  category $\DL$ of bounded distributive lattices with homomorphisms
  is dually equivalent to the category~$\PS$  
 of Priestley spaces with Priestley functions  via the above-defined functors $X$ and $L$.
\end{theorem}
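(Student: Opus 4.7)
The plan is to verify four ingredients that together yield a contravariant equivalence between $\DL$ and $\PS$: that $X\colon\DL\to\PS$ and $L\colon\PS\to\DL$ are well-defined contravariant functors; that $\Phi$ and $\Psi$ are natural transformations; and that each component $\Phi_{\Al[L]}$ and $\Psi_{\X}$ is an isomorphism in its respective category. Once this is done, $\Phi$ and $\Psi$ exhibit mutually inverse natural isomorphisms between the identity functors and the composites $L\circ X$ and $X\circ L$, and the dual equivalence follows formally.

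Functoriality is largely routine. For $X$, one checks that if $h\colon\Al[L]\to\Al[L]'$ is a bounded lattice homomorphism, then $h^{-1}$ sends prime filters of $\Al[L]'$ to prime filters of $\Al[L]$ (using that $h$ preserves $\land,\lor,0,1$), that $X(h)$ is order-preserving under inclusion, and that preimages of the subbasic opens $\phi(a),\phi'(a)$ are again subbasic; dually, for $L$ one checks that $f^{-1}$ preserves clopen up-sets and commutes with $\cap,\cup,\emptyset$, and the whole space. Preservation of identities and of composition is then automatic from elementary properties of inverse images. Naturality of $\Phi$ and $\Psi$ reduces in each case to an unwinding of definitions showing that both sides of the naturality square send a generator to the same prime filter (respectively, the same clopen up-set).

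The substantive content is the bijectivity of $\Phi_{\Al[L]}$ and $\Psi_{\X}$. Each $\phi(a)$ is a clopen up-set (it is subbasic open, its complement $\phi'(a)$ is also subbasic open, and primeness of the filters makes it upward closed), and a direct calculation shows that $\Phi_{\Al[L]}$ preserves $\land,\lor,0,1$. Injectivity of $\Phi_{\Al[L]}$ is the prime filter separation lemma for distributive lattices: if $a\nleq b$, some prime filter contains $a$ but not $b$. Surjectivity of $\Phi_{\Al[L]}$ is the first place where compactness is essential: a clopen up-set $U\subseteq X(\Al[L])$ is compact open, so it is a finite union of basic opens of the form $\phi(c_k)\cap\phi'(d_k)$; a standard manipulation using distributivity together with the up-set property of $U$ produces a single element $a\in L$ with $\phi(a)=U$.

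For $\Psi_{\X}$, one verifies that $\{U\in L(\X):x\in U\}$ is a prime filter of $L(\X)$ for every $x\in X$, and that $\Psi_{\X}$ is continuous and order-preserving. Injectivity is precisely where the Priestley separation axiom intervenes: two distinct points can always be ordered-separated by a clopen up-set and hence produce distinct prime filters. Surjectivity rests on compactness: given a prime filter $P\subseteq L(\X)$, the family $\{U:U\in P\}\cup\{X\setminus V:V\notin P\}$ enjoys the finite intersection property, hence has nonempty intersection, yielding a point $x$ that generates $P$. A continuous bijection from a compact space to a Hausdorff one is a homeomorphism, which, combined with the order bijection, completes the argument. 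The main obstacle throughout is the interplay between the Priestley separation axiom (which here replaces plain Hausdorffness) and compactness: every critical step — surjectivity of the unit at each object, bijectivity and openness of the counit, and the fact that prime filters of $L(\X)$ are exhausted by points of $\X$ — depends on combining these two ingredients.
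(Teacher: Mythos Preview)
Your outline is correct and follows the standard route to Priestley duality. Note, however, that the paper itself does not supply a proof: being a survey, it merely sets up the functors $X$ and $L$, names the unit maps $\Phi$ and $\Psi$, asserts that they are natural isomorphisms, and then states the theorem as the ``central result of Priestley duality.'' Your proposal thus does not differ from the paper's approach so much as it fills in the details the paper deliberately omits; the ingredients you isolate (prime filter separation for injectivity of $\Phi$, compactness for surjectivity of both units, the Priestley separation axiom for injectivity of $\Psi$) are exactly the classical ones.
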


As mentioned earlier, in order to obtain a duality for bounded De Morgan lattices,
 it suffices to find a way to represent the De Morgan negation on Priestley spaces. 
This was accomplished by Cornish and Fowler~\cite{CoFo77,CoFo79} as follows. 
Given a bounded De Morgan lattice
 $\Al[L] = \la L; \land, \lor, \nnot, 0, 1 \ra$
and a prime filter $ x \in X(\Al[L])$, define:
$$\nnot x := \{ a \in L :  \nnot a \in x \}.$$

For any $x \in X(\Al[L])$, we have that $ \nnot x $ is a prime ideal. Hence, defining:
$$
g ( x ) : = L \setminus  \nnot x
$$
we have that $g(x)$ is a prime filter (this construction dates back, at least,  to~\cite{Bial57}
). It is then easy to check that the map
$g \colon X(\Al[L]) \to  X(\Al[L]) $ is an order-reversing involution on the poset $\la X(\Al[L]), \subseteq \ra $, i.e., that 
$g^2 = id_{X(\Al[L]) }$ and,
for all $x, y \in X(\Al[L]) $, we have:
$$
x \subseteq y \quad  \textrm{ iff }  \quad g(y) \subseteq g(x).
$$

If we endow $X (\Al[L])$ with the Priestley topology,
we have that the structure
$$
\la X (\Al[L]), \tau, \subseteq, g \ra
$$
is a \emph{De Morgan space}, which is abstractly defined as follows:
a \emph{De Morgan space} is a structure  $\X = \la X, \tau, \leq, g \ra$ 
where
 $\la X, \tau, \leq \ra$ is a Priestley space 
 and
$g \colon \X \to  \X $ is an order-reversing homeomorphism such that $g^2 = id_{\X}$.

Conversely, given a De Morgan space $\X = \la X, \tau, \leq, g \ra$,
one defines on the Priestley dual $\la L(\X); \cap, \cup, \emptyset, X \ra$
an operation~$\nnot$ as follows. For any $U \in L(\X)$,
let 
$$g[U] := \{ g(x) : x \in U\}$$ 
and
$$
\nnot U : = {X} - {g[U]}.
$$%
Then $\la L(\X); \cap, \cup, \nnot, \emptyset, X \ra$ is a  bounded 
De Morgan lattice. One also shows that the unit maps~$\Phi_{\Al[L]}$
preserve the negation, so they are bounded De Morgan lattice
isomorphisms.

On the spatial side one defines a \emph{De Morgan function} $f
\colon \X \to \X'$ to be a Priestley function for which $f
\circ g = g' \circ f$. One then shows that the unit maps~$\Psi_{\X}$
are in fact De Morgan functions and hence De Morgan isomorphisms
(since the extra structure~$g$ can be viewed as a unary algebraic
operation).

\begin{theorem}
The category $\DM $ of bounded De Morgan
  lattices with homomorphisms is dually equivalent to the category $\DMS$ of
  De Morgan spaces with De Morgan functions
via the above-defined functors $X$ and $L$.
\end{theorem}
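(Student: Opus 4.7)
The plan is to bootstrap from the already-established Priestley duality between $\DL$ and $\PS$, showing that the extra De~Morgan structure on each side is preserved and reflected by the functors. The novelty lies entirely in handling the involution; everything concerning the bounded distributive lattice reduct is inherited from the classical theorem, so I would not redo those calculations.

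First I would verify that the functor $X$ genuinely lands in $\DMS$. Given $\Al[L] \in \DM$, the map $g \colon X(\Al[L]) \to X(\Al[L])$ defined by $g(x) = L \setminus \{a : \nnot a \in x\}$ must be checked to be (a) well-defined, i.e.\ each $g(x)$ is a prime filter, (b) an order-reversing involution, and (c) a homeomorphism in the Priestley topology. Item (a) follows since $\nnot$ is a dual automorphism of $\la L ; \land, \lor\ra$ (so the preimage of $x$ under $\nnot$ is a prime ideal). Item (b) is immediate from $\nnot \nnot a = a$ combined with the observation that $a \in g(x)$ iff $\nnot a \notin x$. For (c), I would compute directly that $g^{-1}[\phi(a)] = \phi'(\nnot a)$, so $g$ pulls subbasic clopens to clopens; continuity, together with $g \circ g = \mathrm{id}$, yields that $g$ is a homeomorphism.

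Dually, for a De~Morgan space $\X = \la X, \tau, \leq, g\ra$, I would show that $\nnot U := X \setminus g[U]$ maps $L(\X)$ into itself and satisfies the De~Morgan identities. Since $g$ is an order-reversing homeomorphism, $g[U]$ is a clopen down-set whenever $U$ is a clopen up-set, hence $\nnot U$ is a clopen up-set. Double negation follows from $g^2 = \mathrm{id}$, and the two De~Morgan laws reduce to the fact that $g$ is a bijection (so it commutes with finite intersections and unions of images). The Kleene identity, should one wish to descend to that subvariety, would drop out of the additional condition relating $g$ to the order.

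Functoriality on morphisms is the easy step: if $h \colon \Al[L] \to \Al[L]'$ is a De~Morgan homomorphism, then $X(h)(x') = h^{-1}[x']$ commutes with the respective $g$ maps because $h$ preserves $\nnot$; dually, if $f \colon \X \to \X'$ satisfies $f \circ g = g' \circ f$, then $L(f)(U') = f^{-1}[U']$ satisfies $L(f)(\nnot U') = \nnot L(f)(U')$ by a direct set-theoretic computation. Finally, I would check that the Priestley units $\Phi_{\Al[L]}$ and $\Psi_{\X}$ remain isomorphisms once the negation is taken into account: the identities $\Phi_{\Al[L]}(\nnot a) = \nnot \Phi_{\Al[L]}(a)$ and $\Psi_{\X}(g(x)) = g(\Psi_{\X}(x))$ unfold to tautologies from the definitions of $g$ and $\nnot U$. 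Naturality with respect to De~Morgan homomorphisms and functions follows from the naturality already established in the pure Priestley case.

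The main potential obstacle is step (c) above, namely continuity of $g$ on the Priestley space, because it is the one place where the topology and the involution interact nontrivially; once that is secured, every remaining verification is a bookkeeping exercise in which the De~Morgan axioms are routinely transported through the Priestley correspondence.
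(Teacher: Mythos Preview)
Your proposal is correct and follows essentially the same approach as the paper, which merely sketches the Cornish--Fowler construction without carrying out the verifications: the paper describes the maps $g$ and $\nnot U$, notes that the units $\Phi_{\Al[L]}$ and $\Psi_{\X}$ must be checked to respect the negation/involution, and then states the theorem. Your outline fills in precisely those checks, and the subbasic calculation $g^{-1}[\phi(a)] = \phi'(\nnot a)$ for continuity is the standard one.
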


De Morgan  duality specialises to one between the full subcategories of
bounded Kleene lattices 
and \emph{Kleene spaces}, defined as follows. Given a De Morgan
space $\la X, \tau, \leq, g \ra$, consider the sets
$$
X^+  : = \{ x \in X  : x \leq g (x) \}, \qquad \quad
X^-  : = \{ x \in X  : g (x) \leq   x \}.
$$
A \emph{Kleene space} is then defined as a De Morgan space $\la X,
\tau, \leq, g \ra$ such that $X = X^+ \cup X^-$.
specialising Kleene algebras further to Boolean algebras one obtains
the classical Stone duality by insisting that $g$ be the identity map.

In order to extend the Cornish-Fowler duality to $\PN$-lattices, it only remains to take care of the Nelson implication operator.
 
Let $\A = \la A; \land, \lor, \to, \nnot \ra$ be an $\PN$-lattice,
and let $\X (\A)$ be the De Morgan space corresponding to the 
$\to$-free reduct of $\A$.
We are going to use the quasiorder $\pr$ of Definition~\ref{Def:N4}
to distinguish  the prime filters of $\A$ between two further kinds.

A filter $x \in \X (\A)$ is a \emph{special filter of the first kind} 
(\emph{sffk} for short) if, for all $a,b \in A$,
$a \in x$ and $a \preceq b$ imply $b \in x$.
Dually,
$x \in \X (\A)$ is a \emph{special filter of the second kind} 
(\emph{sfsk} for short) if, for all $a,b \in A$,
$a \in F$ and $\neg b \preceq \neg a $ imply $b \in F$.

Every prime filter in $\X (\A)$ is either a \emph{sffk} or a \emph{sfsk} (it may be both).
That is, we have
$$
X(\Al) = X^1 (\Al) \cup X^2 (\Al),
$$
where $ X^1 (\Al)$ denotes the set of prime filters of the first kind and  $ X^2 (\Al)$ the set of prime filters of the second kind.
On the other hand, as mentioned earlier, in general we do not have 
$$
X(\Al) = X^+ (\Al) \cup X^- (\Al)
$$
unless the De Morgan reduct of $\Al$ is a Kleene algebra. The following properties, however, hold for
arbitrary $\PN$-lattices. 

\begin{proposition}
\label{Prop:OdDua}
Let $\Al$ be an $\PN$-lattice. 
\begin{enumerate}
\item  $\la X (\Al), \tau, \subseteq, g \ra$ is a De Morgan space.
\item $g[X^1 (\Al)] = X^2 (\Al)$.
\item $X(\Al) = X^1 (\Al) \cup X^2 (\Al)$ and $X^1 (\Al) \cap X^2 (\Al) = X^+ (\Al) \cap X^- (\Al)$.
\item $ X^1 (\Al)$ is closed in $\tau$ and $ X^1 (\Al)$ with the induced topology is an 
Esakia space\footnote{An \emph{Esakia space}
(also known as \emph{Heyting space})
is a
Priestley space such that, for any open set $O$, the downset $O \!\!
\downarrow$ is also open~\cite{Es74, Pr84}.}.
\item For all $x \in X^1 (\Al) $ and $y \in X^2 (\Al) $, if  $x \subseteq y$, then $x \in X^+ (\Al)  $, $y \in X^- (\Al)  $ and there exists 
$z \in X (\Al)$  such that $x, g[y] \subseteq z \subseteq g[x], y$.
\item  For all $x \in X^2 (\Al) $ and $y \in X^1 (\Al) $, if $x \subseteq y$, then $x \in X^+ (\Al)  $, $y \in X^- (\Al)  $  and
$x \subseteq g[y]$.
\end{enumerate}

\end{proposition}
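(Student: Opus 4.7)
The plan is to handle the six items sequentially, leveraging the Cornish--Fowler duality for the De Morgan reduct of $\Al$ together with the twist representation of Theorem~\ref{Prop:OdRep}. Item (1) is essentially free: the $\to$-free bounded reduct of $\Al$ is a De Morgan algebra, hence by Cornish--Fowler the structure $\la X(\Al), \tau, \subseteq, g \ra$ is a De Morgan space. For item (2), the definition $g(x) = A \setminus \{a : \nnot a \in x\}$ lets me verify directly that if $x \in X^1(\Al)$, $a \in g(x)$ and $\nnot b \preceq \nnot a$, then $\nnot b \notin x$ (otherwise the sffk property of $x$ would yield $\nnot a \in x$, contradicting $a \in g(x)$), so $b \in g(x)$; hence $g(x) \in X^2(\Al)$. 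Involutivity of $g$ gives the reverse inclusion.

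For item (3), I would first show that every prime filter of $\Al$ is sffk or sfsk. The key observation is that, via the twist representation, a prime filter of $\Al$ is determined by how it interacts with the congruence $\Xi$: either its $\preceq$-closure is proper (giving an sffk) or its closure under $\nnot \preceq^{-1} \nnot$ is proper (giving an sfsk), and the simultaneous failure of both closures can be excluded by combining primality with the characteristic $\PN$-identities (N4.vii)--(N4.xi). The second assertion reduces to the inclusion $X^1 \cap X^2 \subseteq X^+ \cap X^-$: if $x$ is both sffk and sfsk, then a short computation yields $a \in x$ iff $\nnot a \notin x$, i.e., $x = g(x)$, so $x \in X^+ \cap X^-$; the converse inclusion is analogous.

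For item (4), closedness of $X^1(\Al)$ in $\tau$ follows from
\[
X^1(\Al) \;=\; \bigcap_{a \preceq b}\bigl((X(\Al) \setminus \phi(a)) \cup \phi(b)\bigr),
\]
an intersection of clopen sets. To see that $(X^1(\Al), \tau|_{X^1(\Al)}, \subseteq)$ is an Esakia space, I would identify it with the Priestley dual of the Brouwerian factor $B(\Al) = \Al/\Xi$: sffks of $\Al$ are in natural bijection with prime filters of $B(\Al)$, and Esakia duality then supplies the open-downset property via the residuated implication inherited from $B(\Al)$.

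The main obstacle will be items (5) and (6). For both, the strategy is to combine primality with the $\PN$-identities linking $\to$, $\nnot$ and $\preceq$. Item (6) I expect to reduce to a direct chase: $x \subseteq y$ with $x \in X^2(\Al)$ and $y \in X^1(\Al)$ forces $\nnot a \notin x$ whenever $a \in x$ (otherwise a $\preceq$-chain propagated through $y$ would collapse $y$ to all of $A$), giving $x \in X^+(\Al)$, and symmetrically $y \in X^-(\Al)$; the inclusion $x \subseteq g[y]$ then follows by a similar chase. The delicate part is item (5): to produce the intermediate prime filter $z$, I would form the filter $F_0$ generated by $x \cup g[y]$ and the ideal $I_0$ generated by $(A \setminus g[x]) \cup (A \setminus y)$, and apply a standard Zorn's Lemma / prime-filter extension argument. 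The heart of the argument is verifying $F_0 \cap I_0 = \emptyset$, which is where the $\preceq$-closure conditions defining $X^1(\Al)$ and $X^2(\Al)$, together with the twist-theoretic identities relating $\to$ and $\nnot$, are essential.
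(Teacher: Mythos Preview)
The paper does not prove this proposition: it is stated as a summary of properties established in Odintsov's duality paper~\cite{Od10} and is then used to motivate the abstract definition of an N4-space. There is therefore no ``paper's own proof'' to compare against; Odintsov's original argument proceeds via the twist representation, much as you propose.

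Your outline is essentially the right one, and items (1), (2), (4), (5), (6) are handled soundly. The one place where your sketch is too optimistic is item~(3). For the inclusion $X^1(\Al)\cap X^2(\Al)\subseteq X^+(\Al)\cap X^-(\Al)$ you assert that if $x$ is both sffk and sfsk then ``a short computation yields $a\in x$ iff $\nnot a\notin x$''. This is not short: closure under $\preceq$ and under its $\nnot$-dual does not by itself force $x=g(x)$; you must invoke the specific $\PN$-identities (notably (N4.x) and (N4.xi), which connect $\land$, $\to$ and $\nnot$) together with primality, or else pass through the twist picture and argue that an sffk corresponds to a prime filter of $B(\Al)$ pulled back along the first projection while an sfsk corresponds to one pulled back along the second, so that membership in both forces the two underlying $B(\Al)$-filters to be comparable in the required way. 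The reverse inclusion $X^+(\Al)\cap X^-(\Al)\subseteq X^1(\Al)\cap X^2(\Al)$ is likewise not automatic: from $x=g(x)$ you still need to derive closure under $\preceq$, and this again uses the $\PN$-axioms (e.g.\ (N4.x)) rather than pure De~Morgan reasoning. Similarly, your argument that every prime filter is sffk or sfsk is presently only a heuristic (``simultaneous failure can be excluded''); the actual proof uses primality against an element of the form $y\lor\nnot(\nnot y\to\nnot x)$ produced by (N4.x) or (N4.xi). None of this is a wrong turn, but the ``short computation'' claims hide exactly the substance of Odintsov's argument.
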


Odintsov~\cite{Od10}  defines an \emph{N4-space} to be a tuple 
$\X
= \la X, X^1, \tau, \leq, g \ra $ such that
properties (1) to (6) of Proposition~\ref{Prop:OdDua} 
(where $X^2 : = g[X^1]$)
are satisfied.

Given an N4-space $\la X, X^1, \tau, \leq, g \ra $, the algebra $\la
L(\X); \cap, \cup, \nnot, \emptyset, X \ra$ defined as before is a bounded De
Morgan lattice. On this lattice, one defines an implication operation $\to 
$ as follows: for any $U, V \in L(\X)$,
$$
U \to V \ : = \  \big ( \, X^1 \, \ba \, ((U \ba V ) \cap X^1 ) \!\! \downarrow  \big  ) \cup \big ( \,  X^2 \, \ba \, (g[U] \ba V ) \, \big ).
$$
Then
$\la L(\X) ; \cap, \cup, \to, \nnot, \emptyset, X \ra$
is a bounded $\PN$-lattice.

To complete the picture, one defines an \emph{N4-function} to be a
function~$f$ between N4-spaces $\la X, X^1, \tau, \leq, g \ra$ and
$\la Y, Y^1, \tau', \leq', g' \ra$ that satisfies the following:
\begin{enumerate}[label=(\roman*)]
\item $f$ is a De Morgan function from $\la X, \tau, \leq, g \ra$ to $\la.
  Y, \tau', \leq', g' \ra$.
\item $f[X^1] \subseteq Y^1$.
\item $f \colon X^1 \to Y$ is an Esakia 
 function, i.e., for any open $O \in \tau'$,
$$
f^{-1}[(O \cap Y^1)\!\downarrow] \cap X^1 = (f^{-1}[O \cap Y^1])\!\downarrow \cap X^1.
$$
\end{enumerate}


\begin{theorem}\cite[Thm.~5.4]{Od10}
\label{th:caeqnfnfs}
The category $\NF$
of bounded $\PN$-lattices with homomorphisms is dually
  equivalent to the category $\NFS$ of N4-spaces with N4-functions  
via the above-defined functors $X$ and $L$.
\end{theorem}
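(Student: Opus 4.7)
The plan is to extend the Cornish--Fowler duality for bounded De Morgan lattices to $\PN$-lattices and N4-spaces, accounting for the additional Nelson implication operation and the distinguished subset $X^1$. Most of the topological scaffolding is already in place; what remains is to show that the extra structure on both sides transports correctly along the functors $X$ and $L$.

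The first step is to verify that $X$ and $L$ are well-defined on objects and morphisms. For a bounded $\PN$-lattice $\A$, the De Morgan space part of $X(\A)$ comes from the Cornish--Fowler construction, while Proposition~\ref{Prop:OdDua} provides properties (1)--(6), so $X(\A)$ is an N4-space. Given a $\PN$-homomorphism $h \colon \A \to \A'$, the map $X(h) = h^{-1}[\,\cdot\,]$ is already a De~Morgan function; one must verify that it sends special filters of the first kind to filters of the first kind (using that $h$ preserves $\pr$ via axiom~\ref{Itm:N2}) and that its restriction to $X^1$ is an Esakia function. In the opposite direction, given an N4-space $\X$, the definition of $U \to V$ yields a clopen up-set because $X^1$ is closed by Proposition~\ref{Prop:OdDua}(4) and downsets of clopens are clopen in the induced Esakia space; one then checks the equational axioms \ref{Itm:PN1eq}--\ref{Itm:PN11eq}, treating points in $X^1$ and in $X^2$ separately, since the formula for $\to$ splits according to this decomposition.

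The heart of the proof is that the unit transformations $\Phi_\A$ and $\Psi_\X$ remain isomorphisms in the enriched categories. By Cornish--Fowler, $\Phi_\A$ is already a bounded De~Morgan lattice isomorphism, so the work is to verify $\Phi_\A(a \to b) = \Phi_\A(a) \to \Phi_\A(b)$. Concretely, for $x \in X(\A)$, one shows $a \to b \in x$ iff $x$ lies in the right-hand side of the formula defining implication on $L(X(\A))$. The argument forks on whether $x \in X^1(\A)$ or $x \in X^2(\A)$: in the first case one exploits item~\ref{Itm:N3} of Definition~\ref{Def:N4} (Brouwerian quotient modulo $\cm$) together with the defining property of first-kind filters to reduce to the Esakia-style characterisation of Heyting implication on $X^1$; in the second case one uses \ref{Itm:N4} to transform $a \to b$ via $\nnot$ and rewrite the condition in terms of $g[U]$. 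Dually, $\Psi_\X$ is already a De~Morgan isomorphism, and one checks $\Psi_\X[X^1] = X^1(L(\X))$ by characterising first-kind prime filters of $L(\X)$ as those generated by collections of up-sets ``locally Heyting'' at the relevant point.

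The main obstacle will be the implication-preservation clause for $\Phi_\A$: it is here that the split formula for $U \to V$, the interaction of the quasiorder $\pr$ with prime filters, and the delicate geometry between $X^1$, $X^2$, $X^+$, $X^-$ (governed by items (5)--(6) of Proposition~\ref{Prop:OdDua}) must all be brought to bear simultaneously. Once this is in hand, naturality of $\Phi$ and $\Psi$ with respect to the new operations is a routine calculation, and a functorial diagram chase of the kind familiar from Priestley duality completes the dual equivalence between $\NF$ and $\NFS$.
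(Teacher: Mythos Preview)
The paper does not supply its own proof of this theorem: it is stated with a citation to Odintsov~\cite[Thm.~5.4]{Od10} and treated as an imported result in this survey, so there is no in-paper argument to compare against. Your sketch is a faithful outline of the standard strategy for extending a Priestley-type duality by one additional operation---verify the functors are well defined on the enriched categories, then show the Cornish--Fowler units $\Phi_{\A}$ and $\Psi_{\X}$ respect the extra structure---and this is indeed the route taken in~\cite{Od10}, which relies on the twist representation (Theorem~\ref{Prop:OdRep}) to control the behaviour of $\to$ via the Heyting quotient $B(\A)$ and the identification of $X^1(\A)$ with the Esakia dual of $B(\A)$.
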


By specialising Theorem~\ref{th:caeqnfnfs} one obtains a duality for N3-lattices: as expected, the corresponding spaces
are precisely those N4-spaces of type $
\la X, X^1, \tau, \leq, g \ra $ that satisfy $ X^+ \cup X^- = X $.


We conclude the section by giving a few more details on the alternative approach to duality  (called ``two-sorted duality'' in~\cite{rivieccio2021duality}) 
that was mentioned earlier. 
We restrict our attention to \emph{bounded} 
$\PN$-lattices, corresponding to triples $(\A[H],  F, I)$
such that $\A[H]$ is a Heyting algebra (rather than a Brouwerian lattice), but 
the result is established for the unbounded case as well 
in~\cite{JaRi14}. 
This duality 
also builds on Priestley and Esakia but (unlike Odintsov's) it does not rely on the results of
Cornish and Fowler. 

The category $\NF$ of $\PN$-lattices is defined as before; moreover, 
we have a category $\Tw$ whose objects are triples $( \A[H],  F, I )$
such that $\A[H]$ is a Heyting algebra, $F \subseteq H$ is a dense filter of 
$\A[H]$ and $I \subseteq H$  a lattice ideal. 
A morphism  between objects $( \A[H_1],  F_1, I_1 ), ( \A[H_2],  F_2, I_2 ) \in \Tw$ is
a  Heyting algebras homomorphism $h \colon H_1 \to H_2$ that preserves the filter and the ideal,
that is, 
we require
$h[F_1] \subseteq F_2$ and $h[I_1] \subseteq I_2$.

One can then define functors $N \colon \Tw \to \NF$ and $T \colon \NF \to \Tw$
that establish a covariant equivalence between both categories. 
So far as objects go, given $( \A[H],  F, I ) \in \Tw$, we follow Definition~\ref{Def:twi}, letting
 $N(\A[H],  F, I) : = Tw( \A[B],  F, I )$.
Conversely, given a bounded  $\PN$-lattice $\A \in \NF$, we let 
$T(\A) := ( B(\A),  F(\A), I(\A) )$ be given as in 
Theorem~\ref{Prop:OdRep}.
As for  morphisms, given 
objects $( \A[H_1],  F_1, I_1 ), ( \A[H_2],  F_2, I_2 ) \in \Tw$ and a $\Tw$-morphism $h$ between them,
we let $N(h) \colon Tw ( \A[H_1],  F_1, I_1 ) \to Tw ( \A[H_2],  F_2, I_2 )$ be given by
$N(h) (\la a,b \ra) := \la h(a), h(b) \ra$.
Conversely, to a bounded $\PN$-lattice homomorphism
$g \colon \A_1 \to \A_2$ we associate the  $\Tw$-morphism
$T(g) \colon ( B(\A_1),  F(\A_1), I(\A_1) ) \to ( B(\A_2),  F(\A_2), I(\A_2) )$
given by $T(g) (a /{\Xi_1}) := g(a) /{\Xi_2} $,
where $\Xi_1$ and $\Xi_2$ are the equivalence relations
defined on $\A_1$ and $\A_2$ according to
Definition~\ref{Def:N4}.

\begin{theorem}[\cite{JaRi14},~Thm.~2.6]
The category $\NF$
of bounded $\PN$-lattices with homomorphisms is 
  equivalent to the category $\Tw$ 
via the above-defined functors $N$ and $T$.
\end{theorem}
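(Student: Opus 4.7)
The plan is to establish the equivalence by constructing natural isomorphisms $\eta \colon \operatorname{Id}_{\NF} \Rightarrow N \circ T$ and $\varepsilon \colon \operatorname{Id}_{\Tw} \Rightarrow T \circ N$. First I would verify that $N$ and $T$ are well-defined functors. For $T$, given a bounded $\PN$-lattice homomorphism $g \colon \A_1 \to \A_2$, the map $T(g)(a / \Xi_1) := g(a) / \Xi_2$ is well-defined because $g$ preserves $\land, \lor, \to$ and hence the relations $\preceq$ and $\Xi$; it is a Heyting algebra homomorphism by construction of the quotient; and it maps $F(\A_1)$ into $F(\A_2)$ and $I(\A_1)$ into $I(\A_2)$ because $F(\A) = \pi[A^+]$, $I(\A) = \pi[A^-]$, and $g$ preserves $\lor, \land$ and $\nnot$. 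For $N$, given a morphism $h \colon (\A[H_1], F_1, I_1) \to (\A[H_2], F_2, I_2)$, the conditions $h[F_1] \subseteq F_2$ and $h[I_1] \subseteq I_2$ guarantee that $\la h(a), h(b) \ra$ lies in the universe of $N(\A[H_2], F_2, I_2)$, while preservation of the operations of Definition~\ref{Def:twi} is immediate from the fact that $h$ is a Heyting algebra homomorphism. Functoriality is a routine pointwise check.

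For the natural transformation $\eta$, I would take $\eta_{\A} \colon \A \to N(T(\A))$ to be the map $a \mapsto \la a / \Xi, \, \nnot a / \Xi \ra$ of Theorem~\ref{Prop:OdRep}, which is already known to be a $\PN$-lattice isomorphism. Naturality of $\eta$ reduces to the pointwise computation $N(T(g))(\la a / \Xi_1, \nnot a / \Xi_1 \ra) = \la g(a) / \Xi_2, \nnot g(a) / \Xi_2 \ra = \eta_{\A_2}(g(a))$, which holds because $g$ preserves $\nnot$.

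The main obstacle, and the most delicate computation, lies in the construction of $\varepsilon$. Given $(\A[H], F, I) \in \Tw$, I need to identify $T(N(\A[H], F, I))$ with $(\A[H], F, I)$. The key remark is that on $Tw(\A[H], F, I)$ one has $\la a, b \ra \to \la c, d \ra = \la a \to c, a \land d \ra$ and $\abs{\la a, b \ra} = \la 1, a \land b \ra$, whence $\la a, b\ra \preceq \la c, d\ra$ is equivalent to $a \leq c$ in $\A[H]$; consequently $\Xi$ identifies precisely those pairs sharing the same first coordinate, and the first projection descends to a Heyting algebra isomorphism $B(Tw(\A[H], F, I)) \to \A[H]$.

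It remains to verify that this isomorphism carries $F(Tw(\A[H], F, I))$ onto $F$ and $I(Tw(\A[H], F, I))$ onto $I$. Any element of $A^+$ has the form $\la a, b \ra \vee \nnot \la a, b \ra = \la a \vee b, a \land b \ra$, so its first coordinate $a \vee b$ automatically lies in $F$; conversely, each $f \in F$ is realised as the first coordinate of $\la f, 0 \ra \vee \nnot \la f, 0 \ra = \la f, 0 \ra \in A^+$, which is valid because $0 \in I$. The argument for $I$ is dual, using $\la i, 1 \ra \in Tw(\A[H], F, I)$ (valid because $1 \in F$) whose meet with its strong negation has first coordinate $i$. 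Thus $\varepsilon_{(\A[H], F, I)}$ is a $\Tw$-isomorphism; naturality follows at once from the pointwise definitions of $N$ and $T$ on morphisms, and the equivalence of categories follows.
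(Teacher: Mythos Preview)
The paper does not actually prove this theorem; it is stated as a citation of~\cite[Thm.~2.6]{JaRi14}, with the surrounding text only setting up the functors $N$ and $T$. Your proposal is therefore not competing against an in-paper argument, and the approach you outline---constructing natural isomorphisms $\eta$ and $\varepsilon$, with $\eta$ supplied by Odintsov's representation (Theorem~\ref{Prop:OdRep}) and $\varepsilon$ obtained by identifying the quotient $B(Tw(\A[H],F,I))$ with $\A[H]$ via the first projection---is correct and is the standard route.

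One small point deserves to be made explicit. When you assert that the first projection ``descends to a Heyting algebra isomorphism $B(Tw(\A[H],F,I)) \to \A[H]$'', surjectivity is not automatic from your characterisation of $\Xi$: you need that every $a \in H$ occurs as the first coordinate of some pair in the twist-structure. This holds because, $\A[H]$ being a Heyting algebra (we are in the bounded case), the pair $\la a, a \to 0 \ra$ always lies in the universe: $a \lor (a \to 0)$ is dense and hence in $F$, while $a \land (a \to 0) = 0 \in I$. With this in hand, your computations for $F$ and $I$ (using the witnesses $\la f, 0\ra$ and $\la i, 1\ra$, which are legitimate precisely because the algebra is bounded) go through, and the argument is complete.
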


Now it suffices to invoke Esakia's results 
on Heyting algebras to obtain a duality between the category $\Tw$ (and, hence, also $\NF$)
and a category of enriched topological spaces defined as follows. 

Given a Priestley space  $\la X, \tau, \leq \ra$,
denote by $\max(X)$  the set of points of $X$ that are maximal
with respect to $\leq$. 
Following~\cite[Def.~3.3]{JaRi14}, we define an \emph{NE-space} as a tuple
$\X =  \la X, \tau, \leq, C, O \ra$ such that $\la X, \tau, \leq \ra$ is an Esakia space,
$C \subseteq X$ is a closed set such that $\max(X) \subseteq C$ and 
$O \subseteq X$ is an open up-set. 
To an NE-space $\X = \la X, \tau, \leq, C, O \ra$ we associate the triple
$L(\X) := (L(X), F_C, I_O)$, where $L(X)$ is the Heyting algebra of clopen up-sets
corresponding to the Esakia space $\la X, \tau, \leq \ra$,
$F_C := \{ U \in L(\X) :  C \subseteq U \}$
and
$I_O := \{ U \in L(\X) :  U \subseteq O \}$.
Since $F_C$ is a dense filter and $I_O$  an ideal of $L(\X)$,
we have that $(L(\X), F_C, I_O) \in \Tw$.
Conversely, to each  $( \A[H],  F, I ) \in \Tw$ we associate the
NE-space 
$X( \A[H],  F, I ) : = \la X (\Al[L]), \tau, \subseteq, C_F, O_I \ra
$
such that
$ \la X (\Al[L]), \tau, \subseteq \ra$ is the Esakia space dual
 to the Heyting algebra $\A[H]$,
$C_F : = \bigcap \{ \phi (a) : a \in F \}$
and 
$O_I : = \bigcup \{ \phi (a) : a \in I \}$.

A morphism  between NE-spaces
$ \la X_1, \tau_1, \leq_1, C_1, O_1 \ra$
and
$ \la X_2, \tau_2, \leq_2, C_2, O_2 \ra$
is defined as an Esakia function $f \colon X_1 \to X_2 $
that satisfies 
$f[C_1 ] \subseteq C_2$
and 
$f^{-1}[C_2] \subseteq O_1$.
The dual $L(f)$ of a morphism of NE-spaces
is defined as in Priestley duality, and is readily verified
to be a morphism in the category $\Tw$.
Conversely, the dual $X(h)$ of a $\Tw$-morphism $h$, once again defined as in Priestley duality,
is a morphism of NE-spaces. We thus reach the following equivalence results.

\begin{theorem}\cite[Thm.~3.11]{JaRi14} The category $\NE$ of NE-spaces 
is dually equivalent to the category $\Tw$ 
via the above-defined functors $L$ and $X$.
\end{theorem}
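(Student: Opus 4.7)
The plan is to leverage Esakia duality as the scaffolding: the underlying contravariant equivalence between Heyting algebras (with homomorphisms) and Esakia spaces (with Esakia functions) is already known, so the proof reduces to showing that (i) the extra data $(F,I)$ on the algebraic side corresponds bijectively to the extra data $(C,O)$ on the spatial side on objects, (ii) the extra morphism conditions match under $L$ and $X$, and (iii) the natural isomorphisms from Esakia duality restrict. The overall structure is then four routine verifications pivoting on one substantive lemma.

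First I would establish the key lemma: for a Heyting algebra $\A[H]$ with Esakia dual $X(\A[H])$, the dense lattice filters of $\A[H]$ are in inclusion-preserving bijection with the closed subsets of $X(\A[H])$ containing $\max(X(\A[H]))$. The maps $F \mapsto C_F = \bigcap\{\phi(a):a\in F\}$ and $C \mapsto F_C = \{U \in L(\X):C\subseteq U\}$ are the candidates. The inclusion $\max(X(\A[H])) \subseteq C_F$ follows from the fact that density of $F$ is equivalent to the quotient $\A[H]/\theta_F$ being a generalised Boolean algebra, so that the prime filters containing $F$ correspond to the ultrafilters of that Boolean quotient and therefore include all maximal prime filters of $\A[H]$. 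Conversely, if $\max(X) \subseteq C$, density of $F_C$ is seen by checking that every clopen up-set of the form $U \cup (U \to V)$ contains $\max(X)$ (using the Esakia description of $U \to V$) and therefore contains $C$. Round-trip identities $F = F_{C_F}$ and $C = C_{F_C}$ then follow from Priestley separation. The parallel bijection between lattice ideals $I$ of $\A[H]$ and open up-sets $O$ of $X(\A[H])$, given by $I \mapsto O_I = \bigcup\{\phi(a):a\in I\}$ and $O \mapsto I_O$, is easier, since $O_I$ is open as a union of clopens and an up-set as a union of up-sets, while $I_O$ is immediately seen to be closed under finite joins and downward closed under inclusion.

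Next I would handle the morphism side. A $\Tw$-morphism $h \colon (\A[H_1],F_1,I_1) \to (\A[H_2],F_2,I_2)$ has Esakia dual $X(h) \colon X(\A[H_2]) \to X(\A[H_1])$, $x \mapsto h^{-1}[x]$, which is automatically an Esakia function. If $x \in C_{F_2}$, i.e.~$F_2 \subseteq x$, then $F_1 \subseteq h^{-1}[F_2] \subseteq h^{-1}[x] = X(h)(x)$, so $X(h)[C_{F_2}] \subseteq C_{F_1}$; the condition $h[I_1] \subseteq I_2$ gives the corresponding condition on $X(h)$ relating $O_{I_1}$ and $O_{I_2}$ by a dual computation using the ideal side of the key lemma. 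Conversely, given an NE-morphism $f$, the map $L(f) = f^{-1}$ is a Heyting homomorphism by Esakia duality; the condition $f[C_1] \subseteq C_2$ yields $L(f)[F_{C_2}] \subseteq F_{C_1}$, while the second NE-morphism condition combined with the ideal bijection yields $L(f)[I_{O_2}] \subseteq I_{O_1}$. Functoriality of $L$ and $X$ is inherited verbatim from Esakia duality.

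Finally, the unit and counit $\Phi_{\A[H]}$ and $\Psi_{\X}$ are those of Esakia duality and are already Heyting/Esakia isomorphisms; it remains to check that they identify $F$ with $F_{C_F}$, $I$ with $I_{O_I}$, and dually on the spatial side, all of which are immediate from the round-trip identities in the key lemma. The main obstacle is precisely that lemma, and specifically the step linking the algebraic notion of density (phrased via Boolean quotients) with the topological condition $\max(X(\A[H])) \subseteq C_F$; once this bridge is in place, the rest of the argument is a routine diagram-chase against Esakia's theorem.
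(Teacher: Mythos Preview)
Your overall strategy---carry Esakia duality as a black box, then verify that the extra data $(F,I)$ and $(C,O)$ correspond under the functors, that the extra morphism conditions match, and that the Esakia unit and counit respect the added structure---is exactly the approach of~\cite{JaRi14}; the survey itself does not reproduce the proof, so there is no alternative decomposition to compare against.

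There is, however, a genuine error in your key lemma. You claim $\max(X(\A[H])) \subseteq C_F$ for $F$ dense, arguing that the prime filters containing $F$ correspond to ultrafilters of the Boolean quotient $\A[H]/\theta_F$ ``and therefore include all maximal prime filters of $\A[H]$''. That last step is a non sequitur: the correspondence shows that every prime filter containing $F$ \emph{is} maximal (its image in the Boolean quotient is an ultrafilter), not that every maximal prime filter of $\A[H]$ contains $F$. The correct inclusion is $C_F \subseteq \max(X)$: if $P$ is prime with $D(\A[H]) \subseteq F \subseteq P$ and $a \notin P$, then the dense element $a \lor \neg a$ lies in $P$, so primeness forces $\neg a \in P$ and $P$ is maximal. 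Your ``conversely'' paragraph contains the matching slip---from $\max(X) \subseteq U \cup (U \to V)$ you conclude ``and therefore contains $C$'', but that inference needs $C \subseteq \max(X)$. A Boolean $\A[H]$ already refutes the bijection as you state it (there every filter is dense, yet the only closed set containing $\max(X)=X$ is $X$ itself). The confusion is evidently induced by a misprint in the survey: the NE-space condition should read $C \subseteq \max(X)$, and the second morphism clause should be $f^{-1}[O_2] \subseteq O_1$ rather than $f^{-1}[C_2] \subseteq O_1$. With these inclusions corrected, your plan goes through essentially as written.
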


\begin{theorem}\cite[Cor.~3.12]{JaRi14}
The category $\NF$
of bounded $\PN$-lattices with homomorphisms is dually
  equivalent to the category 
  $\NE$ of NE-spaces 
  via the   functors  $X \circ T$ and $N \circ L $.
\end{theorem}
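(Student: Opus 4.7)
The plan is to obtain this corollary by straightforward composition of the two equivalences established immediately above, since the heavy lifting has already been done in the preceding theorems.

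First I would observe that we have in hand two equivalences of categories: a \emph{covariant} equivalence $N \colon \Tw \to \NF$ with quasi-inverse $T \colon \NF \to \Tw$ (from the Theorem based on~\cite{JaRi14},~Thm.~2.6), and a \emph{contravariant} (dual) equivalence $L \colon \NE \to \Tw$ with quasi-inverse $X \colon \Tw \to \NE$ (from~\cite[Thm.~3.11]{JaRi14}). Since the composition of a covariant equivalence with a contravariant equivalence is again a contravariant equivalence, the composed functors $X \circ T \colon \NF \to \NE$ and $N \circ L \colon \NE \to \NF$ are candidates for establishing the desired duality.

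To turn this into a proof, I would verify  that the two composites are quasi-inverses of one another, by chasing the unit and counit natural isomorphisms. Concretely: from the equivalence $\NF \simeq \Tw$ we have natural isomorphisms $\mathrm{Id}_{\NF} \cong N \circ T$ and $\mathrm{Id}_{\Tw} \cong T \circ N$, and from the duality $\NE \simeq^{\mathrm{op}} \Tw$ we have $\mathrm{Id}_{\Tw} \cong L \circ X$ and $\mathrm{Id}_{\NE} \cong X \circ L$. Pasting these together yields
\[
(N \circ L) \circ (X \circ T) \;=\; N \circ (L \circ X) \circ T \;\cong\; N \circ \mathrm{Id}_{\Tw} \circ T \;=\; N \circ T \;\cong\; \mathrm{Id}_{\NF},
\]
and symmetrically $(X \circ T) \circ (N \circ L) \cong \mathrm{Id}_{\NE}$. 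The naturality of the pasted isomorphism is immediate from the naturality of its constituent pieces, as functors preserve natural transformations under whiskering.

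The only minor subtlety — hardly an obstacle — is keeping track of the variance so that morphisms are sent in the correct direction: given a bounded $\PN$-lattice homomorphism $g \colon \A_1 \to \A_2$, the associated map $(X \circ T)(g) \colon (X \circ T)(\A_2) \to (X \circ T)(\A_1)$ in $\NE$ is obtained by first taking $T(g)$, which is a $\Tw$-morphism from $T(\A_1)$ to $T(\A_2)$, and then applying the contravariant $X$. Everything else is purely formal, so no further verification beyond pointing to the two cited theorems is needed.
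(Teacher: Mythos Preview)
Your proposal is correct and matches the paper's approach exactly: the paper states this result as an immediate corollary of the preceding two theorems (the covariant equivalence $\NF \simeq \Tw$ and the dual equivalence $\Tw \simeq^{\mathrm{op}} \NE$), obtained by composing the functors, and offers no further argument. Your explicit verification that the composites are quasi-inverses via pasting the unit/counit isomorphisms simply spells out what the paper leaves implicit.
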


Having laid down the details 
of both alternative dualities
for bounded
$\PN$-lattices, we can close the section with a  comparative remark. 
As mentioned earlier, an advantage of the two-sorted duality
is that it allows one to work within the more well-known setting of Esakia spaces,
without any need to refer to Cornish and Fowler's results on De Morgan algebras;
while a drawback, one might argue, is that the duality is indirect in that it relies essentially
on the covariant equivalence between $\NF$
and  $\Tw$, and thus on the twist representation for $\PN$-lattices. However,
an inspection of the proofs contained in~\cite{Od10} reveals
that Odintsov's duality also relies heavily (if more covertly) on the twist representation.

Another feature that makes 
Odintsov's proposal  somewhat unusual among
Priestley-type dualities is the following. 
As is well known, it is possible to give a logical reading of Priestley-type dualities:
since the points of the space dual to an algebra (say, a Heyting or a Boolean algebra) are logical filters, they 
correspond to (prime) theories 
of (intutionistic, or classical) logic; the same continues to hold in the setting of the Cornish-Fowler duality for De Morgan lattices if we view the latter as an algebraic
semantics for the Belnap-Dunn logic~\cite{Font97}. This correspondence is not preserved in Odintsov's duality, because
the space dual to a  $\PN$-lattice consists of  filters of the first kind (which correspond to theories of Nelson's paraconsistent logic) as well as filters of the second kind (which do not). 
In contrast, the correspondence between points and logical theories is  restored by the duality of~\cite{JaRi14}, because
the poset of (prime) filters on the Heyting algebra $H(\A)$ associated to a given $\PN$-lattice $\A$ is isomorphic
to the poset of the (prime) filters of the first kind on $\A$. 

%
%
%
%
%
%
%

\section{Nelson and residuated lattices} \label{sec:n3}

\subsection{$\N$- and $\PN$-lattices as residuated algebras}
 \label{ss:nasres}

As mentioned  in Subsection~\ref{subsec:nel},
an $\N$-lattice may be  alternatively defined as: 
\begin{enumerate}[label=(\roman*)]
\item 
an involutive CIBRL $\A = \zseq{A; \land, \lor, *, \imp, 1, 0 }$
that further satisfies~\eqref{Eqn:Nel};
\item  an $\PN$-lattice  $\alg{A} = \zseq{A; \land, \lor, \to, \nnot}$, given as per Definition~\ref{Def:N4},
that further satisfies any of the properties in Proposition~\ref{Prop:NelWeakening} (this is Rasiowa's original definition as rephrased by Odintsov).
\end{enumerate}

The above correspondence amounts to a term equivalence between the two  classes of algebras. 
 To go from (i) to (ii), it suffices to let:
\begin{align*}
x \to y & := x \imp (x \imp y) \\ 
\nnot x & : = x \imp 0.
\end{align*}
For the other direction, one defines (letting $\abs{x} := x \imp x$):
\begin{align*}
x \imp y & := \lmb(x \to y \rmb) \land \lmb(\nnot y \to  \nnot x) \\
x * y & : = \nnot (x \imp \nnot y ) \\
1 & := \abs{x}  \\
0 & := \nnot 1. 
\end{align*}

As mentioned earlier, Spinks and Veroff~\cite{Spin18} have established a similar result for general $\PN$-lattices.
In this endeavour, the first challenge must have been to come up with a suitable alternative presentation for  $\PN$-lattices
as residuated structures. Note, for instance, that one cannot hope to be dealing with  \emph{residuated lattices} in the sense of~\cite{GaJiKoOn07}, for we know that an $\PN$-lattice may not have any term definable algebraic constant,
hence there may be no neutral element for  the $*$ operation.
Furthermore, it is not hard to check that 
the definition
$x \to y := x \imp (x \imp y)$ cannot possibly work in the setting of  general $\PN$-lattices; a more
involved term needs to be used,  significantly complicating the proof of term equivalence. Simpler terms and a simpler proof may be obtained if we allow ourselves to expand the language of $\PN$-lattices with a constant 
(to be interpreted as the neutral element of the monoid operation). Such a strategy, pursued in~\cite{Busa09},  is  unsatisfactory in that we are restricting our attention to a proper subclass -- which is not even a sub(quasi)variety -- of  $\PN$-lattices.

Spink and Veroff's solution to the first issue was to introduce the class of 
\emph{dimorphic paraconsistent Nelson RW-algebras}. Here,  ``RW'' is a reference
to Brady's contraction-free relevance logic RW~\cite{Brad91}, and in fact 
Spinks and Veroff's term equivalence result entails that paraconsistent Nelson logic may be viewed as  an axiomatic extension of RW
(for further background on the involved terminology, we refer the interested reader to~\cite{Spin18}).
We may  gradually approach their definition through the following observations.
Starting with an $\PN$-lattice  $\alg{A} = \zseq{A; \land, \lor, \to, \nnot}$  
given as per Definition~\ref{Def:N4}, let
$$
x \imp y := \lmb(x \to y \rmb) \land \lmb(\nnot y \to  \nnot x)
$$ 
and 
$$x * y  : = \nnot (x \imp \nnot y ).$$ 
In this way one  may be reassured  to verify  that 
the pair $(*, \imp)$ satisfies the residuation property introduced in Subsection~\ref{subsec:nel}.
One may furthermore show that:
\begin{enumerate}[label=(\roman*)]
\item The algebra  $ \zseq{A; \land, \lor, *, \imp }$ is the $1$-free subreduct of a commutative residuated lattice.
\item The identity $x \imp y \approx \nnot y \imp \nnot x$ (well known to hold on all involutive residuated lattices) is satisfied.
\end{enumerate}
The class of algebras satisfying the first of the above items has been characterised as the variety of
\emph{adjunctive residuated lattice-ordered semigroups}. These are algebras $ \zseq{A; \land, \lor, *, \imp }$
of type
$\zseq{2,2,2,2}$
such that $ \zseq{A; \land, \lor }$ is a lattice (with an order $\leq$), 
the operation $*$ is associative and commutative,
and,
for all $a, b, c \in A$:
\begin{enumerate}[label=(\roman*)]
\item if $a \leq b$, then $a * c \leq b * c$ 
\hfill (compatibility)
\item 
$a * b \leq c$ iff $a \leq b \imp c$ \hfill (residuation)
\item $(\abs{a} \land \abs{b}) \imp c \leq c$ \hfill (adjunction).
\end{enumerate}
It is known that the above conditions may be replaced by identities; hence adjunctive residuated lattice-ordered semigroups form an equational class.

Summing up the previous observations, we have that every $\PN$-lattice 
$\alg{A} = \zseq{A; \land, \lor, \to, \nnot}$ 
has a term definable reduct which is the subreduct of a (distributive) residuated lattice;
moreover, the operation $\nnot$ is a \emph{compatible} involution, i.e.~we have
 $a \imp b = \nnot b \imp \nnot a$ and $\nnot \nnot a = a $ for all $a,b \in A$.
Even all these properties taken together, however, are still not enough to provide an alternative 
 characterisation of $\PN$-lattices. To achieve this, we need one more identity, and some ingenuity. 
Defining:
\begin{equation}
\label{eq:pcn}
x \to' y := (x \land \abs{y}) \imp ((x \land \abs{y}) \imp y)
\end{equation}
%
we   observe that every $\PN$-lattice satisfies 
the identity $  x \to' y \approx x \to y $,
suggesting one   may define in this  way the weak implication
in terms of the strong one. We further note that the following
identity, called
\emph{Internal Weakening} in~\cite[p.~332]{Spin18}, is also satisfied:
$$
(x *y ) \to' x \approx \abs{(x *y ) \to' x}.
$$



Let us define $ \A = \zseq{A; \land, \lor, *, \imp, \nnot }$ to be an algebra of type
$\zseq{2, 2, 2, 2, 1}$ such that:
\begin{enumerate}[label=(\roman*)]
\item the reduct $ \zseq{A; \land, \lor, *, \imp }$ is a distributive and  adjunctive residuated lattice-ordered semigroup,
\item the operation  $\nnot$ is a {compatible} involution,
\item letting 
$$x \to y := (x \land \abs{y}) \imp ((x \land \abs{y}) \imp y),
$$
we have:
$$
x \imp y \approx \lmb(x \to y \rmb) \land \lmb(\nnot y \to  \nnot x) 
\quad  \text{and} \quad (x *y ) \to x \approx \abs{(x *y ) \to x}.
$$
\end{enumerate}
Spinks and Veroff~\cite{Spin18} dub such a structure a
\emph{dimorphic paraconsistent Nelson RW-algebra,}
and proceed to show that, from every such algebra $ \A = \zseq{A; \land, \lor, *, \imp, \nnot }$,
we may obtain an $\PN$-lattice $ \zseq{A; \land, \lor, \to, \nnot }$ by the above prescriptions. 
Since every $\PN$-lattice also gives rise to a dimorphic paraconsistent Nelson RW-algebra as described above,
we have mutually inverse translations that establish a term equivalence, as anticipated.

Spinks and Veroff's result has several consequences, both logical and algebraic. 
On a logical level, it allows us to
view the paraconsistent system of Nelson as a contraction-free relevance logic,
and thus to place it within the taxonomy of well-known relevance systems. 
We have observed earlier that Brady's logic RW may be viewed as a weakening of paraconsistent Nelson logic;   we may now
further verify that several other systems can be obtained as axiomatic extensions of Nelson's -- for instance
the system BN of Slaney, 
the three-valued relevance logic with mingle RM3, 
the  nilpotent minimum logic NM of Esteva and Godo 
(see~\cite[p.~328]{Spin18} for further examples and background). Obviously the original logic of Nelson, as well as all its strengthenings (e.g.~the three-valued logic of {\L}ukasiewicz), are also  extensions of paraconsistent Nelson. 

On an algebraic level, the term equivalence  allows one to apply or adapt to the setting of ($\N$- and) $\PN$-lattices results that are known to hold for residuated structures. We have, for instance,
that the variety of $\PN$-lattices is congruence-permutable~\cite[Thm.~4.24]{Spin18}, therefore also arithmetical,
and enjoys equationally definable principal congruences. 
Spinks and Veroff also obtain further information on definable term functions: they show, for instance, that $\PN$-lattices
possess  a ternary and a quaternary deductive term in the sense of Blok and Pigozzi~\cite{Blok94a},
and identify the discriminator varieties of $\PN$-lattices. All these results obviously specialise to $\N$-lattices as well.

\subsection{Further characterisations of $\N$-lattices}
 \label{ss:fuca}


We have mentioned in Subsection~\ref{ss:pcl} that, in contrast to the case of $\PN$-lattices, for $\N$-lattices
a concise and elegant equational axiomatisation is available. This is due to 
Diana Brignole~\cite{Brig69}, who built on earlier work by Antonio Monteiro. In Brignole's presentation,
an  $\N$-lattice is defined as an algebra $\A = \zseq{A; \land, \lor, \to, \nnot, 1 }$ 
of type $\zseq{2,2,2,1, 0}$ that satisfies the following identities:

\begin{enumerate}[label=(N3.\roman*), leftmargin=*, itemsep=2pt]
\item \label{Itm:N1eq} $x \lor 1 \approx 1 $
 \item \label{Itm:N2eq} $x \land (x \lor y) \approx x $
\item \label{Itm:N3eq} $x \land (y \lor z) \approx (x \land y) \lor (x \land z)$

\item \label{Itm:N4eq}  $\nnot \nnot x \approx x $ 
\item \label{Itm:N5eq}  $\nnot ( x \land y) \approx \nnot x  \lor \nnot y $
\item \label{Itm:N6eq} $x \land \nnot x  \leq y \lor \nnot y $
\item \label{Itm:N7eq} $x \to x \approx 1 $
\item \label{Itm:N8eq} $\nnot x \lor y \leq x \to y $ 
\item \label{Itm:N9eq} $x \land (\nnot x \lor y) \approx x \land (x \to y)$
 \item \label{Itm:N10eq}  $x \to (y \land z) \approx (x \to y) \land (x \to z)$
\item \label{Itm:N11eq} $ x \to (y \to z) \approx (x \land y) \to z $.
\end{enumerate}

The preceding identities entail that, upon letting $0 := \nnot 1 $, 
the structure
$\zseq{A; \land, \lor, \nnot, 0,  1 }$ is a Kleene algebra; thus, conversely, a Nelson algebra
may be defined as a Kleene algebra structurally expanded with a binary connective $\to$
(of weak implication)
satisfying  identities~\ref{Itm:N7eq} to~\ref{Itm:N11eq}.

A number of  new  characterisations of $\N$-lattices in terms of the strong implication have been presented in the paper~\cite{Nasc1z},
which is probably  the latest
study of $\N$-lattices  from the standpoint of residuated lattices. We conclude the section by
recalling a few of them. 

Let $\A = \zseq{A; \land, \lor, *, \imp, 1, 0 }$ be a 
 \emph{compatibly involutive}
CIBRL, i.e.~one that satisfies the identities $x \imp y \approx \nnot y \imp \nnot x$
and $\nnot \nnot x \approx x $ (where, as before
$\nnot x : = x \imp 0$). Abbreviating $x^2 := x * x$ and $x^3 := x * x * x$,
we can write the important  \emph{3-potency} identity: 
$$
x^3 \approx x^2. 
$$
Let us also abbreviate
$x \to y := x^2 \imp y$.
Consider, as in Subsection~\ref{ss:pcl}, a relation $\preceq$ defined, for all $a,b \in A$, by
      $a \preceq b$ iff $a \to b = 1$. Note that, on a CIBRL $\A$, we could equivalently define       $a \preceq b$ iff 
      $a^2 \leq b$. If $\A$ is moreover {3-potent}, 
      then  $\preceq $ is a quasiorder which may be used to obtain a Heyting algebra quotient (cf.~\cite[p.~2309]{Nasc1z} and Definition~\ref{Def:N4} above).
      
The first characterisation we present allows us to view the property of ``being Nelson'' as an order-theoretic one.      

\begin{proposition}\cite[Thm.~4.11]{Nasc1z} \label{Prop:eqord}
Let $\A$ be a 3-potent compatibly involutive CIBRL (with a lattice order $\leq$), and let  $\preceq$ be given as above.
Further define $a \preceq' b $ iff $\nnot b \preceq \nnot a$ for all $a,b \in A$. The following are equivalent:
\begin{enumerate}[label={\rm (\alph*)}]
\item $\A$ is an $\N$-lattice.
\item The relation $\preceq \cap \preceq'$ is a partial order  coinciding with $\leq$.
\item The relation $\preceq \cap \preceq'$ is a partial order.
\item The relation $\preceq \cap \preceq'$ is antisymmetric. 
\end{enumerate}

\end{proposition}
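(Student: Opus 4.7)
The four conditions form a cycle, with (b) $\Rightarrow$ (c) $\Rightarrow$ (d) immediate from the definitions of partial order and antisymmetry. Two substantive implications remain: (a) $\Rightarrow$ (b) and (d) $\Rightarrow$ (a).

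The implication (a) $\Rightarrow$ (b) follows directly from the Nelson identity. Since $\alg{A}$ satisfies $x \imp y \approx (x \to y) \land (\nnot y \to \nnot x)$, one has $a \imp b = 1$ iff both $a \to b = 1$ and $\nnot b \to \nnot a = 1$. Combining this with the fact that $a \leq b$ iff $a \imp b = 1$ in any CIBRL yields $a \leq b$ iff $a \preceq b$ and $a \preceq' b$. Hence $\preceq \cap \preceq'$ coincides with the lattice order $\leq$ and is in particular a partial order.

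For (d) $\Rightarrow$ (a), set $u := (x \to y) \land (\nnot y \to \nnot x)$ and $v := x \imp y$. The strategy is to show that both $u (\preceq \cap \preceq') v$ and $v (\preceq \cap \preceq') u$ hold; antisymmetry then forces $u = v$, which is exactly the Nelson identity and makes $\alg{A}$ an $\N$-lattice. The easy direction is $v (\preceq \cap \preceq') u$: the inequality $v \leq u$ holds in every CIBRL (this is the ``easy half'' of the Nelson identity recorded in Subsection~\ref{subsec:nel}), from which $v^2 \leq v \leq u$ gives $v \preceq u$, and the order-reversing compatible involution gives $\nnot u \leq \nnot v$, hence $v \preceq' u$.

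The hard direction is $u (\preceq \cap \preceq') v$, which unfolds into the two ``hard-half'' inequalities $u^2 \leq v$ (for $u \preceq v$) and $(\nnot v)^2 \leq \nnot u$ (for $u \preceq' v$). The plan is to derive them by combining three ingredients: the bounds $u \leq x^2 \imp y$ and $u \leq (\nnot y)^2 \imp \nnot x$ coming from the definition of $u$; the residuated-lattice estimate $(a \imp b) * (c \imp d) \leq (a * c) \imp (b * d)$; and 3-potency, which collapses $x^n$ to $x^2$ for $n \geq 2$ and, through compatible involution, also $(\nnot y)^n$ to $(\nnot y)^2$. I expect the main technical obstacle to be coordinating these pieces so that the final residuation step delivers $x \imp y$ as the upper bound -- this is precisely the juncture at which 3-potency (as opposed to mere involutivity) is essential, and where the dual inequality $(\nnot v)^2 \leq \nnot u$, handled via $u \leq \nnot v \imp v$, requires the symmetric calculation.
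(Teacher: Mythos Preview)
The paper supplies no proof here; it merely records the citation \cite[Thm.~4.11]{Nasc1z}. Your treatment of (a) $\Rightarrow$ (b) $\Rightarrow$ (c) $\Rightarrow$ (d) and of the easy half $v \mathrel{(\preceq \cap \preceq')} u$ in (d) $\Rightarrow$ (a) is fine.

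The gap is in the ``hard direction'' $u \mathrel{(\preceq \cap \preceq')} v$, which you do not actually establish. For your strategy to succeed, the inequality $u^{2}\le v$ (equivalently $u^{2}*x\le y$) must hold in \emph{every} 3-potent compatibly involutive CIBRL, not just in $\N$-lattices. But the three ingredients you list combine only to the trivial bound $u^{2}\le u$. Writing $p:=x^{2}\imp y$ and $q:=(\nnot y)^{2}\imp\nnot x$, one has $u^{2}\le p*q$; using compatible involution to rewrite $q=x\imp((\nnot y)\imp y)$, the estimate $(a\imp b)*(c\imp d)\le(a*c)\imp(b*d)$ together with 3-potency and integrality gives
\[
p*q\;\le\;x^{3}\imp\bigl(y*((\nnot y)\imp y)\bigr)\;=\;x^{2}\imp\bigl(y*((\nnot y)\imp y)\bigr)\;\le\;x^{2}\imp y\;=\;p,
\]
and the symmetric manipulation gives $p*q\le q$. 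So one recovers only $u^{2}\le p\wedge q=u$, not $u^{2}\le x\imp y$. Your sketch does not indicate how to escape this circularity, and the remark that ``coordinating these pieces'' is ``the main technical obstacle'' is exactly right: the obstacle is genuine, and it is not clear the approach can be completed as stated.

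A route that does close is to reduce (d) to the quasi-identity of Proposition~\ref{Prop:eq3pot}. If $a^{2}=b^{2}$ and $(\nnot a)^{2}=(\nnot b)^{2}$, then $a^{2}\le b$, $b^{2}\le a$, $(\nnot b)^{2}\le\nnot a$ and $(\nnot a)^{2}\le\nnot b$, so $a\mathrel{(\preceq\cap\preceq')}b$ and $b\mathrel{(\preceq\cap\preceq')}a$; antisymmetry then forces $a=b$. Since 3-potency is assumed, Proposition~\ref{Prop:eq3pot} yields (a). This shifts the substantive work to that proposition, which the survey likewise only cites.
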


The following result  is instead purely algebraic, and gives us two equivalent alternatives to~\eqref{Eqn:Nel}.

\begin{proposition}\label{Prop:synt}
Let $\A$ be a compatibly involutive CIBRL. The following are equivalent:
\begin{enumerate} [label={\rm (\alph*)}]
\item $\A$ is an $\N$-lattice.
\item $\A$ satisfies  $ x* y \approx (x^2 * y) \lor (x * y^2) $ \hfill \cite[Prop.~4.1]{Nasc1z}.
\item $\A$ satisfies   $  x \approx x^2 \lor (x \land \nnot x) $\hfill \cite[Thm.~6.1]{Nasc1z}. 
\end{enumerate}
\end{proposition}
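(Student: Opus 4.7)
The plan is to prove each biconditional by splitting into a forward direction, which exploits the twist-structure representation (Theorem~\ref{Prop:OdRep}), and a backward direction, which uses the order-theoretic framework provided by Proposition~\ref{Prop:eqord}.

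For the forward directions (a)$\Rightarrow$(b) and (a)$\Rightarrow$(c), I would reduce the verification of the two identities to a calculation on a generic $\N$-lattice twist-structure $Tw(\A[B], F, \{0\})$ over a Heyting algebra $\A[B]$. Using Definition~\ref{Def:twi} together with the derived definitions $x*y := \nnot(x\imp\nnot y)$ and $x\imp y := (x\to y) \land (\nnot y \to \nnot x)$, a short computation gives $\langle a, b\rangle * \langle c, d\rangle = \langle a\land c,\ (a\to d) \land (c\to b)\rangle$ and, exploiting that $b \leq \nnot a$ in $\A[B]$ (which holds because $a\land b = 0$ in any $\N$-lattice twist-structure), $\langle a,b\rangle^2 = \langle a, \nnot a\rangle$. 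The key observation for (b) is then $(a\to d)\land(c\to b) \leq \nnot(a\land c)$, which follows from $a\land c\land(a\to d)\land(c\to b) \leq a\land d\land c\land b = 0$; this forces the second coordinates of $x^2*y \lor x*y^2$ and $x*y$ to agree. For (c), one similarly computes $\langle a,\nnot a\rangle \lor \langle 0, a\lor b\rangle = \langle a,b\rangle$, the crucial simplification again being $\nnot a \land b = b$.

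For the converse directions, my first move would be to extract 3-potency $x^3 \approx x^2$, which comes for free from (b) by setting $x = y$ (giving $x^2 = x^3 \lor x^3 = x^3$), and which should also follow from (c) with a little manipulation. Once 3-potency is in place, the quasiorder $\preceq$ defined by $a\preceq b$ iff $a^2 \leq b$ becomes available, and Proposition~\ref{Prop:eqord} applies: it suffices to show that $\mathord{\preceq}\cap\mathord{(\preceq)^{-1}}$ is antisymmetric, i.e.~that $x^2 \leq y$, $y^2 \leq x$, $(\nnot y)^2 \leq \nnot x$ and $(\nnot x)^2 \leq \nnot y$ together force $x = y$. Alternatively, for (c)$\Rightarrow$(a), I would try to show that the quotient by $\Xi$ is a Heyting algebra, so that $\A$ embeds into a twist-structure over it via Theorem~\ref{Prop:OdRep} and hence is an $\N$-lattice.

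The hard part will be precisely this step: bridging the gap between the multiplicative content of (b) (or the lattice-theoretic content of (c)) and the implicational content of the Nelson identity. Residuating the hard half of~\eqref{Eqn:Nel} reduces it to: whenever $z * x^2 \leq y$ and $z*x*\nnot y \leq y$, one has $z*x \leq y$. A natural attempt would be to apply (b) to $a = z*x$ and $b = x \lor \nnot y$ and use distributivity of $*$ over $\lor$, obtaining $(z*x)*(x\lor\nnot y) \leq y$; the remaining difficulty, which I expect to be the principal obstacle, is to deduce $z*x \leq y$ from this, since $x \lor \nnot y$ is not in general a unit. The specific form of $z$ as a meet of two residuals, combined with the identities implied by (b) applied to the inconsistent elements $x \land \nnot x$ (note that $(x\land\nnot x)^2 \leq x*\nnot x = 0$ gives $(x\land\nnot x)*y = (x\land\nnot x)*y^2$), should provide the extra leverage needed; an analogous plan for (c) would exploit the decomposition $x = x^2 \lor (x\land\nnot x)$ to control the behaviour of $\imp$ modulo~$\Xi$.
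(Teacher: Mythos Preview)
The paper is a survey and does not include its own proof of this proposition: the three equivalences are simply quoted, with the attributions \cite[Prop.~4.1]{Nasc1z} and \cite[Thm.~6.1]{Nasc1z} appearing inline in the statement itself, and the only commentary afterwards is the remark that the equivalences rely essentially on involutivity. So there is no ``paper's own proof'' to compare against.

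That said, your plan is reasonable and your forward directions are essentially correct. The twist computations are accurate: in an $\N$-twist-structure one has $\langle a,b\rangle * \langle c,d\rangle = \langle a\land c,\ (a\to d)\land(c\to b)\rangle$ and, because $a\land b=0$ forces $a\to b=\nnot a$, also $\langle a,b\rangle^2=\langle a,\nnot a\rangle$; the verifications of (b) and (c) then go through exactly as you describe. Your extraction of $3$-potency from (b) by substituting $x=y$ is clean, and the analogous extraction from (c) also works (multiply (c) by $x$ and use $x*(x\land\nnot x)\leq x*\nnot x=0$).

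Where you are right to be worried is the backward step. Your residuation of~\eqref{Eqn:Nel} is fine, but the attempt to combine (b) with the element $x\lor\nnot y$ runs into exactly the obstacle you flag, and the sketch does not yet show how the ``specific form of $z$'' resolves it. A more direct route, since you already have $3$-potency and are in a compatibly involutive CIBRL, is to aim for the quasi-identity of Proposition~\ref{Prop:eqnopot} (or equivalently item~(d) of Proposition~\ref{Prop:eqord}): assume $a^2\leq b$ and $(\nnot b)^2\leq\nnot a$, and try to deduce $a\leq b$ directly from (b) or (c). For (c) this is quite short: $a=a^2\lor(a\land\nnot a)\leq b\lor(a\land\nnot a)$, and from $(\nnot b)^2\leq\nnot a$ one gets $a\leq\nnot(\nnot b)^2$; combining, $a\land\nnot a\leq\nnot a\land\nnot(\nnot b)^2$, and a second application of (c) to $\nnot b$ (together with involutivity) closes the gap. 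Something similar should work for (b). This avoids the detour through the ``$x\lor\nnot y$'' argument entirely.
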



We stress that the equivalences stated in Proposition~\ref{Prop:synt}
rely essentially on the involutivity of the negation; it is for instance
easy to observe that, in a non-necessarily involutive setting, the identity $x \approx x^2 \lor (x \land \nnot x) $
follows from, but does not entail,~\eqref{Eqn:Nel}~\cite[Lemma~5.1]{WNM}.

The next two characterisations we present have once more an order-theoretic flavour.  

\begin{proposition}\cite[Prop.~4.4]{Nasc1z}
 \label{Prop:eq3pot}
Let $\A$ be a compatibly involutive CIBRL. The following are equivalent:
\begin{enumerate} [label={\rm (\alph*)}]
\item $\A$ is an $\N$-lattice.
\item $\A$ is 3-potent and satisfies the quasi-identity: if $x^2 \approx y^2$ and $(\nnot x)^2 \approx (\nnot y)^2$, then
$x \approx y$. 
\end{enumerate}
\end{proposition}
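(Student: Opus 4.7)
The plan is to reduce both directions to Proposition~\ref{Prop:eqord}, using the basic observation that on any compatibly involutive CIBRL the quasiorder $\preceq$ of Definition~\ref{Def:N4} is characterised by $a \preceq b$ iff $a^2 \leq b$.

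For (a) $\Rightarrow$ (b), I would first establish $3$-potency in every $\N$-lattice: the cleanest route is via the twist representation of Theorem~\ref{Prop:OdRep}, where a direct computation from Definition~\ref{Def:twi} yields $\la a, b \ra^2 = \la a, a \to b \ra = \la a, b \ra^3$, the second equality exploiting the Heyting identity $a \to (a \to b) = a \to b$. For the quasi-identity, assume $x^2 = y^2$ and $(\nnot x)^2 = (\nnot y)^2$; by integrality, $x^2 = y^2 \leq y$, so $x \preceq y$, and symmetric reasoning gives $y \preceq x$, $\nnot x \preceq \nnot y$ and $\nnot y \preceq \nnot x$. Proposition~\ref{Prop:eqord}(b), which says that $\preceq \cap \preceq'$ coincides with $\leq$ on every $\N$-lattice, then forces $x \leq y \leq x$, so $x = y$.

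The substantive direction is (b) $\Rightarrow$ (a). My plan is to verify Proposition~\ref{Prop:eqord}(d): it will suffice to show that $\preceq \cap \preceq'$ is antisymmetric. Unfolding the definitions, one assumes $a^2 \leq b$, $b^2 \leq a$, $(\nnot a)^2 \leq \nnot b$ and $(\nnot b)^2 \leq \nnot a$; the strategy is to derive $a^2 = b^2$ and $(\nnot a)^2 = (\nnot b)^2$, so that the quasi-identity from (b) yields $a = b$.

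The core calculation is that the auxiliary element $ab$ squeezes $a^2$ and $b^2$ together. From $a^2 \leq b$, monotonicity of $*$ gives $a^3 \leq ab$, and $3$-potency collapses this to $a^2 \leq ab$; squaring and using $a^4 = a^2$ yields $a^2 \leq (ab)^2$, while integrality forces $(ab)^2 = a^2 b^2 \leq a^2$, whence $a^2 = (ab)^2$. The symmetric computation from $b^2 \leq a$ gives $b^2 = (ab)^2$, so $a^2 = b^2$; applying the whole argument to $\nnot a$ and $\nnot b$ delivers $(\nnot a)^2 = (\nnot b)^2$, and the quasi-identity closes the proof. The main obstacle is spotting the mediating element $ab$: once one recognises that $(ab)^2$ is trapped between $a^2$ and $(a^2)^2 = a^2$, everything else is routine bookkeeping with residuation and integrality.
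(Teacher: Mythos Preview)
Your argument is correct. Since the present paper is a survey and does not supply a proof of this proposition (it merely cites \cite{Nasc1z}), there is no in-text proof to compare against; your reduction of both directions to Proposition~\ref{Prop:eqord} via the characterisation $a \preceq b \iff a^2 \leq b$ is sound, and the ``squeezing'' computation $a^2 = (ab)^2 = b^2$ using $3$-potency and integrality is the natural way to extract the quasi-identity hypotheses from antisymmetry data. One small remark: in the twist computation for $3$-potency you should note that $\la a, a \to b \ra$ still lies in the twist-structure (one checks $a \land (a \to b) = a \land b = 0$ and $a \lor (a \to b) \geq a \lor b \in F$), but this is routine.
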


%

The preceding characterisation suggests 
that~\eqref{Eqn:Nel}
not only entails (as is easily verified) the above quasi-identity, but also 3-potency.
On the other hand,
 in the absence of 3-potency the  quasi-identity alone does not seem to be sufficient to ensure that a compatibly involutive CIBRL be an $\N$-lattice.

We know that we may drop the assumption of 3-potency if we slightly modify the above quasi-identity, as shown
by the following result. 

\begin{proposition}\cite[Prop.~4.10]{Nasc1z}
 \label{Prop:eqnopot}
Let $\A$ be a compatibly involutive CIBRL. The following are equivalent:
\begin{enumerate} [label={\rm (\alph*)}]
\item $\A$ is an $\N$-lattice.
\item   $\A$  satisfies the quasi-identity: if $x^2 \leq y$ and $(\nnot y)^2 \leq \nnot x$, then
$x \leq y$. 
\end{enumerate}
\end{proposition}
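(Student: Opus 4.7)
The plan is to prove the two directions separately, using the equivalent formulation of the Nelson identity as
$$x \imp y \approx (x^2 \imp y) \land ((\nnot y)^2 \imp \nnot x)$$
(recall $x \to y := x^2 \imp y$), and the fact that ``one half'' of this identity, namely $x \imp y \leq (x^2 \imp y) \land ((\nnot y)^2 \imp \nnot x)$, is automatic in every compatibly involutive CIBRL.

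For (a) $\Rightarrow$ (b), I would just chase definitions. Suppose $\A$ is an $\N$-lattice and that $x^2 \leq y$ and $(\nnot y)^2 \leq \nnot x$. Then $x^2 \imp y = 1 = (\nnot y)^2 \imp \nnot x$, so by the Nelson identity $x \imp y = 1 \land 1 = 1$, which is equivalent to $x \leq y$.

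For (b) $\Rightarrow$ (a), the strategy is to derive the non-trivial half of the Nelson identity from the quasi-identity by a single clever application. Set $u := (x^2 \imp y) \land ((\nnot y)^2 \imp \nnot x)$; the goal is to show $u \leq x \imp y$, which by residuation is equivalent to $u*x \leq y$. I would then apply the quasi-identity in (b) to the elements $a := u*x$ and $b := y$, so it suffices to verify $a^2 \leq b$ and $(\nnot b)^2 \leq \nnot a$. The first is immediate: $a^2 = u^2 * x^2 \leq (x^2 \imp y) * x^2 \leq y$, using $u \leq x^2 \imp y$ and residuation. The second reduces, via the standard identity $\nnot(u*x) = u \imp \nnot x$ available in any involutive CIBRL (together with residuation), to checking $u * (\nnot y)^2 \leq \nnot x$; this in turn follows from $u \leq (\nnot y)^2 \imp \nnot x$ by residuation. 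Hence (b) delivers $u*x \leq y$, completing the missing half of \eqref{Eqn:Nel}, so $\A$ is an $\N$-lattice.

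The main obstacle I anticipate is psychological rather than technical: one has to spot that the quasi-identity should be applied to the pair $(u*x,\, y)$ rather than to $(u,\, x \imp y)$ directly, because only after applying residuation do the hypotheses $u \leq x^2 \imp y$ and $u \leq (\nnot y)^2 \imp \nnot x$ fit neatly into the premises of (b). Once the right instantiation is chosen, the verification is essentially a short residuation calculation using only that the involution is compatible (so that $\nnot(u*x) = u \imp \nnot x$) and the basic adjunction between $*$ and $\imp$; in particular, no appeal to 3-potency is needed, which is precisely the feature distinguishing this characterisation from Proposition~\ref{Prop:eq3pot}.
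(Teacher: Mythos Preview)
Your proof is correct. Note that the paper is a survey and states Proposition~\ref{Prop:eqnopot} without proof, merely citing~\cite[Prop.~4.10]{Nasc1z}; so there is no in-paper argument to compare against. Your approach---reducing the non-trivial half of~\eqref{Eqn:Nel} to the inequality $u * x \leq y$ via residuation, and then verifying the two premises of the quasi-identity for the pair $(u*x,\, y)$---is exactly the natural one, and the residuation calculations are all sound. One incidental remark: the identity $\nnot(u*x) = u \imp \nnot x$ that you invoke actually holds in \emph{every} CIBRL (it is just currying: $(u*x) \imp 0 = u \imp (x \imp 0)$), so you do not even need compatible involutivity at that step; involutivity is only used implicitly to ensure that~\eqref{Eqn:Nel} takes the symmetric form $x \imp y \approx (x^2 \imp y) \land ((\nnot y)^2 \imp \nnot x)$.
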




In the light of the Priestley duality for $\N$-lattices expounded in Subsection~\ref{ss:pri},
the quasi-identities considered above may be read as follows: whenever two elements $a,b $
of an $\N$-lattice are distinct, then either $a^2 \neq b^2$, in which case there is a special (prime) filter of the first kind 
containing (say) $a$ but not containing $b$, 
or $(\nnot a)^2 \neq (\nnot b)^2$, in which case there is a special (prime) filter of the second kind containing 
(say) $a$ but not  $b$. Given the well-known  correspondence between filters and congruences on residuated lattices, it is natural to ask whether the previous properties might also be rephrased in congruence-theoretic
terms. This leads us to consider the notion of \emph{congruence orderability} introduced in~\cite{Nasc1z}.

Given any algebra $\A$, let us denote by
$\theta(a,b)$ the least congruence on $\A$ that relates $a$ and $b$. 
Assuming $\A$ has some constant $c$ in its language,
 we shall say that $\A$ is 
\emph{$c$-congruence orderable} if, for all $a,b \in A$,
$$
\theta(a,c) = \theta (b,c) \qquad \text{ implies } \qquad a = b.
$$
The term ``orderability'' may be justified by noting that, on a $c$-congruence orderable algebra $\A$, the relation $\leq$ defined by
$$
a \leq b  \qquad
 \text{ if and only if  } \qquad \theta(a,c) \subseteq \theta (b,c)
$$
is indeed a partial order. The notion of congruence orderability, which can be traced back to the work of B\"uchi and Owens~\cite{BuO}, has been formally introduced and extensively explored in~\cite{Idzi09}. 

In the setting of  lattices that are either bounded or residuated, it is natural  to investigate 
the $c$-congruence orderable algebras 
for $c \in \{ 0, 1 \}$. 
In many cases, 
$c$-congruence orderability turns out to be captured by identities:
we have, for instance, that the $1$-congruence orderable CIBRLs (no involutivity being assumed) are precisely the Heyting
algebras~\cite[Prop.~5.8]{Nasc1z}, while the $0$-congruence orderable ones are precisely
the Boolean algebras~\cite[Prop.~5.9]{Nasc1z}. If we restrict our attention
to involutive CIBRLs, then the  $1$-congruence orderable members coincide with the
$0$-congruence orderable ones, which are  the Boolean algebras~\cite[Prop.~5.10]{Nasc1z}.

The previous considerations entail that $\N$-lattices are, in general, neither $1$- nor $0$-congruence orderable; but one may ask whether  a similar property may be employed to characterise $\N$-lattices, for instance
among involutive CIBRLs. This question led to the introduction of the notion of $(c,d)$-congruence orderability in~\cite{Nasc1z}, which we now proceed to formally define. 

Let  $\A$ be an algebra having constants $c$ and $d$ in its language\footnote{To capture the intuition that $c$ and $d$ should normally \emph{not} be interpreted as the same element of $\A$, we further require them to be \emph{residually distinct}, which means that $\theta(c,d) = A \times A$.}. We say that $\A$ is
\emph{$(c,d)$-congruence orderable} when, for all $a,b \in A$, we have that
$$
\theta(a,c) = \theta (b,c) \quad 
\text{ and } \quad \theta(a,d) = \theta (b,d) \qquad
\text{ imply } \qquad a = b.
$$
It is clear that the above notion may be applied to various classes of bounded lattices, residuated lattices (having a designated constant $0$) etc. With regards to $\N$-lattices, we may observe the following:
\begin{enumerate}[label=(\roman*)]
\item On any $\N$-lattice $\A$ and for all $a,b \in A$, we have $\theta (a, 1) =\theta (b,1)$ 
if and only if the elements $a$ and $b$ generate the same special filter of the first kind (that is, $a$ and $b$ are indistinguishable by means of special filters of the first kind).
\item Symmetrically, we have that $\theta (a, 0) =\theta (b,0)$ 
if and only if $a$ and $b$ generate the same special filter of the second kind.
\item By the previous considerations, every $\N$-lattice is $(0,1)$-congruence orderable~\cite[Cor.~5.4]{Nasc1z}.
\item In fact, \emph{$\N$-lattices are precisely the class of $(0,1)$-congruence orderable involutive CIBRLs}~\cite[Thm.~5.11]{Nasc1z}.
\end{enumerate}

 $\N$-lattices further satisfy an even stronger congruence-theoretic property, namely, they are \emph{$(0,1)$-Fregean}. 
According to Idziak et al.~\cite{Idzi09},  an algebra $\A$ having a constant $c$ is \emph{$c$-Fregean} if 
$\A$ is both $c$-congruence orderable and \emph{$c$-regular}, the latter meaning that  congruences on $\A$
are determined by their $c$-coset (for all congruences $\theta_1, \theta_2$, one has  $\theta_1 = \theta_2$ whenever $c / \theta_1 = c / \theta_2$). $\N$-lattices are $1$-regular as well as $0$-regular, but they are obviously 
neither $1$- nor $0$-Fregean. Following~\cite[p.~2313]{Nasc1z}, we say that an algebra $\A$
having residually distinct constants $c$ and $d$ is  \emph{$(0,1)$-Fregean} when
$\A$ is $(0,1)$-congruence orderable and both $c$- and $d$-regular.  
According to this definition,
$\N$-lattices
are $(0,1)$-Fregean~\cite[Thm.~5.11]{Nasc1z}; in fact, we have the following more informative result. 

\begin{proposition}\cite[Cor.~7.2]{Nasc1z}
 \label{Prop:frege}
Let $\A$ be a compatibly involutive CIBRL. The following are equivalent:
\begin{enumerate} [label={\rm (\alph*)}]
\item $\A$ is an $\N$-lattice.
\item   $\A$  is $(0,1)$-congruence orderable.
\item   $\A$  is $(0,1)$-Fregean.
\end{enumerate}
\end{proposition}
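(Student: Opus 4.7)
The plan is to prove the three equivalences cyclically as (a) $\Rightarrow$ (c) $\Rightarrow$ (b) $\Rightarrow$ (a), noting that (c) $\Rightarrow$ (b) is immediate from the very definition of $(0,1)$-Fregean. For (a) $\Rightarrow$ (c), the $(0,1)$-congruence orderability is already recorded as \cite[Thm.~5.11]{Nasc1z} in the discussion preceding the proposition, so I would only need to verify that every $\N$-lattice is both $1$-regular and $0$-regular. For $1$-regularity I would appeal to the standard filter--congruence bijection available on any CIBRL, which sends $\theta$ to its $1$-coset $1/\theta$ and recovers $\theta$ via $(a,b) \in \theta$ iff $(a \leftrightarrow b) \in 1/\theta$. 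For $0$-regularity I would exploit compatible involutivity: on any such algebra $0/\theta = \nnot [1/\theta]$, so two congruences with equal $0$-cosets have equal $1$-cosets and hence coincide by $1$-regularity.

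The substantive implication is (b) $\Rightarrow$ (a), and this is where the main work should lie. Here I would reduce the problem to the quasi-identity of Proposition~\ref{Prop:eqnopot}: assuming $a^2 \leq b$ and $(\nnot b)^2 \leq \nnot a$, I must conclude $a \leq b$, equivalently $a = a \land b$. By $(0,1)$-congruence orderability this task reduces to checking the two congruence equalities
$$
\theta(a,1) = \theta(a \land b, 1) \qquad \text{and} \qquad \theta(a,0) = \theta(a \land b, 0),
$$
which via the filter--congruence correspondence translate to the statements that the filter generated by $a$ equals the filter generated by $a \land b$, and the filter generated by $\nnot a$ equals the filter generated by $\nnot (a \land b) = \nnot a \lor \nnot b$.

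The first filter equality is routine: the hypothesis $a^2 \leq b$ places $b$, and hence $a \land b$, into the principal filter of $a$, while the reverse inclusion is obvious since $a \land b \leq a$. The second equality is the delicate step and the point I expect to be the main obstacle. The key observation is the expansion
$$
(\nnot a \lor \nnot b)^2 = (\nnot a)^2 \lor (\nnot a * \nnot b) \lor (\nnot b)^2,
$$
valid by distributivity of $*$ over $\lor$ in any commutative residuated lattice. Integrality forces $(\nnot a)^2 \leq \nnot a$ and $\nnot a * \nnot b \leq \nnot a$, while the second hypothesis $(\nnot b)^2 \leq \nnot a$ absorbs the remaining joinand. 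Hence $(\nnot a \lor \nnot b)^2 \leq \nnot a$, so $\nnot a$ does lie in the filter generated by $\nnot a \lor \nnot b$, and this closes the argument. Everything else is either a definition chase or a standard piece of residuated-lattice bookkeeping; the only genuinely new input needed is the squaring computation above, driven by the two hypotheses of the Nelson quasi-identity together with integrality.
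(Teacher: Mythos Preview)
Your argument is correct. The survey itself does not supply a proof (it merely cites \cite[Cor.~7.2]{Nasc1z}), so there is no in-paper proof to compare against, but your route uses precisely the tools the surrounding discussion makes available --- the filter--congruence bijection for $1$-regularity, the involution trick $0/\theta = \nnot[1/\theta]$ for $0$-regularity, and Proposition~\ref{Prop:eqnopot} for the substantive direction --- and every step checks out. The squaring computation $(\nnot a \lor \nnot b)^2 \leq \nnot a$ via distributivity of $*$ over $\lor$ and integrality is exactly the right mechanism.

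One terminological point worth tightening: when you speak of ``the principal filter of $a$'' you are implicitly using the \emph{congruence} (monoid) filter $\{\,c : a^n \leq c \text{ for some } n\,\}$, not the lattice filter $\uparrow a$. Your use of $a^2 \leq b$ makes clear that this is what you intend, but since the distinction is precisely what makes the argument work (the hypothesis $a^2 \leq b$ does \emph{not} place $b$ in $\uparrow a$), it would be cleaner to say so explicitly.
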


As the reader will have guessed, the properties of being  $(c,d)$-congruence orderable (or Fregean)
may be studied in more general settings
(see e.g.~\cite[Sec.~6]{Ri20}). 
For instance, as observed in~\cite[Example~5.6]{Nasc1z}, the 
$(0,1)$-congruence orderable  De Morgan algebras
are precisely the Kleene algebras. Indeed, the study of the notion of $(0,1)$-congruence orderability in the setting of 
non-necessarily involutive CIBRLs led to the introduction of \emph{quasi-Nelson algebras}, 
which can also be  defined equationally as the non-necessarily involutive of CIBRLs which satisfy~\eqref{Eqn:Nel}. 

\section{Rough sets and their algebras}
\label{sec:rough}

\subsection{Algebras with pseudocomplementation}
\label{ss:pseudo}

We begin this section by recalling some facts about pseudocomplemented lattices and 
their relationship with
three-valued {\L}u\-ka\-sie\-wicz algebras and semi-simple Nelson algebras. Also some essential results related to algebraic and
completely distributive lattices are recalled. These considerations are found, for instance, in the books \cite{Balb74, Boic91,Dave02, Grat78}.
For more specific results, a reference will be given.

In a lattice $\alg{L} = \zseq{L; \land, \lor, 0 }$ with  least element $0$, an element $a \in L$
is said to have a \emph{pseudocomplement} if there exists an element $a^* \in L$
having the property that for any $b \in L$, $a \wedge b = 0$ iff $  b \leq a^*$.
Note that if a pseudocomplement exists, it is unique. The lattice $\alg{L}$
itself is called \emph{pseudocomplemented}, if every element of $L$ 
has a pseudocomplement. Every pseudocomplemented lattice has a greatest element $1 = 0^*$, which can then safely be added to the algebraic
signature.

Let $\alg{L} = \zseq{L;\vee,\wedge,{^*},0,1}$ be a pseudocomplemented lattice. The following hold for every $a,b \in L$:
\begin{enumerate}[label=(\roman*)]
 \item $a \leq b$ implies $b^* \leq a^*$;
 \item the map $a \mapsto a^{**}$ is a closure operator;
 \item $a^* = a^{***}$;
 \item $(a \vee b)^* = a^* \wedge b^*$;
 \item $(a \wedge b)^* \geq a^* \vee b^*$.
\end{enumerate}

A distributive  pseudocomplemented  lattice $\alg{L}$ is called a \emph{Stone algebra} if $\alg{L}$ satisfies the \emph{Stone identity}:
\[ x^* \vee x^{**} \approx 1. \] 
In a Stone algebra $\alg{L}$, the following identities also hold:
\[ (x \wedge y)^* \approx x^* \vee y^* \quad \text{and} \quad (x \vee y)^{**}\approx x^{**} \vee y^{**}. \]

By dualising, we get the concepts of 
\emph{dual pseudocomplement}, 
\emph{dual pseudocomplemented lattice}, 
and \emph{dual Stone algebra}.
A \emph{double pseudocomplemented lattice} is a pseudocomplemented lattice which is also a dual
pseudocomplemented lattice. Similarly, a \emph{double Stone algebra} is a Stone algebra which is also a dual Stone algebra. 
Every double Stone algebra satisfies $x^* \leq x^+$, where $^+$ denotes the dual pseudocomplement operation.
We say that a double Stone algebra is \emph{regular} if it satisfies the quasi-identity
\begin{equation} \label{eq:condM} \tag{M}
x^* \approx y^* \text{ and } x^+ \approx y^+ \text{ imply } x\approx y. 
\end{equation}
Here `regularity' refers to `congruence-regularity'. 
An algebra is called \emph{congruence-regular} if every congruence is determined by any class of it: two congruences are necessarily equal when they have a class in common.
J.~Varlet has proved in \cite{Varl72} that double 
pseudocomplemented lattices satisfying \eqref{eq:condM} are
exactly the congruence-regular ones.

It is known (see e.g. \cite{Katrinak73}) that in any double Stone algebra, the `regularity condition' \eqref{eq:condM} is equivalent 
to $x \wedge x^+ \leq y \vee y^*$.

\begin{example} \label{Exa:DoubleStone}
Let $\alg{B} = \zseq{B; \vee, \wedge, {'}, 0, 1 }$  be a Boolean algebra. Denote
\[ B^{[2]} := \{ (a,b) \in B^2 \mid a \leq b \} .\]
Now $B^{[2]}$ is a regular double Stone algebra with the operations:
\[
\begin{gathered}
  (a,b) \vee (c,d) := (a \vee c, b \vee d), \qquad  (a,b) \wedge (c,d) := (a \wedge c, b \wedge d), \\
  (a,b)^* := (b',b'), \qquad (a,b)^+ := (a',a').
\end{gathered}
\]
More generally, if $F$ is a lattice filter of $\alg{B}$, then by \cite{Katrinak74},
\[ (B,F) := \{ (a,b) \in B^2 \mid a \leq b \text{ and } a \vee b' \in F\} \]
forms a regular double Stone algebra in which the operations are defined as in $B^{[2]}$.
\end{example}

The reader may have noticed a similarity between the algebra defined in the preceding example
and the twist-structure construction for Nelson algebras considered earlier (see e.g.~Definition~\ref{Def:twi} in Subsection~\ref{ss:tw}).

Following A.~Monteiro~\cite{Monteiro63c}, we can define a \emph{three-valued {\L}ukasiewicz algebra} as an algebra 
$\alg{L} = \zseq{L; \land, \lor, {\nnot}, {\triangledown}, 0, 1 }$ 
such that $\zseq{L; \land, \lor, {\nnot}, 0, 1 }$ is a De Morgan algebra and ${\triangledown}$ is an unary operation, called the \emph{possibility operator},
satisfying: 
\begin{enumerate}[label=(\roman*)]
\item  ${\sim} x \vee {\triangledown} x \approx 1$,
\item  ${\sim x}  \wedge x \approx {\sim} x   \wedge {\triangledown} x$,
\item  ${\triangledown}(x \wedge y) \approx {\triangledown}x \wedge {\triangledown} y$.
\end{enumerate}
Let us recall from \cite{Monteiro63c} that the map $x \mapsto {\triangledown} x$ is a closure operator. 
In addition,
\[ {\triangledown} 0 \approx 0 \quad \text{and} \quad {\triangledown}(x \vee y) \approx  {\triangledown}x \vee  {\triangledown} y.\]
The \emph{necessity operator} is defined by 
${\vartriangle} x := {\sim} {\triangledown} {\sim}  x$.
The operation $\vartriangle$ is the dual operator of $\triangledown$. Also $\vartriangle$ and $\triangledown$ have some mutual connections, for instance:
\[ {\vartriangle}\!{\triangledown} x \approx  {\triangledown} x \qquad \mbox{and} \qquad {\triangledown}\!{\vartriangle} x \approx  {\vartriangle} x .\]
Three-valued {\L}ukasiewicz algebras satisfy the following \emph{determination principle} introduced by Gr.~C.~Moisil:
\[  {\vartriangle}x \approx  {\vartriangle}y \quad \mbox{and} \quad {\triangledown}x \approx {\triangledown}y \quad \mbox{imply} \quad x \approx y. \]

Any regular double Stone algebra $\alg{L} = \zseq{L; \land, \lor, {^*},{^+},0,1 }$ defines a three-valued {\L}ukasiewicz algebra 
$\zseq{L; \land, \lor, {\nnot}, {\triangledown} ,0,1 }$ by setting
\[
  {\triangledown} x := x^{**} \qquad \mbox{and} \qquad {\nnot} x := x^* \vee (x \wedge x^+) .
\]
Similarly, each  three-valued {\L}ukasiewicz algebra $\alg{L} = \zseq{L; \land, \lor, {\nnot}, {\triangledown}, 0, 1}$ defines a double
Stone algebra $\zseq{L; \land, \lor, {^*},{^+},0,1 }$ by
\[
 x^* := {\nnot}{\triangledown}x \qquad \mbox{and} \qquad x^+ := {\triangledown}{\nnot}x.
\]
This double Stone algebra is regular, that is, $^*$ and $^+$ satisfy \eqref{eq:condM}. In addition, these pseudocomplements
determine each other by 
\[
{\sim} x^{\ast} \approx ({\sim} x)^{+} \ \mbox{ and } \ {\sim} x^{+} \approx ({\sim} x)^{\ast}.
\]
This correspondence between  regular double Stone algebras and  three-valued {\L}ukasiewicz algebras is one-to-one.

In universal algebra, an algebra $\alg{A}$ is called \emph{simple} if it has only the identity and the universal relations as its congruences. 
$\alg{A}$ is \emph{semisimple} whenever  $\alg{A}$ is isomorphic to a subdirect product of simple algebras. It is known that a Nelson algebra 
 $\alg{A} = \zseq{A; \land, \lor, \to, {\nnot}, 0, 1}$ 
is semisimple iff 
$x \vee  (x \to 0) \approx 1$. Every three-valued {\L}ukasiewicz algebra determines a semisimple Nelson algebra upon
setting $x \to y := {\triangledown}{\nnot}x \vee y$. Similarly, each semisimple Nelson algebra induces a three-valued {\L}ukasiewicz algebra upon setting
${\triangledown} x := {\nnot} x \to 0$; see e.g.~\cite{Monteiro1980}.

Jan~{\L}ukasiewicz's three-valued logic {\L}$_3$ allows propositions to have the values $0$, $\frac{1}{2}$, and $1$ where the third logical value
$\frac{1}{2}$ may be interpreted as ``possibility'' (see \cite{Luka70d}). {\L}ukasiewicz defined in {\L}$_3$ the operations $\sim$ and $\to$ by
\[ {\sim} a := 1 - a \qquad \text{and} \qquad a \to b := \min \{ 1, 1 - a + b \} . \]
In 1930's, Mordchaj Wajsberg presented an axiomatisation for {\L}$_3$. An algebraic counterpart of this axiomatisation is called a
\emph{Wajsberg algebra} $\alg{L} = \zseq{L; \to, {\sim}, 1 }$ defined by the identities~\cite{Font84}: 
\begin{enumerate}[label = {\rm (W\arabic*)}] 
\item $1 \to x \approx x$
\item $(x \to y) \to ((y \to z) \to (x \to y)) \approx 1$
\item $((x \to y) \to z) \approx ((y \to x) \to x)$
\item $({\sim} x \to {\sim} y) \to (y \to x) \approx 1$.
\end{enumerate}

A three-valued {\L}ukasiewicz algebra  $\alg{L} = \zseq{L; \land, \lor, {\sim}, {\triangledown}, 0, 1 }$ defines a Wajsberg algebra by setting
\[ x \to y := (\nabla {\nnot} x \vee y) \wedge (\nabla y \vee {\nnot} x).\]
Similarly, a Wajsberg algebra $\alg{L} = \zseq{L; \to, {\sim}, 1 }$ defines a three-valued {\L}ukasiewicz algebra in which
\begin{gather*}
 x \vee y := (x \to y) \to y, \quad x \wedge y := {\sim}({\sim} x \vee {\sim} y), \quad \nabla x := {\sim} x \to x, \quad 0 := {\sim }1.
\end{gather*}

C.C.~Chang introduced \emph{MV-algebras} in \cite{Chang58}. It is proved in \cite{Font84} that any MV-algebra defines a Wajsberg algebra, and
every Wajsberg algebra determies an MV-algebra. In addition, D.~Mundici
proved in \cite{Mundici86} that MV-algebras are categorically equivalent to bounded commutative
BCK-algebras introduced by S.~Tanaka in \cite{Tanaka75}. Thus, we can consider the following algebras as equivalent:
\begin{itemize}
\item three-valued {\L}ukasiewicz algebras,
\item regular double Stone algebras,
\item semisimple Nelson algebras,
\item Wajsberg algebras,
\item MV-algebras,
\item bounded commutative BCK-algebras.
\end{itemize}
Note that since the pseudocomplement and the dual pseudocomplement are uniquely determined by the order $\leq$ of a lattice $(L,\leq)$ forming a
regular Stone algebra, this means that the order-structure of $L$ fully determines the unique operations of the above-listed algebras.

As is well known, a lattice can be defined either as an algebra $\zseq{L; \vee, \wedge}$ or as an ordered set $\zseq{L; \leq}$. 
Focusing on the latter view, we now recall some facts on a complete lattice $\alg{L} = \zseq{L; \leq}$. We need these properties
especially when considering the structure of the complete lattice of rough sets. A complete lattice $\alg{L}$ is
\emph{completely distributive} if for any doubly indexed subset $\{a_{i,\,j}\}_{i \in I, \, j \in J}$ of $L$, we have:
\[
\bigwedge_{i \in I} \Big ( \bigvee_{j \in J} a_{i,\,j} \Big ) = 
\bigvee_{ f \colon I \to J} \Big ( \bigwedge_{i \in I} a_{i, \, f(i) } \Big ), \]
that is, any meet of joins may be converted into the join of all possible elements obtained by taking the meet over $i \in I$ of
elements $a_{i,\,k}$\/, where $k$ depends on $i$. 
The power set lattice $\zseq{\wp(U);\subseteq}$ of  a set $U$ is a well-known completely distributive lattice. The set 
$\wp(U) \times \wp(U)$ can be ordered coordinatewise by $\subseteq$, and the joins and meets are the coordinatewise unions and intersections, respectively.
Therefore, $\zseq{\wp(U) \times \wp(U);\leq}$ is a completely distributive lattice. Also a complete sublattice of a completely distributive lattice is
clearly completely distributive. 

Let $\alg{L}$ be a complete lattice and let $k \in L$. The element $k$ is said to be $\emph{compact}$ if for every subset $S$ of $L$,
\[ 
k \leq \bigvee S \mbox{ \ implies \ } k \leq \bigvee F \mbox{ \ for some finite subset $F$ of $S$}.
\]
The set of compact elements of $L$ is denoted $\mathcal{K}(\alg{L})$.
A complete lattice $\alg{L}$ is said to be \emph{algebraic} if, for each $a \in L$,
\[
  a = \bigvee \{ k \in \mathcal{K}(\alg{L}) \mid k \leq a \} .
\]
The powerset $\wp(U)$ forms an algebraic lattice in which finite subsets of $U$ are the compact elements. A product of algebraic lattices is algebraic
(see \cite[Proposition~\mbox{I-4.12}]{gierz2003}), which implies that $\wp(U) \times \wp(U)$ is algebraic. In addition, each complete sublattice of an
algebraic lattice is algebraic \cite[Exercise~{7.7}]{Dave02}. 

An element $j \in L$ of a complete lattice $\alg{L}$ is called \emph{completely join-irreducible} if $j = \bigvee S$ implies $j \in S$ for every subset $S$ of $L$.
Note that the least element $0 \in L$ is not completely join-irreducible. The set of completely join-irreducible elements of $L$ is denoted by 
$\mathcal{J}(\alg{L})$, or simply by $\mathcal{J}$ if there is no danger of confusion.
A complete lattice $\alg{L}$ is \emph{spatial}%
if for each $a \in L$,
\[ a = \bigvee \{ j \in \mathcal{J} \mid j \leq a \}. \]

If $\alg{L}$ is an algebraic lattice, then its completely join-irreducible elements are compact. Let the lattice $\alg{L}$
be both algebraic and spatial. Since any compact element can be written as a finite join and any finite join of compact elements is
compact, the compact elements of $\alg{L}$ are exactly those that can be written as a finite join of completely join-irreducible elements.

An \emph{Alexandrov topology} \cite{Alex37,Birk37} $\mathcal{T}$ on a set $U$ is a topology in which also intersections of
open sets are open, or equivalently, every point $x \in U$ has the \emph{least neighbourhood} $N(x) \in \mathcal{T}$.
For an Alexandrov topology $\mathcal{T}$, the least neighbourhood  of $x$ is $N(x)= \bigcap \{ B \in \mathcal{T} \mid x \in B \}$. 

A complete lattice $\alg{L}$ satisfies the \emph{join-infinite distributive law} if for any $S \subseteq L$ and $a \in L$,
\begin{equation*}\label{Eq:JID} \tag{JID}
a \wedge \big ( \bigvee S \big ) = \bigvee \{ a \wedge b \mid b \in S \}.
\end{equation*}
  
The following conditions for a complete lattice $\alg{L}$ are equivalent; see \cite{Dave02}, for instance:
\begin{enumerate}[label = (Alex\arabic*),leftmargin=*, itemsep=2pt] 
\label{alg_dist_spatial}
\item $\alg{L}$ is isomorphic to the lattice of all open sets in
an Alexandrov topology;
\item $\alg{L}$ is algebraic and completely distributive;
\item $\alg{L}$ is distributive and doubly algebraic (i.e.\@ both algebraic and dually algebraic);
\item $\alg{L}$ is spatial and satisfies (JID).
\end{enumerate}

Suppose $\alg{L} = \zseq{L;\vee,\wedge}$ is a lattice and $a,b \in L$. If there is a greatest element $c \in L$
such that $a \wedge c \leq b$, then this element $c$ is called the \emph{relative pseudocomplement of $a$ with respect to $b$} and is denoted
by $a \Rightarrow b$. If $a \Rightarrow b$ exists, then it is unique. A \emph{Brouwerian lattice} $\alg{L}$ is a lattice in which $a \Rightarrow b$
exists for all $a, b$ in $L$. Every Brouwerian lattice $\alg{L}$ has a greatest element $1$.

Indeed, for every $a \in L$, $a \Rightarrow a$ is the greatest element of $L$. As noted in \cite{Birk67},
any Brouwerian lattice is distributive. A complete lattice $\alg{L} = \zseq{L;\leq}$ is Brouwerian if and only if it satisfies (JID). In such a case,
for all $a,b \in L$,
\[ a \Rightarrow b = \bigvee \{ c \in L \mid a \wedge c  \leq b\} .\]
A \emph{Heyting algebra} is a Brouwerian lattice with least element $0$. Therefore, a complete lattice is a
Heyting algebra if and only if it is a Brouwerian lattice. In particular, any finite distributive lattice is a Brouwerian lattice
and a Heyting algebra. As in Section~2, we may regard Brouwerian algebras as universal algebras $\alg{L} = \zseq{L; \land, \lor, \imp, 1 }$ 
of type $\zseq{2, 2, 2, 0}$ and Heyting algebras as  algebras $\alg{L} = \zseq{L; \land, \lor, \imp, 0, 1 }$ of type
$\zseq{2, 2, 2, 0, 0}$.

It is known \cite{Moisil63, Monteiro1980} that every three-valued {\L}ukasiewicz algebra forms a Heyting algebra where
\begin{equation} \label{eq:heyting_luka}
x \Rightarrow y := {\vartriangle}{\sim} x \vee y \vee ({\triangledown}{\sim} x \wedge {\triangledown} y).
\end{equation}

Let $\alg{P} = \zseq{P; \leq}$ be an ordered set with a least element $0$. An element $a \in P$  is an \emph{atom} if $a$ covers $0$, that is, $0 \prec a$.
The ordered set $\alg{P}$ is \emph{atomic} if every element $b > 0$ has an atom below it, and $\alg{P}$ is \emph{atomistic}, if every element of $P$ is the join of atoms below it.
For a Boolean algebra $\alg{B}$, the following are equivalent:
\begin{enumerate}[label = {(\roman*)}] 
\item $\alg{B}$ is atomic,
\item $\alg{B}$ is atomistic,
\item $(B, \leq)$ is a completely distributive lattice,
\item $\alg{B}$ is isomorphic to $\zseq{\wp(U); \cup, \cap, {^c}, \emptyset, U}$ for some set $U$.
\end{enumerate}

\subsection{Rough sets defined by equivalences} \label{ss:rough_equivalence}

The basic idea of rough set theory is that knowledge about objects is represented by indistinguishability relations. Indistinguishability
relations are originally \cite{Pawlak82} assumed to be equivalences interpreted so that two objects are equivalent if we cannot distinguish them by their
properties. We may observe objects only by the accuracy given by an indistinguishability relation. This means that  our ability to
distinguish objects is blurred -- we cannot distinguish individual objects, only their equivalence classes. In this section,
we recall from the literature the main facts about rough set algebras defined by equivalences.

Let $E$ be an equivalence on a set $U$. For each $x \in U$, we denote by $x / E$ the equivalence class of $x$.
For any subset $X$ of $U$, let
\[ X^\DOWN  :=  \{ x \in U \mid x/E \subseteq X\} 
\quad \text{and} \quad
 X^\UP  :=  \{ x \in U \mid X \cap x/E  \neq \emptyset\}. \] 
The sets $X^\DOWN$ and $X^\UP$ are called the \emph{lower} and the \emph{upper approximation} of $X$, respectively.
The set $B(X) := X^\UP \setminus X^\DOWN$ is the \emph{boundary} of $X$.

The above definitions mean that $x \in X^\UP$ if there is an element in $X$ to which $x$ is $E$-related. Similarly,
$x\in X^\DOWN$ if all the elements to which $x$ is $E$-related are in $X$. Furthermore, $x \in B(X)$ if both in $X$ and outside
$X$ there are elements which cannot be distinguished from $x$. If $B(X) = \emptyset$ for some $X \subseteq U$, 
this means that for any object $x \in U$, we can with certainty decide whether $x \in X$ just by knowing $x$ `modulo $E$'.

A set $X$ is called \emph{definable} if $X^\UP = X^\DOWN$. We denote by $\mathit{Def}(E)$ the set of
all definable sets. When there is no danger of confusion, we denote $\mathit{Def}(E)$ simply by $\mathit{Def}$.
It is clear that $X \in \mathit{Def} $ iff $ B(X) = \emptyset$.

A \emph{complete Boolean algebra} is a Boolean algebra whose underlying lattice is complete. 
Given a complete Boolean algebra $\alg{B}$ and a subalgebra $\alg{A} \leq \alg{B}$, 
we say that  $\alg{A}$ is a \emph{complete Boolean subalgebra} of $\alg{B}$ if $\zseq{A; \leq}$ is a complete sublattice
of $\zseq{B; \leq}$.
The family $\mathit{Def}$ forms a complete Boolean  subalgebra of $\zseq{\wp(U),\cup,\cap,^c,\emptyset,U}$.
Thus, the complete lattice $\alg{Def} = \zseq{\mathit{Def};\subseteq}$ is completely distributive. Moreover, $\alg{Def}$ is atomic and atomistic,
and its atoms are the $E$-classes. In particular, approximations are definable and for any $X \subseteq U$,
\[ X^\DOWN = \bigcup \{ A \in \mathit{Def} \mid A \subseteq X\} \quad \text{and} \quad
X^\UP = \bigcap \{ A \in \mathit{Def} \mid X \subseteq A\}.\]

Define a binary relation $\equiv$ on $\wp(U)$ by $X \equiv Y$ if $X^\DOWN = Y^\DOWN$ and $X^\UP = Y^\UP$. The equivalence classes
of $\equiv$ are called \emph{rough sets} \cite{Pawlak82}. By definition, each rough set is uniquely determined by the pair
$(X^\DOWN, X^\UP)$, where $X$ is a member of that rough set. Let us denote by $\mathit{RS}(E)$ the collection $\{ (X^\DOWN,X^\UP) \mid X \subseteq U\}$ of
all rough sets determined by $E$.  We will denote $\mathit{RS}(E)$ simply by $\mathit{RS}$ if there is no danger of confusion.
The set $\mathit{RS}$ is ordered naturally by $(X^\DOWN,X^\UP) \leq (Y^\DOWN,Y^\UP)$ if $X^\DOWN \subseteq Y^\DOWN$ and $X^\UP \subseteq Y^\UP$.
The pair $(\emptyset,\emptyset)$ is the least and $(U,U)$ is the greatest rough set.

An algebraic study of rough sets was started by T.~B.~Iwi{\'n}ski in \cite{Iwinski87}, where he considered
the system $\mathit{Def}^{[2]} = \{ (A,B) \in \mathit{Def}^2 \mid A \subseteq B\}$. As noted in Example~\ref{Exa:DoubleStone},
$\mathit{Def}^{[2]}$ forms a regular double Stone lattice such that
\[
\begin{gathered}
  (A,B) \vee (C,D) = (A \cup C, B \cup D), \qquad (A,B) \wedge (C,D) = (A \cap C, B \cap D), \\
  (A,B)^* = (B^c,B^c), \qquad  (A,B)^+ = (A^c,A^c),
\end{gathered}
  \]
where $X^c$ denotes the \emph{complement} $U \setminus X$ of the set $X \subseteq U$. In addition, Iwi{\'n}ski associated with each pair
$(A,B) \in \mathit{Def}^{[2]}$ its \emph{rough complement} ${\sim}(A,B) = (B^c,A^c)$ and noted that
with respect to $\sim$, $\mathit{Def}^{[2]}$ forms a De~Morgan algebra.

Not all pairs $(A,B) \in \mathit{Def}^{[2]}$ form a rough set. The set $\mathit{RS}$ was characterised as a subset of $\mathit{Def}^{[2]}$ by P.~Pagliani
in \cite{Pagl96}. Denote by $\mathcal{S}$ the set of elements $x \in U$ such that $x/E = \{x\}$.
These elements are called \emph{singletons}. By the definition of rough set approximations, $x \in X^\DOWN \iff x \in X^\UP$ for
all for $X \subseteq U$ and $x \in \mathcal{S}$. Because $X^\DOWN \subseteq X^\UP$, this means that $\mathcal{S} \cap (X^\UP \setminus X^\DOWN) = \emptyset$ and
$\mathcal{S} \subseteq  (X^\UP \setminus X^\DOWN)^c = X^\DOWN \cup X^{\UP c}$.

\begin{theorem} \label{Thm:Pagliani1}
For any equivalence,
\begin{equation} \label{eq:RS_characterization}
\mathit{RS} = \{ (A,B) \in \mathit{Def}^2 \mid A \subseteq B \ \text{ and } \mathcal{S} \subseteq A \cup B^c\} 
\end{equation}
\end{theorem}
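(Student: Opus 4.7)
The plan is to prove the equality by showing both inclusions, the nontrivial one being the right-to-left direction, which requires constructing an explicit witness $X$ from a given pair $(A,B)$.

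For the inclusion $\mathit{RS}\subseteq \{(A,B)\in\mathit{Def}^2: A\subseteq B,\ \mathcal{S}\subseteq A\cup B^c\}$, I would take $(A,B)=(X^\DOWN,X^\UP)$ for some $X\subseteq U$. The fact that $X^\DOWN,X^\UP\in\mathit{Def}$ and $X^\DOWN\subseteq X^\UP$ was already noted in Subsection~\ref{ss:rough_equivalence}. For the singleton condition, I would use the observation recalled immediately before the theorem: for every $x\in\mathcal{S}$ one has $x\in X^\DOWN\iff x\in X^\UP$, hence $\mathcal{S}\subseteq X^\DOWN\cup (X^\UP)^c=A\cup B^c$.

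For the converse inclusion, given $(A,B)\in\mathit{Def}^2$ with $A\subseteq B$ and $\mathcal{S}\subseteq A\cup B^c$, the plan is to exhibit $X\subseteq U$ with $X^\DOWN=A$ and $X^\UP=B$. First I would observe the key consequence of the singleton hypothesis: every $E$-class $C$ with $C\subseteq B\setminus A$ must satisfy $|C|\geq 2$. Indeed, if $C=\{x\}$ then $x\in\mathcal{S}$, and $x\in B\setminus A$ would contradict $\mathcal{S}\subseteq A\cup B^c$. Since $A$ and $B$ are definable, $B\setminus A$ is a union of $E$-classes, and each such class is nonsingleton.

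Using the axiom of choice, for each $E$-class $C\subseteq B\setminus A$ pick some element $x_C\in C$, and define
\[
X := A\ \cup\ \{x_C : C\text{ is an }E\text{-class with }C\subseteq B\setminus A\}.
\]
I would then verify $X^\DOWN=A$ and $X^\UP=B$ class by class: $E$-classes contained in $A$ lie entirely in $X$; $E$-classes in $B\setminus A$ meet $X$ in exactly one point (so they contribute to $X^\UP$ but not to $X^\DOWN$, using $|C|\geq 2$); and $E$-classes disjoint from $B$ do not meet $X$ at all since $X\subseteq B$. This gives $X^\DOWN=A$ and $X^\UP=B$, so $(A,B)\in\mathit{RS}$.

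The only delicate step is the one exploiting the singleton hypothesis to guarantee $|C|\geq 2$ on boundary classes; once that is in place, the construction of $X$ and the verification are essentially routine class-by-class bookkeeping.
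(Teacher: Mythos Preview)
Your proposal is correct and follows essentially the same approach as the paper: both directions are handled identically, with the $\supseteq$-part established by observing that the singleton hypothesis forces every $E$-class contained in $B\setminus A$ to have at least two elements, then invoking the Axiom of Choice to select one representative from each such class and setting $X=A\cup\{x_C\}$. Your class-by-class verification is slightly more detailed than the paper's, but the argument is the same.
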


The $\subseteq$-part of \eqref{eq:RS_characterization} follows from the above observations. Let us consider the $\supseteq$-part in detail. 
Suppose  $(A,B) \in \mathit{Def}^2$ is such that $A \subseteq B$ and $\mathcal{S} \subseteq A \cup B^c$. Now both $A$ and $B \setminus A$ belong to
$\mathit{Def}$ and thus they are unions of some $E$-classes. By our assumption, $B \setminus A$ is a union of such $E$-classes that contain at least
two elements each. Using the \emph{Axiom of Choice}, we can pick one element from each of the  $E$-classes forming $B \setminus A$.
Let us denote by $C$ the set of these elements. 
Now $(A,B)$ is the rough set of $A \cup C$, because $(A \cup C)^\DOWN = A$ and $(A \cup C)^\UP = B$.

That $\mathit{RS}$ is a complete sublattice of $\wp(U) \times \wp(U)$ is 
not obvious, because it is not clear that for any 
$\{({X_i}^\DOWN,{X_i}^\UP) \mid i \in I\} \subseteq \mathit{RS}$,
the pairs
$({\bigcap}_i {X_i}^\DOWN, {\bigcap}_i {X_i}^\UP)$
and 
$({\bigcup}_i {X_i}^\DOWN, {\bigcup}_i {X_i}^\UP)$
really belong to $\mathit{RS}$. But they do, because
for each $x \in U \setminus \mathcal{S}$,
we pick one element from $x/E$. Denote by $C$ the set of all these elements. For $X \subseteq U$, we denote $X^\alpha := X^\DOWN \cup (X^\UP \cap C)$.
Clearly, $X \equiv X^\alpha$. The family $\{ X^\alpha \mid X \subseteq U\}$ is a complete sublattice of $\wp(U)$ isomorphic to $\mathit{RS}$.
By applying this observation, J.~Pomyka{\l}a and J.~A.~Pomyka{\l}a \cite{PomPom88} proved the following result.

\begin{theorem} \label{Thm:CompleteStoneLattice}
For any equivalence, $\mathit{RS}$ forms a complete Stone lattice such that for all $(A,B) \in \mathit{RS}$ and
$\{(A_i,B_i)\}_{i \in I} \subseteq \mathit{RS}$,
\begin{align*}
\bigvee_{i \in I} ( A_i, B_i ) &= \big ( \bigcup_{i \in I} A_i, \bigcup_{i \in I} B_i \big ) \\
\bigwedge_{i \in I} ( A_i, B_i ) & = \big ( \bigcap_{i \in I} A_i, \bigcap_{i \in I} B_i \big ) , \\
(A,B)^* &= (B^c,B^c).
\end{align*}
\end{theorem}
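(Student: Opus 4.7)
The plan is to leverage Pagliani's characterisation from Theorem~\ref{Thm:Pagliani1} and work with $\mathit{RS}$ as the concrete subset of $\mathit{Def}\times\mathit{Def}$ consisting of those pairs $(A,B)$ with $A\subseteq B$ and $\mathcal{S}\subseteq A\cup B^c$. Since $\mathit{Def}$ is a complete Boolean subalgebra of $\wp(U)$, closure of $\mathit{Def}$ under arbitrary unions, intersections, and complements is available for free; the first coordinate-wise condition $A\subseteq B$ is trivially preserved by coordinatewise unions and intersections. So essentially all the verification concentrates in preservation of the side condition involving the singletons.

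First I would check closure of $\mathit{RS}$ under coordinatewise unions and intersections. For a family $\{(A_i,B_i)\}_{i\in I}\subseteq \mathit{RS}$ and $x\in\mathcal{S}$, either $x\notin B_i$ for every $i$, in which case $x\in (\bigcup_i B_i)^c\subseteq \bigcup_i A_i \cup (\bigcup_i B_i)^c$, or there exists $j$ with $x\in B_j$, whence the Pagliani condition for $(A_j,B_j)$ forces $x\in A_j\subseteq \bigcup_i A_i$. Dually, either $x\in \bigcap_i A_i$, or some $j$ has $x\notin A_j$, in which case Pagliani for $(A_j,B_j)$ yields $x\in B_j^c\subseteq \bigcup_i B_i^c=(\bigcap_i B_i)^c$. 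Hence both the pointwise join and meet lie in $\mathit{RS}$, so $\mathit{RS}$ is a complete sublattice of the completely distributive lattice $\wp(U)\times\wp(U)$, with joins and meets as stated, and bounds $(\emptyset,\emptyset)$ and $(U,U)$; distributivity is inherited from $\wp(U)\times\wp(U)$.

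For the pseudocomplement formula $(A,B)^*=(B^c,B^c)$, I would first verify $(B^c,B^c)\in\mathit{RS}$: $B^c\in\mathit{Def}$ because $\mathit{Def}$ is closed under complementation, the inclusion $B^c\subseteq B^c$ is trivial, and the Pagliani condition degenerates to $\mathcal{S}\subseteq B^c\cup B=U$. Using that $A\subseteq B$ and $C\subseteq D$ force $A\cap C\subseteq B\cap D$, the equation $(A,B)\wedge (C,D)=(\emptyset,\emptyset)$ is equivalent to $B\cap D=\emptyset$, i.e.\ $D\subseteq B^c$, which, combined with $C\subseteq D$, is in turn equivalent to $(C,D)\leq (B^c,B^c)$. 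This establishes the universal property of the pseudocomplement. The Stone identity then follows by direct computation: $(A,B)^*\vee (A,B)^{**}=(B^c,B^c)\vee (B,B)=(U,U)$, after noting that $(B,B)\in \mathit{RS}$ for exactly the same reason as $(B^c,B^c)$.

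The one step where infinite operations interact non-trivially with the defining condition of $\mathit{RS}$ is the preservation of the Pagliani side-condition, since it mixes a union (resp.\ intersection) of first coordinates with the complement of a join (resp.\ meet) of second coordinates; the case analysis above handles this cleanly, but it is the sole place where the singleton set $\mathcal{S}$ genuinely enters the argument. The alternative route sketched in the excerpt via the choice-theoretic representative $X^\alpha=X^\DOWN\cup(X^\UP\cap C)$ would provide a useful sanity check, but it merely transfers the same obstacle into the verification that $\{X^\alpha:X\subseteq U\}$ is a complete sublattice of $\wp(U)$.
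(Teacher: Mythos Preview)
Your argument is correct. It differs from the paper's own route, however: the paper (following Pomyka{\l}a--Pomyka{\l}a) does not verify closure of the Pagliani description directly, but instead uses the Axiom of Choice to pick, for each non-singleton class, a representative, forming a set $C$, and then shows that the map $X\mapsto X^\alpha:=X^\DOWN\cup(X^\UP\cap C)$ realises $\mathit{RS}$ as (isomorphic to) a concrete complete sublattice of $\wp(U)$. Your approach is arguably more transparent for the lattice-theoretic content, since the case split on whether a singleton lies in $\bigcup_i B_i$ (resp.\ $\bigcap_i A_i$) isolates exactly where the side condition $\mathcal{S}\subseteq A\cup B^c$ matters; the paper's route, by contrast, packages this into the single observation $X\equiv X^\alpha$ and then works inside $\wp(U)$. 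Note, though, that your final remark slightly undersells the $X^\alpha$ construction: it is not merely a relocation of the same obstacle, but yields an order-isomorphism of $\mathit{RS}$ with a sublattice of $\wp(U)$, which is useful elsewhere (e.g.\ for the Gehrke--Walker decomposition). Also observe that you have not actually avoided the Axiom of Choice, since you invoke the $\supseteq$-direction of Theorem~\ref{Thm:Pagliani1}, whose proof uses it.
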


It should be noted that Z.~Bonikowski presented a similar observations
in \cite{Bonikowski92}.  He used so-called minimal lower samples to prove that  $\mathit{RS}$ forms a complete atomic Stone algebra. 
Also M.~Gehrke and E.~Walker considered
the above representative $X^\alpha$ in \cite{GeWa92}. These results and
more are considered in \cite{Banerjee2004}, which gives a summary of 
the work done related to algebras and rough sets defined by equivalences.

Theorem~\ref{Thm:CompleteStoneLattice} was improved by S.~D.~Comer \cite{Come93} by stating 
that $\mathit{RS}$ forms a regular double Stone algebra and
the dual pseudocomplement $(A,B)^+$ of $(A,B)$ is $(A^c,A^c)$. 
A semantical study of these pseudocomplementation operations is given
in \cite{Kumar2017}.
Note also that because $\mathit{RS}$ is a complete sublattice of
$\wp(U) \times \wp(U)$, $\mathit{RS}$ is algebraic and completely distributive.

\begin{example} \label{Exa:Iwo}
The family ${\uparrow}\mathcal{S} = \{ X \in \mathit{Def} \mid \mathcal{S} \subseteq X\}$ is a lattice filter of $\mathit{Def}$.
Using \eqref{eq:RS_characterization}, I.~D{\"u}ntsch \cite{Dunt97} noted that $\mathit{RS}$ coincides with
$\{ (A,B) \in \mathit{Def}^2 \mid A \subseteq B \ \text{ and } \ A \cup B^c \in {\uparrow}\mathcal{S} \}$.
From this it also follows that $\mathit{RS}$ is a regular double Stone algebra as shown in Example~\ref{Exa:DoubleStone}.
\end{example}

In  \cite{GeWa92}, M.~Gehrke and E.~Walker proved that $RS$ is order-isomorphic to $\mathbf{2}^I \times \mathbf{3}^K$, where $I$ is the set of singleton 
$E$-classes and $K$ is the set of nonsingleton equivalence classes of $E$. 
Comer \cite{Comer95} gave the following representation theorem of complete atomic regular double Stone algebras in terms of rough sets.

\begin{theorem} \label{thm:Comer}
Each  complete atomic regular double Stone algebra is isomorphic to some rough set double Stone algebra determined by an equivalence.
\end{theorem}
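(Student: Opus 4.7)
My plan is to reduce the theorem to a structural decomposition of $\alg{A}$ combined with an explicit computation of the rough set algebra on a tailored equivalence. Given a complete atomic regular double Stone algebra $\alg{A}$, the two steps are (i) to prove $\alg{A} \cong \mathbf{2}^I \times \mathbf{3}^K$ for suitable index sets $I, K$, and (ii) to construct an equivalence $E$ on the set $U := I \sqcup (K \times \{1,2\})$ whose singleton classes are indexed by $I$ and whose two-element classes are indexed by $K$, and then verify that $\mathit{RS}(E) \cong \mathbf{2}^I \times \mathbf{3}^K$. Composing the two isomorphisms will yield $\alg{A} \cong \mathit{RS}(E)$.

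For step (i), I would classify the atoms of $\alg{A}$ into $I := \{a \in \mathrm{At}(\alg{A}) : a^* = a^+\}$ (the ``Boolean'' atoms) and $K := \mathrm{At}(\alg{A}) \setminus I$ (the ``three-valued'' atoms). One checks that each $i \in I$ satisfies $i = i^{**}$, so that $\{0,i\}$ is a $\mathbf{2}$-factor, while for $k \in K$ one has $k < k^{**}$ and the chain $0 < k < k^{**}$ spans a subalgebra isomorphic to $\mathbf{3}$. The candidate isomorphism $\phi \colon A \to \mathbf{2}^I \times \mathbf{3}^K$ is defined by
\[
\phi(x)(i) := \begin{cases} 1 & \text{if } i \leq x, \\ 0 & \text{otherwise,} \end{cases} \qquad \phi(x)(k) := \begin{cases} 0 & \text{if } k \not\leq x, \\ 1/2 & \text{if } k \leq x \text{ and } k^{**} \not\leq x, \\ 1 & \text{if } k^{**} \leq x. \end{cases}
\]
That $\phi$ preserves the regular double Stone operations is a coordinate-wise verification from the identities of $^*$, $^+$ and the lattice operations; injectivity of $\phi$ follows from atomicity, since two distinct elements are always separated by some atom; and surjectivity exploits the completeness of $\alg{A}$ (to form arbitrary joins of atoms and of elements of the form $k^{**}$) together with the regularity condition~\eqref{eq:condM}.

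For step (ii), with $E$ defined as above, the set of singleton $E$-classes is $\mathcal{S} = I$, and by Theorem~\ref{Thm:Pagliani1} members of $\mathit{RS}(E)$ are pairs $(A, B)$ of $E$-definable sets with $A \subseteq B$ and $\mathcal{S} \subseteq A \cup B^c$. Reading off each $i$-coordinate (where $A$ and $B$ must agree, yielding a $\mathbf{2}$-valued datum) and each $k$-coordinate (with three possibilities according to whether both of $(k,1), (k,2)$ lie in $A$, neither lies in $A$ but both in $B$, or neither lies in $B$, yielding a $\mathbf{3}$-valued datum), one obtains a regular double Stone isomorphism $\mathit{RS}(E) \cong \mathbf{2}^I \times \mathbf{3}^K$; this essentially recovers the Gehrke--Walker formula mentioned just before the theorem.

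I expect the main obstacle to lie in step (i), specifically in establishing surjectivity of $\phi$, and in the auxiliary lemma characterising atoms with $i^* = i^+$ as precisely those satisfying $i = i^{**}$. Both points rely essentially on the regularity condition~\eqref{eq:condM}, which prevents two elements from having identical $(^*,^+)$-profiles without coinciding, and on completeness, which guarantees the existence of joins realising every prescribed profile. Without either hypothesis the algebra could contain ``exotic'' elements uncaught by $\phi$, so the delicate work is to marshal the three hypotheses (completeness, atomicity, regularity) simultaneously and translate them into the required bijection with a product of copies of $\mathbf{2}$ and $\mathbf{3}$.
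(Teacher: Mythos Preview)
The paper does not include its own proof of this theorem; it is stated and attributed to Comer~\cite{Comer95}. Your two-step strategy is correct and is precisely the one implicit in the paper's surrounding material: your step~(i) is the content of Proposition~\ref{prop:toComer} (specifically the implication (d)$\Rightarrow$(a), which the paper in turn attributes to \cite[Lemma~2.6]{Comer95}), and your step~(ii) is the Gehrke--Walker result quoted immediately before the theorem. So your approach and the paper's implicit route coincide.
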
  

As we already noted, there is a one-to-one correspondence between regular double Stone algebras and three-valued
{\L}ukasiewicz algebras. This implies the following corollary; cf.~\cite{Banerjee97, BanChak96, Dai08, Ittu98, Pagl96}.

\begin{corollary} For any equivalence, $\mathit{RS}$ forms a three-valued {\L}ukasiewicz algebra such that
\[ {\sim} (A,B) = (B^c, A^c),  \quad {\vartriangle} (A,B) = (A, A), \quad {\triangledown} (A, B) = (B, B) .\]
\end{corollary}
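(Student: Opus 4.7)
The plan is to derive the corollary by combining the correspondence, recalled in Subsection~\ref{ss:pseudo}, between regular double Stone algebras and three-valued {\L}u\-ka\-sie\-wicz algebras with Comer's result (\cite{Come93}, cited between Example~\ref{Exa:Iwo} and Theorem~\ref{thm:Comer}) that $\mathit{RS}$ forms a regular double Stone algebra with pseudocomplement $(A,B)^\ast = (B^c,B^c)$ and dual pseudocomplement $(A,B)^+ = (A^c,A^c)$. Once we have the double Stone structure on $\mathit{RS}$, the three-valued {\L}ukasiewicz operations are uniquely determined by
\[
\triangledown(A,B) := (A,B)^{\ast\ast}, \qquad \sim\! (A,B) := (A,B)^\ast \vee \bigl( (A,B) \wedge (A,B)^+\bigr),
\]
and $\vartriangle$ is defined dually as $\sim\!\triangledown\!\sim$; so nothing further needs to be verified except that the three concrete formulas in the statement coincide with these abstract definitions under Comer's identification of $^\ast$ and $^+$.

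First I would compute $\triangledown(A,B) = (B^c,B^c)^\ast = (B^{cc},B^{cc}) = (B,B)$, which gives the third formula. Next, for the strong negation, I would expand
\[
(A,B) \wedge (A,B)^+ = (A,B) \wedge (A^c,A^c) = (A \cap A^c,\ B \cap A^c) = (\emptyset, B \setminus A),
\]
and then
\[
(A,B)^\ast \vee \bigl((A,B) \wedge (A,B)^+\bigr) = (B^c,B^c) \vee (\emptyset, B \setminus A) = \bigl(B^c,\ B^c \cup (B \cap A^c)\bigr).
\]
The key simplification is $B^c \cup (B \cap A^c) = (B^c \cup B) \cap (B^c \cup A^c) = B^c \cup A^c = A^c$, where the last equality uses $A \subseteq B$, which is built into the definition of $\mathit{RS}$. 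This yields $\sim\!(A,B) = (B^c, A^c)$, as claimed. Finally, $\vartriangle(A,B) = \sim\!\triangledown\!\sim\!(A,B) = \sim\!\triangledown(B^c,A^c) = \sim\!(A^c,A^c) = (A,A)$.

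There is essentially no obstacle of substance: the entire argument is a direct translation of Comer's double Stone description of $\mathit{RS}$ through the known termwise dictionary between the two types of algebras. The only point requiring the slightest care is the simplification $B^c \cup (B \cap A^c) = A^c$, which is where the order constraint $A \subseteq B$ encoded in $\mathit{RS} \subseteq \mathit{Def}^{[2]}$ is used; everything else is coordinatewise set-theoretic computation on pairs of definable sets.
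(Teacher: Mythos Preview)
Your proposal is correct and follows essentially the same route as the paper, which simply remarks that the corollary follows from the one-to-one correspondence between regular double Stone algebras and three-valued {\L}ukasiewicz algebras together with Comer's description of the pseudocomplements on $\mathit{RS}$. The paper does not spell out the computations of $\triangledown$, $\sim$ and $\vartriangle$, so your explicit verification of the formulas (including the use of $A \subseteq B$ to simplify $B^c \cup (B \cap A^c)$) is a welcome addition rather than a deviation.
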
 
L.~Iturrioz \cite{Ittu98} also proved that each three-valued {\L}ukasiewicz algebra can be embedded into some rough set {\L}ukasiewicz algebra defined by
an equivalence. 

Pagliani considered in \cite{Pagl96} the so-called \emph{disjoint-representation} of rough sets
\[  \mathit{dRS} := \{ (X^\DOWN,X^{\UP c}) \mid X \subseteq U\}. \]
Obviously, there is a one-to-one correspondence between $\mathit{RS}$ and $\mathit{dRS}$. Pagliani defined a congruence 
$\theta$ on $(\mathit{Def}, \cup, \cap)$ by
\[ \theta := \{ (A,B) \mid (\exists Z \in {\uparrow}\mathcal{S}) \, A \cap Z = B \cap Z \}. \]
For $X \in \mathit{Def}$, $X \theta U$ iff there is a $Z \in {\uparrow}\mathcal{S}$ such that $X \cap Z = U \cap Z = Z$, that is, $Z \subseteq X$. This means that 
\[ X \theta U \iff \mathcal{S} \subseteq X. \]
This implies by \eqref{eq:RS_characterization} the following proposition:

\begin{proposition} \label{prop:pagliani2}
Let $E$ be an equivalence on $U$. Then
\[
\mathit{dRS} = \{ (A,B) \in \mathit{Def} \mid A \cap B = \emptyset \ \text{ and } \ (A \cup B) \theta U \}.
\]
\end{proposition}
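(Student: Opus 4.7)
The plan is to derive the characterisation of $\mathit{dRS}$ directly by transferring Theorem~\ref{Thm:Pagliani1} (the characterisation of $\mathit{RS}$) via the obvious bijection between $\mathit{RS}$ and $\mathit{dRS}$, and then rewriting the resulting side condition using the equivalence $X\theta U \iff \mathcal{S}\subseteq X$ that was just established immediately before the proposition.

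More concretely, the first step is to observe that the map $\Phi\colon \mathit{RS}\to \mathit{dRS}$ sending $(X^\DOWN,X^\UP)$ to $(X^\DOWN,X^{\UP c})$ is a bijection, and that $\mathit{Def}$ is closed under complementation (being a Boolean subalgebra of $\wp(U)$). Hence $(A,B)\in\mathit{dRS}$ if and only if $(A,B^c)\in\mathit{RS}$. Next, I would invoke Theorem~\ref{Thm:Pagliani1} to expand $(A,B^c)\in\mathit{RS}$ as the conjunction of (i) $A,B^c\in\mathit{Def}$, (ii) $A\subseteq B^c$, and (iii) $\mathcal{S}\subseteq A\cup (B^c)^c= A\cup B$. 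Since $\mathit{Def}$ is closed under complements, (i) is equivalent to $A,B\in \mathit{Def}$; and (ii) is just $A\cap B=\emptyset$.

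Finally, I would apply the observation recorded just before the proposition, namely that for any $X\in \mathit{Def}$ we have $X\theta U\iff \mathcal{S}\subseteq X$. Since $A\cup B\in\mathit{Def}$ (as $\mathit{Def}$ is closed under unions), condition (iii) $\mathcal{S}\subseteq A\cup B$ is equivalent to $(A\cup B)\theta U$. Combining these translations yields exactly the description
\[
\mathit{dRS}=\{(A,B)\in\mathit{Def}\mid A\cap B=\emptyset \text{ and } (A\cup B)\theta U\}.
\]

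There is no real obstacle here: everything follows routinely once one has noted the complementation bijection and invoked Theorem~\ref{Thm:Pagliani1} together with the characterisation $X\theta U\iff \mathcal{S}\subseteq X$. The only point requiring minimal care is verifying that the set-theoretic translation $A\subseteq B^c \Leftrightarrow A\cap B=\emptyset$ and $A\cup (B^c)^c=A\cup B$ are applied in the right direction, and that $\mathit{Def}$ is indeed closed under all the Boolean operations used, which is guaranteed by its being a complete Boolean subalgebra of $\wp(U)$ as recalled in Subsection~\ref{ss:rough_equivalence}.
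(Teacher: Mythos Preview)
Your proof is correct and follows essentially the same approach as the paper: the paper simply remarks that the proposition follows from the characterisation~\eqref{eq:RS_characterization} of $\mathit{RS}$ together with the just-established equivalence $X\theta U \iff \mathcal{S}\subseteq X$, which is precisely what you spell out via the complementation bijection $(A,B)\mapsto(A,B^c)$.
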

Because $\mathit{Def}$ is a complete Boolean algebra, the congruence $\theta$ is trivially a Boolean congruence. Therefore,
$\mathit{dRS}$ forms a Nelson algebra by  Sendlewski's construction~\cite{Send90}:
\[ N_{\theta}(\alg{L}) = \{ (a, b) \in L^2 \mid a \wedge b = 0 \text{ and } (a \vee b) \theta 1 \}, \]
where $\alg{L}$ is a Heyting algebra and $\theta$ is a Boolean congruence on $\alg{L}$.
Pagliani noted in \cite{Pagl96} that the Nelson algebras on $\mathit{dRS}$ and $\mathit{RS}$ are semisimple.

Let $(A,B)$ and $(C,D)$ be elements of $\mathit{RS}$. The Nelson implication is defined in $\mathit{RS}$ by
\[ (A,B) \to (C,D) = (A^c \cup C, A^c \cup D).\] 
Since every three-valued {\L}ukasiewicz algebra forms a Heyting algebra,
$\mathit{RS}$ is a Heyting algebra in which
\begin{equation} \label{eq:Heyting_equivalences}
(A,B) \Rightarrow (C,D) = ( (A^c \cup C) \cap (B^c \cup D), B^c \cup D).
\end{equation}
In \cite{Panicker19}, the authors consider so-called $C$-algebras. They 
show that $\mathit{dRS}$, with suitable operations, forms a $C$-algebra and
that each $C$-algebra can be embedded in $\mathit{dRS}$.

These results fit into a larger picture. Recall from universal algebra that the (\textit{ternary}) \textit{discriminator}
\cite[Definition~IV{\S}9.1]{Burr81} on a set~$A$ is the function $t \colon A^3 \to A$ defined for all $a, b, c \in A$ by
\begin{gather*}
t(a, b, c) := \begin{cases}
              c & \text{if $a = b$;} \\
              a & \text{otherwise}.
              \end{cases}
\end{gather*}
A (\textit{ternary}) \textit{discriminator variety} is a variety~$\class{V}$ for which there exists a ternary term $t(x, y, z)$ of~$\class{V}$
that realises the discriminator on each subdirectly irreducible member of~$\class{V}$; by an instructive characterisation due to
Blok and Pigozzi~\cite[Corollary~3.4]{Blok84} (see also \cite{Frie83}), an equational class forms a discriminator variety iff it is congruence
permutable, semisimple, and has Equationally Definable Principal Congruences (EDPC) in the sense of \cite{Kohl80}.

According to Burris and Sankappanavar \cite[Chapter IV{\S}9, 10]{Burr81}, discriminator varieties constitute
``\dots the most successful generalisation of Boolean algebras to date, successful because we obtain Boolean product representations.''
As such, discriminator varieties have been considered extensively in the literature---standard references include
Werner~\cite{Wern78} and J\'{o}nsson~\cite{Jons95}---and it is known, in particular, that 
the regular double Stone algebras,
the three-valued {\L}ukasiewicz algebras, and the semisimple Nelson algebras all form discriminator varieties.

\subsection{Complete lattices of rough sets defined by quasiorders} 
\label{ss:CLRSQ}

In the literature,  numerous studies exist on rough sets that are determined by different types of relations
reflecting distinguishability or indistinguishability of the elements of the universe of discourse $U$; see e.g.~\cite{DemOrl02}.
Rough sets induced by quasiorders have been in the focus of recent interest; see 
\cite{Ganter07,  Jarv13, Jarv11a,  Jarv14, JarRad21, JarRadVer09,
Kortelainen1994, KumBan2012, KumBan2015,  Umadevi13a, Qiao2012},
for example. In this section, we consider the order-theoretical properties of rough sets defined by a quasiorder.

Let $\lesssim$ be a quasiorder on $U$, that is, $\lesssim$ is reflexive and transitive binary relation on the set $U$.
The inverse of $\lesssim$ is $\gtrsim$. Denote $[ x ) = \{ y \in U \mid x \lesssim y \}$ and $( x ] = \{ y \in U \mid x \gtrsim y \}$.
We define the following rough approximation operators for any $X \subseteq U$: 
\[
\begin{gathered}
    X^\UP   := \{ x \in U \mid [x) \cap X \neq \emptyset\},  \qquad \qquad 
    X^\DOWN := \{ x \in U \mid [x) \subseteq X \},         \\
    X^\Up  := \{ x \in U \mid (x] \cap X \neq \emptyset\},  \qquad \qquad
    X^\Down := \{ x \in U \mid (x] \subseteq X \}.          
\end{gathered}
\]
By definition,
\[ X^{\DOWN c} = X^{c \UP}, \ X^{\Down c} = X^{c \Up}, \ X^{\UP c} = X^{c \DOWN}, \ X^{\Up c} = X^{c \Down}.\]  
In \cite{Kortelainen1994} it was noted that 
\begin{equation*}
X^{\UP \Down} = X^\UP, \ X^{\Up \DOWN} = X^\Up, \ X^{\DOWN \Up} = X^\DOWN, \ X^{\Down \UP} = X^\Down. 
\end{equation*}

A quasiorder $\lesssim$ can be interpreted as a \emph{specialisation order}, where $x \lesssim y$ may be read as ``$y$ is a specialisation of $x$''.
In \cite{Ganter07}, a specialisation order is viewed as ``non-symmetric indiscernibility'' such that each element is indistinguishable with all its
specialisations, but not necessarily the other way round. Then, in our interpretation, $x \in X^\UP$ means that there is at least one specialisation $y$ in $X$,
which  cannot be distinguished from $x$. Similarly, $x$ belongs to $X^\DOWN$ if all its specialisations are in $X$; this is then interpreted so that
$x$ needs to be in $X$ in the view of the knowledge $\lesssim$.

Each Alexandrov topology $\mathcal{T}$ on $U$ defines a quasiorder $\lesssim_\mathcal{T}$ on $U$ by $x \, \lesssim_\mathcal{T} \, y$ if and only if
$y \in N(x)$  for all $x, y \in U$. On the other hand, for a quasiorder $\lesssim$ on $U$, the set of all $\lesssim$-closed subsets of $U$, called
also the \emph{up-sets} of $(U, \lesssim)$, forms an Alexandrov topology $\mathcal{T}_\lesssim$.
Thus, $B \in \mathcal{T}_\lesssim$ if and only if $x \in B$ and $x \lesssim y$ imply $y \in B$. 
The correspondences $\mathcal{T} \mapsto {\lesssim_\mathcal{T}}$ and ${\lesssim} \mapsto \mathcal{T}_\lesssim$ are
mutually invertible bijections between the classes of all Alexandrov topologies and of all quasiorders on the set $U$.

We also have that
\begin{equation*}
\mathcal{T}_\lesssim = \{ X^\DOWN \mid X \subseteq U\} = \{ X^\Up \mid X \subseteq U\} 
\end{equation*}
and
\begin{equation*}
\mathcal{T}_\gtrsim = \{ X^\UP \mid X \subseteq U\} = \{ X^\Down \mid X \subseteq U\}. 
\end{equation*}
For the Alexandrov topology $\mathcal{T}_\gtrsim$,  $(x] = \{x\}^\UP$ is the smallest neighbourhood of the point $x \in U$ and
$^\UP \colon \wp(U) \to \wp(U)$ is the smallest neighbourhood operator. The map $^\Up \colon \wp(U) \to \wp(U)$ is the closure operator
of  $\mathcal{T}_\gtrsim$. Note that the family of closed sets for the topology $\mathcal{T}_\gtrsim$ is $\mathcal{T}_\lesssim$.
The map $^\Down \colon \wp(U) \to \wp(U)$ is the interior operator.
 
Similarly, $[x) = \{x\}^\Up$ is the smallest neighbourhood of a point $x \in U$ in $\mathcal{T}_\lesssim$. The map
$^\Up \colon \wp(U) \to \wp(U)$ is the smallest neighbourhood operator, $^\UP \colon \wp(U) \to \wp(U)$ is the closure operator and
$^\DOWN \colon \wp(U) \to \wp(U)$ is the interior operator
of $\mathcal{T}_\lesssim$.  

For a quasiorder $\lesssim$, we denote by $\mathit{RS}(\lesssim)$ the set of rough sets  $\{ (X^\DOWN, X^\UP) \mid X \subseteq U \}$
induced by $\lesssim$. As in the case of equivalences, we denote $\mathit{RS}(\lesssim)$ simply by $\mathit{RS}$ when there is no chance for
confusion.  The set $\mathit{RS}$ can be ordered coordinatewise and in \cite{JarRadVer09} J{\"a}rvinen, Radeleczki, and Veres  proved the following
theorem stating that, as in the case of equivalences, $\mathit{RS}$ forms a complete sublattice of $\wp(U) \times \wp(U)$.

\begin{theorem} \label{Thm:quasi_complete_lattice}
For any quasiorder, $\zseq{\mathit{RS}; \leq}$ forms a complete lattice such that for all $\{(A_i,B_i)\}_{i \in I} \subseteq \mathit{RS}$,
\[
\bigvee_{i \in I} ( A_, B_i ) = 
\big ( \bigcup_{i \in I} A_i, \bigcup_{i \in I} B_i \big )
\ \mbox{ and } \
\bigwedge_{i \in I} ( A_i, B_i ) = 
\big ( \bigcap_{i \in I} A_i, \bigcap_{i \in I} B_i \big ) .
\]
\end{theorem}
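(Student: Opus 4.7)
The plan is to establish the two claimed equations for $\bigvee$ and $\bigwedge$, which together say that $\mathit{RS}$ is a complete sublattice of $\wp(U) \times \wp(U)$. First I would record two basic distributivity identities, immediate from the element-level definitions of $^\UP$ and $^\DOWN$: for any family $\{X_i\}_{i \in I} \subseteq \wp(U)$,
\[
\bigl(\bigcup_{i} X_i\bigr)^\UP = \bigcup_i X_i^\UP \qquad \text{and} \qquad \bigl(\bigcap_{i} X_i\bigr)^\DOWN = \bigcap_i X_i^\DOWN.
\]
Using transitivity of $\lesssim$ I would also observe that every $X^\DOWN$ is a $\lesssim$-up-set and every $X^\UP$ is a $\lesssim$-down-set, so the unions $\bigcup_i A_i$ and $\bigcup_i B_i$ (respectively intersections) still belong to the proper classes.

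For the join case, fix $X_i$ with $(X_i^\DOWN, X_i^\UP) = (A_i, B_i)$ and set $X := \bigcup_i X_i$, $A := \bigcup_i A_i$, $B := \bigcup_i B_i$. The first distributivity identity already gives $X^\UP = B$, so $X$ is a candidate with the correct upper approximation, but in general $X^\DOWN \supsetneq A$, so $X$ must be thinned. I would construct a thinned representative $Y := A \cup T$ by invoking the Axiom of Choice: for each $z \in B \setminus A^\UP$, pick $f(z) \in X \setminus A$ with $z \lesssim f(z)$. Such an $f(z)$ exists, because $z \in X_i^\UP$ for some $i$ produces a witness $w \in X_i$ with $z \lesssim w$, and if $w$ lay in $A$ then $z$ would lie in $A^\UP$, against the choice of $z$. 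The choice must furthermore be made sparsely enough that for every $t \in T := \{ f(z) : z \in B \setminus A^\UP \}$ some element of $[t)$ lies outside $A \cup T$. With this, $Y^\UP = B$ follows since $A \subseteq Y \subseteq B$ and every $z \in B \setminus A^\UP$ is covered by its witness in $T$, while $Y^\DOWN = A$ follows because $A$ is an up-set (so $A = A^\DOWN \subseteq Y^\DOWN$) and the sparsity blocks any $y \in Y^\DOWN \setminus A$ (such $y$ would have to lie in $T$ with $[y) \subseteq A \cup T$, contradicting the thinning).

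The meet case is proved dually: start from $Z := \bigcap_i X_i$, which gives $Z^\DOWN = \bigcap_i A_i$ for free, and then enlarge $Z$ by adjoining witnesses from above each $z \in \bigcap_i B_i \setminus Z^\UP$ (again by the Axiom of Choice), chosen so as not to enlarge $Z^\DOWN$. The main obstacle in both directions lies precisely in the sparsity condition on the witness set. That no naive finite or greedy thinning will do in general is visible already in the case $U = \mathbb{N}$ with $X_n := \{n-1\}$ under the usual order, where $\bigcup_n X_n = \mathbb{N}$ yields the wrong lower approximation and one is forced to extract a cofinal subset of $\mathbb{N}$ whose complement is also cofinal (for example, the even naturals); a clean construction therefore essentially requires the Axiom of Choice or an equivalent transfinite recursion.
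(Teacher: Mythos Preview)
Your overall strategy---build a representative $Y=A\cup T$ for the join (and dually for the meet) by adjoining a set $T$ of witnesses chosen via the Axiom of Choice---is the right one, and your verification that $Y^\UP=B$ and $Y^\DOWN=A$ \emph{given} the sparsity condition on $T$ is correct. You have also correctly isolated the crux: the sparsity requirement that every $t\in T$ has a successor outside $A\cup T$ is not automatic from a naive choice of $f$, as your $\mathbb{N}$ example shows.

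The gap is that you never actually construct such a sparse $T$; you only assert that ``the choice must furthermore be made sparsely enough'' and then, in your final paragraph, concede that achieving this is ``the main obstacle'' and ``essentially requires the Axiom of Choice or an equivalent transfinite recursion''---without supplying either. That is precisely the step where the real content of the proof lies; everything else is bookkeeping. So as written the proposal is a reduction to an unproved lemma, not a proof.

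The paper's argument (from the cited source~[J\"arvinen--Radeleczki--Veres 2009], cf.\ the Remark following Theorem~\ref{thm:quasiorder_characterization}) closes exactly this gap by invoking Stone's theorem on cofinal partitions: a quasiordered set in which every element has at least $k$ successors can be partitioned into $k$ cofinal subsets. The key observation linking this to your setup is that no element of the critical region $B\setminus A$ is a singleton (since $A_i\cap\mathcal{S}=B_i\cap\mathcal{S}$ for each $i$ forces $A\cap\mathcal{S}=B\cap\mathcal{S}$), so every such element has at least two successors. Stone's theorem then yields two cofinal pieces: one serves as the witness set $T$, and the other automatically certifies sparsity, since every $t\in T$ has a successor in the other piece, hence outside $A\cup T$. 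This is the missing lemma you should invoke (or reprove via transfinite recursion) to complete your argument.
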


In \cite{Jarv11a} it is noted that
\[ \alg{RS} = \zseq{\mathit{RS}; \land, \lor, {\sim}, (\emptyset,\emptyset), (U,U)}\]
is a Kleene algebra such that
${\sim}(X^\DOWN,X^\UP) = (X^{c \DOWN}, X^{c \UP} )$. A De~Morgan algebra $\alg{A}$ is \emph{centered} if there exists an element such
that $c = {\sim}c$; this element $c$ is called the \emph{center} of $\alg{A}$. It is well known and obvious that a Kleene algebra can have 
at most one center. We also write $\mathcal{S} := \{ x \in U \mid [x) = \{x\} \}$ for the set of the \emph{singletons}.

Let $R$ be a binary relation on $U$ that is at least transitive. A \emph{successor} of $x \in U$ is an element $y \in U$
such that $x \, R \, y$. Let $X \subseteq Y \subseteq U$. Then, $X$ is \emph{cofinal in} $Y$ if each $x \in Y$ has a successor in $X$.
We also say that a set is \emph{cofinal}, if it is cofinal in $U$. M.~H.~Stone has proved \cite[Theorem~1]{Stone68} that
a necessary and sufficient condition that the set $U$ has a partition into $k$ cofinal subsets is that each element of $U$
has at least $k$ successors.

\begin{proposition} \label{prop:Centered}
The Kleene algebra $\mathbf{RS}$ is centered if and only if the set of $\lesssim$-singletons $\mathcal{S}$ is empty.
\end{proposition}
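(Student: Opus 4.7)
The plan is to first pin down the unique possible centre of $\mathbf{RS}$ and then determine when it is a legitimate rough set. Suppose $(A,B) \in \mathit{RS}$ satisfies ${\sim}(A,B) = (A,B)$. Using the description ${\sim}(X^\DOWN, X^\UP) = (X^{c\DOWN}, X^{c\UP})$ recalled above, together with the identities $X^{c\DOWN} = X^{\UP c}$ and $X^{c\UP} = X^{\DOWN c}$, we can rewrite ${\sim}(A, B) = (B^c, A^c)$. The fixed-point equation then forces $A = B^c$, which combined with the defining inclusion $A \subseteq B$ gives $A \subseteq A^c$, so $A = \emptyset$ and $B = U$. Therefore $\mathbf{RS}$ is centred if and only if $(\emptyset, U) \in \mathit{RS}$.

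The task thus reduces to characterising when $(\emptyset, U)$ is a rough set. For the `only if' direction I argue by contrapositive: fix $s \in \mathcal{S}$, so that $[s) = \{s\}$. Then for every $X \subseteq U$,
\[
 s \in X^\DOWN \ \iff \ [s) \subseteq X \ \iff \ s \in X \ \iff \ [s) \cap X \neq \emptyset \ \iff \ s \in X^\UP,
\]
which precludes the simultaneous equalities $X^\DOWN = \emptyset$ and $X^\UP = U$, and so $(\emptyset, U) \notin \mathit{RS}$. Conversely, assume $\mathcal{S} = \emptyset$. Then reflexivity of $\lesssim$ combined with $[x) \neq \{x\}$ yields $|[x)| \geq 2$ for every $x \in U$, so every element of $U$ has at least two $\lesssim$-successors. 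Invoking Stone's partition theorem recalled in Subsection~\ref{ss:rough_equivalence} with $k = 2$, we obtain a partition $U = X \cup Y$ (with $X \cap Y = \emptyset$) into two cofinal subsets, i.e.\ $[x) \cap X \neq \emptyset$ and $[x) \cap Y \neq \emptyset$ for every $x \in U$. Translating, $X^\UP = U$ and $Y^\UP = X^{c \UP} = X^{\DOWN c} = U$, whence $X^\DOWN = \emptyset$ and $(\emptyset, U) = (X^\DOWN, X^\UP) \in \mathit{RS}$, as required.

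The main delicate step is the application of Stone's theorem: one must verify that the clause ``each element of $U$ has at least $k$ successors'' is really satisfied under $\mathcal{S} = \emptyset$ with the reflexive reading of $\lesssim$, and that the cofinal subsets produced can indeed be chosen to be disjoint and to cover $U$. Once this is settled, the remainder of the argument amounts to routine unfolding of definitions, exploiting the interplay between the operators ${}^\UP$ and ${}^\DOWN$, set complementation, and the shape of the De~Morgan negation on $\mathit{RS}$.
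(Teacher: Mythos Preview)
Your proof is correct and follows essentially the same approach as the paper: both directions hinge on identifying $(\emptyset,U)$ as the only possible centre, using Stone's cofinality theorem to realise it when $\mathcal{S}=\emptyset$, and the singleton equivalence $s\in X^\DOWN \iff s\in X^\UP$ to rule it out otherwise. Your derivation of $X^\DOWN=\emptyset$ via $Y^\UP = X^{c\UP} = X^{\DOWN c}$ is in fact slightly cleaner than the paper's contradiction argument; one trivial correction: Stone's theorem is recalled in Subsection~\ref{ss:CLRSQ}, not~\ref{ss:rough_equivalence}.
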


\begin{proof}
Indeed, if $\mathcal{S} = \emptyset$, then each element $x \in U$ has at least two successors. This implies that
$U$ can be divided into two cofinal subsets $X$ and $Y$. Obviously, $X^\UP = U$. Suppose that $X^\DOWN \neq \emptyset$,
Then there is $x \in U$ such that $x \in [x) \subseteq X$. But this is not possible because $Y$ is cofinal in $U$ and
$x$ has a successor in $Y$. Since $X$ and $Y$ are disjoint, we have a contradiction. Thus, $X^\DOWN = \emptyset$ and
$(\emptyset,U)$ is the unique center of $\alg{RS}$.

On the other hand, suppose that there is a center $(C^\DOWN,C^\UP)$ in $\mathit{RS}$.
Then $(C^\DOWN,C^\UP) = (C^\DOWN,C^\UP) \wedge {\sim} (C^\DOWN,C^\UP) = (C^\DOWN \cap C^{\UP c}, C^\UP \cap C^{\DOWN c})$
giving $C^\DOWN = C^\DOWN \cap C^{\UP c} \subseteq C^\DOWN \cap C^{\DOWN c} = \emptyset$. Analogously,
$(C^\DOWN,C^\UP) = (C^\DOWN,C^\UP) \vee {\sim} (C^\DOWN,C^\UP) = (C^\DOWN \cup C^{\UP c}, C^\UP \cup C^{\DOWN c})$ yields
$C^\UP = C^\UP \cup C^{\DOWN c} \supseteq C^\DOWN \cup C^{\DOWN c} = U$. Thus, $(C^\DOWN,C^\UP) = (\emptyset, U)$.
Assume $\mathcal{S} \neq \emptyset$. Then, there is $x \in U$ such that $[x) = \{x\}$. Because $x \in C^\UP = U$, we have
$x \in C$ and $x \in C^\DOWN$, which is impossible because $C^\DOWN = \emptyset$. Thus, $\mathcal{S} = \emptyset$.
\end{proof}

The characterisation of $\mathit{RS}$ was given by J{\"a}rvinen, Pagliani and Radeleczki \cite{JarvPaglRade11, Jarv13} and independently by
E.~K.~R. Nagarajan and D.~Umadevi \cite{Umadevi13a} and Quanxi Qiao \cite{Qiao2012}:

\begin{theorem} \label{thm:quasiorder_characterization}
For any quasiorder $\lesssim$,
\[
\mathit{RS} = \{ (A,B) \in \mathcal{T}_\lesssim \times \mathcal{T}_\gtrsim \mid A \subseteq B \ \text{and} \ A \cap \mathcal{S} = B \cap \mathcal{S} \}. 
\]
\end{theorem}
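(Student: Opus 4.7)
The proof splits into the two inclusions.

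\textbf{Forward inclusion.} For arbitrary $X \subseteq U$ I would verify the three clauses defining the right-hand side in turn. The memberships $X^\DOWN \in \mathcal{T}_\lesssim$ and $X^\UP \in \mathcal{T}_\gtrsim$ are immediate from the displays recalled in Subsection~\ref{ss:CLRSQ}. The inclusion $X^\DOWN \subseteq X^\UP$ holds because any $x \in X^\DOWN$ satisfies $x \in [x) \subseteq X$, so $x \in [x) \cap X$ witnesses $x \in X^\UP$. For the last clause, $x \in \mathcal{S}$ forces $[x) = \{x\}$, so both $[x) \subseteq X$ and $[x) \cap X \neq \emptyset$ collapse to $x \in X$, yielding $X^\DOWN \cap \mathcal{S} = X \cap \mathcal{S} = X^\UP \cap \mathcal{S}$.

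\textbf{Reverse inclusion: setup.} Fix $(A,B)$ satisfying the three conditions and look for $X \subseteq U$ with $X^\DOWN = A$ and $X^\UP = B$. Since $A$ is a $\lesssim$-up-set and $B$ a $\lesssim$-down-set, this is equivalent to producing $X$ with $A \subseteq X \subseteq B$ such that, for every $x \in B \setminus A$, both $[x) \cap X \neq \emptyset$ and $[x) \not\subseteq X$. The hypothesis $A \cap \mathcal{S} = B \cap \mathcal{S}$ combined with $A \subseteq B$ forces $(B \setminus A) \cap \mathcal{S} = \emptyset$, so every $x \in B \setminus A$ has $|[x)| \geq 2$. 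I would then partition $B \setminus A = V \sqcup U'$, where $U' := \{x \in B \setminus A : [x) \subseteq B \setminus A\}$ is easily seen to be a $\lesssim$-up-set and $V := (B \setminus A) \setminus U'$ is the remainder.

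\textbf{Main obstacle and construction.} For $x \in V$, the condition $[x) \not\subseteq B \setminus A$ forces $[x) \cap A \neq \emptyset$ or $[x) \setminus B \neq \emptyset$; either way a natural witness for the two required conditions at $x$ is available irrespective of the construction inside $U'$. The true obstacle is $U'$: every $\lesssim$-successor of a point of $U'$ lies again in $U'$, so the witnesses for $x \in U'$ must be produced \emph{within} $U'$ by a global combinatorial choice. Here I would invoke Stone's cofinal partition theorem (cited in Subsection~\ref{ss:CLRSQ}) applied to $(U', \lesssim)$: since each $x \in U'$ has at least two $\lesssim$-successors, all in $U'$, the set $U'$ splits as $C' \sqcup D'$ with both pieces cofinal in $U'$, meaning $[x) \cap C' \neq \emptyset$ and $[x) \cap D' \neq \emptyset$ for every $x \in U'$. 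Setting $X := A \cup C' \cup \{x \in V : [x) \cap A = \emptyset\}$, a short case split on whether $x$ lies in $U'$, in $V$ with $[x) \cap A \neq \emptyset$, or in $V$ with $[x) \cap A = \emptyset$ (using $[x) \setminus B \neq \emptyset$ in the last case to supply an element outside $X$) verifies both required conditions at every $x \in B \setminus A$, completing the proof.
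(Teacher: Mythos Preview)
Your proof is correct and follows the approach the paper indicates. The paper does not supply a full proof of this theorem (it is attributed to \cite{JarvPaglRade11,Jarv13,Umadevi13a,Qiao2012}), but the Remark immediately following it singles out Stone's cofinal partition result as the essential ingredient; your argument invokes precisely that result on the up-set $U' = \{x \in B \setminus A : [x) \subseteq B \setminus A\}$, and the remaining case analysis is routine and correctly handled.
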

Equivalently we can write 
\[ 
\mathit{RS} = \{ (A,B) \in \mathcal{T}_\lesssim \times \mathcal{T}_\gtrsim \mid  A \subseteq B \ \text{and} \ \mathcal{S} \subseteq A \cup B^c  \}. 
\]

\begin{remark}
The proofs of Theorems \ref{Thm:quasi_complete_lattice} and \ref{thm:quasiorder_characterization}
require the condition that $U$ has a partition into $k$ cofinal subsets if and only if every element of $U$ has at least $k$
successors. As we already mentioned, J.~Pomyka{\l }a and J.~A.~Pomyka{\l }a showed in \cite{PomPom88} that 
for equivalence relations, $\mathit{RS}$ is a Stone lattice. In their proof they 
used Zermelo's Axiom of Choice. Interestingly, the proof of the condition for cofinality by Stone also requires
Axiom of Choice.
\end{remark}

Because $\alg{RS} = \zseq{\mathit{RS}; \leq}$ is a complete sublattice of $\zseq{\wp(U) \times \wp(U); \leq}$, $\alg{RS}$ is completely distributive.
This means that $\alg{RS}$ forms a Heyting algebra. Although this fact has been well known, in the literature one cannot find
the description of the relative pseudocomplement operation. Our next proposition removes this disadvantage.
For the proof, note that Theorem~\ref{thm:quasiorder_characterization} means that $(A,B)$ is a rough set
iff $A \subseteq B$ and for $x \in \mathcal{S}$, $x \in B$ implies $x \in A$.

\begin{proposition} \label{prop:HeytingImplication}
For rough sets $(A,B)$ and $(C,D)$ determined by a quasiorder $\lesssim$,
\[ (A,B) \Rightarrow (C,D) = ((A^c \cup C)^\DOWN \cap (B^c \cup D)^{\Down \DOWN}, (B^c \cup D)^\Down). \]  
\end{proposition}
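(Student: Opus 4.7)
The plan is to exploit the fact, established in Theorem~\ref{Thm:quasi_complete_lattice} and Section~\ref{ss:pseudo}, that $\mathit{RS}$ is a complete Heyting algebra, so that $(A,B) \Rightarrow (C,D)$ can be computed as the join of all rough sets $(E,F)$ satisfying $(A,B) \wedge (E,F) \leq (C,D)$. Because meets and order in $\mathit{RS}$ are coordinatewise, this condition simplifies to $E \subseteq A^c \cup C$ and $F \subseteq B^c \cup D$. Hence my task reduces to identifying, explicitly, the largest pair $(E,F)$ meeting these two inclusions and also satisfying the characterisation of $\mathit{RS}$ given by Theorem~\ref{thm:quasiorder_characterization}, namely $E \in \mathcal{T}_\lesssim$, $F \in \mathcal{T}_\gtrsim$, $E \subseteq F$, and $E \cap \mathcal{S} = F \cap \mathcal{S}$.

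For the second coordinate, the maximal candidate is forced: since $^\Down$ is the $\mathcal{T}_\gtrsim$-interior operator, the largest element of $\mathcal{T}_\gtrsim$ contained in $B^c \cup D$ is $F_0 := (B^c \cup D)^\Down$. For the first coordinate, any admissible $E$ must be $\lesssim$-open, contained in $A^c \cup C$, and (from $E \subseteq F \subseteq F_0$) contained in $F_0$. Because $^\DOWN$ is the $\mathcal{T}_\lesssim$-interior and commutes with finite intersections (Alexandrov topologies are closed under intersections), the largest such $E$ is
\[
E_0 \ := \ \big((A^c \cup C) \cap (B^c \cup D)^\Down\big)^\DOWN \ = \ (A^c \cup C)^\DOWN \cap (B^c \cup D)^{\Down\DOWN},
\]
matching the claimed formula.

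It remains to check that $(E_0, F_0)$ actually belongs to $\mathit{RS}$ and that it dominates every admissible pair. The inclusion $E_0 \subseteq F_0$ is immediate from $E_0 \subseteq (B^c \cup D)^{\Down\DOWN} \subseteq (B^c \cup D)^\Down = F_0$. The singleton condition $E_0 \cap \mathcal{S} = F_0 \cap \mathcal{S}$ is the point where the fact that $(A,B),(C,D) \in \mathit{RS}$ has to be used: from $A \cap \mathcal{S} = B \cap \mathcal{S}$ and $C \cap \mathcal{S} = D \cap \mathcal{S}$ one deduces $(A^c \cup C) \cap \mathcal{S} = (B^c \cup D) \cap \mathcal{S}$. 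Then for any $x \in F_0 \cap \mathcal{S}$ we have $[x) = \{x\}$ and $x \in B^c \cup D$, hence $x \in A^c \cup C$; since $[x) = \{x\}$ lies in both $A^c \cup C$ and $F_0$, we get $x \in (A^c \cup C)^\DOWN \cap (B^c \cup D)^{\Down\DOWN} = E_0$. Finally, verifying $(A,B) \wedge (E_0,F_0) \leq (C,D)$ is immediate from $E_0 \subseteq A^c \cup C$ and $F_0 \subseteq B^c \cup D$, while maximality follows because any admissible $(E',F') \in \mathit{RS}$ satisfies $F' \subseteq F_0$ by $\mathcal{T}_\gtrsim$-openness, and then $E' \subseteq (A^c \cup C)^\DOWN$ by $\mathcal{T}_\lesssim$-openness together with $E' \subseteq F' \subseteq F_0$ yielding $E' \subseteq (B^c \cup D)^{\Down\DOWN}$.

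The main obstacle is precisely the verification of the singleton condition for $(E_0, F_0)$: one must notice that the hypotheses ``$(A,B)$ and $(C,D)$ are rough sets'' are needed exactly to force $(A^c \cup C)$ and $(B^c \cup D)$ to coincide on $\mathcal{S}$, without which the naive candidate could fail to lie in $\mathit{RS}$. Once this matching on $\mathcal{S}$ is observed, the rest of the argument is a routine manipulation of the Alexandrov interiors $^\DOWN$ and $^\Down$.
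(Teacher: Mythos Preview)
Your proposal is correct and follows essentially the same approach as the paper's proof: verify that the candidate pair is a rough set, check that it satisfies the required inequality, and then show it dominates every admissible pair. The only minor difference is in the verification of the singleton condition: the paper uses $A \subseteq B$ to pass from $B^c \cup D$ to $A^c \cup D$ and then invokes only the singleton property of $(C,D)$, whereas you invoke the singleton properties of both $(A,B)$ and $(C,D)$ to get $(A^c \cup C)\cap\mathcal{S} = (B^c \cup D)\cap\mathcal{S}$ directly; both arguments are equally valid.
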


\begin{proof} Clearly, $(A^c \cup C)^\DOWN \cap (B^c \cup D)^{\Down \DOWN}$ belongs to $\mathcal{T}_\lesssim$ and
$(B^c \cup D)^\Down$ is in $\mathcal{T}_\gtrsim$. It is also obvious that 
$(A^c \cup C)^\DOWN \cap (B^c \cup D)^{\Down \DOWN} \subseteq (B^c \cup D)^\Down$.

Let $x \in \mathcal{S}$ be a singleton such that $x \in (B^c \cup D)^\Down$. Because $[x) = \{x\}$, we have
$x \in (B^c \cup D)^{\Down \DOWN}$. In addition, $x \in B^c \cup D$ implies $x \in A^c \cup D$ because $A \subseteq B$.
Again, $[x) = \{x\}$ implies $x \in (A^c \cup D)^{\DOWN}$. If $x \in D$, then $x \in C$ because $(C,D)$ is a rough set
and $x$ is a singleton. We have $x \in (A^c \cup C)$ and $x \in  (A^c \cup C)^\DOWN$. Thus, we have proved that 
$((A^c \cup C)^\DOWN \cap (B^c \cup D)^{\Down \DOWN}, (B^c \cup D)^\Down)$ is a rough set.

It is also clear that
\[
\begin{gathered}
  (A,B) \wedge ((A^c \cup C)^\DOWN \cap (B^c \cup D)^{\Down \DOWN}, (B^c \cup D)^\Down)  \subseteq 
  (A \cap (A^c \cup C), B \cap (B^c \cup D)) \\
   = (A \cap C, B \cap D) \subseteq (C,D).
\end{gathered}
\]

Now assume that $(A,B) \wedge (X,Y) \leq (C,D)$ for some rough set $(X,Y)$. Then $A \cap X \subseteq C$ and
$B \cap Y \subseteq D$ imply $X \subseteq A^c \cup C$ and $Y \subseteq B^c \cup D$. Because $X \in \mathcal{T}_\lesssim$
and $Y \in \mathcal{T}_\gtrsim$, we have $X = X^\DOWN \subseteq (A^c \cup C)^\DOWN$ and $Y = Y^\Down \subseteq (B^c \cup D)^\Down$.
Now $X \subseteq Y$ implies $X = X^\DOWN \subseteq Y^\DOWN \subseteq (B^c \cup D)^{\Down \DOWN}$. Combining these, we get
$X \subseteq (A^c \cup C)^\DOWN \cap  (B^c \cup D)^{\Down \DOWN}$. This means that have now shown that
\[ (X,Y) \leq ((A^c \cup C)^\DOWN \cap (B^c \cup D)^{\Down \DOWN}, (B^c \cup D)^\Down),\]
which completeness the proof.
\end{proof}

Note that if $\lesssim$ is an equivalence $E$ on $U$, then $\lesssim$ is symmetric and we have $X^\DOWN = X^\Down$ and
$X^\UP = X^\Up$ for all $X \subseteq U$. Additionally, $\mathcal{T}_\lesssim = \mathcal{T}_\gtrsim$
equals the family of $E$-definable sets. This means that the operation of Proposition~\ref{prop:HeytingImplication}
becomes
\[ (A,B) \Rightarrow (C,D) = ((A^c \cup C) \cap (B^c \cup D), B^c \cup D), \]
which coincides with \eqref{eq:Heyting_equivalences}.

In any Heyting algebra $\alg{L}$, the pseudocomplement $x^*$ equals $x \Rightarrow 0$. Therefore, for any rough set
$(A,B)$, we obtain
\[
  (A,B)^* = \big( (A,B) \Rightarrow (\emptyset,\emptyset)\big) = (A^{c \DOWN} \cap B^{c \Down \DOWN}, B^{c \Down}) = (B^{\Up \UP c}, B^{\Up c}).
\]
Note that $A \subseteq B$ gives $B^c \subseteq A^c$ and $B^{c \Down \DOWN} \subseteq A^{c \Down \DOWN} = A^{\Up c \DOWN} = A^{c \DOWN}$. Because
$A \in \mathcal{T}_\lesssim$, $A^\Up = A$. Now $(A,B)^*$ coincides with the pseudocomplement given in
\cite[Proposition~4.2]{JarRadVer09}. In addition,
\[ \alg{RS} = (\mathit{RS}, \land, \vee, {\sim}, ^*, (\emptyset,\emptyset), (U,U) ) \]
forms a pseudocomplemented Kleene algebra in which the dual pseudocomplement is defined by $(A,B)^+ = (A^{\Down c}, A^{\Down \DOWN c})$,
as stated in \cite[Proposition~4.3]{JarRadVer09}.

A pseudocomplemented De Morgan algebra $\alg{A}$ is called \emph{normal} if
\[ x^* \leq {\sim} x \leq x^+. \]
Let $(A,B) \in \mathit{RS}$. Then $A \subseteq B$ and $B^c \subseteq A^c$ give
\[ (B^{c \Down \DOWN}, B^{c \DOWN}) \leq (B^c, A^c) \leq (A^{c \Up}, A^{c \Up \UP}) , \]
that is $(A,B)^* \ \leq \ {\sim}(A,B) \ \leq \ (A,B)^+$ and $\alg{RS}$ is normal.
  
\medskip

Completely join irreducible elements of $\mathit{RS}$ were found in \cite{JarRadVer09}:
\begin{proposition} \label{Prop:join_irreducibles}
Let $\lesssim$ be a quasiorder on $U$. Then,
\[
\mathcal{J} = \{ (\emptyset,\{x\}^\UP) \mid \text{card} \big ( [x) \big ) \geq 2\} \cup \{ ( \{x\}^\Up,\{x\}^{\Up \UP}) \mid x \in U\}.
\]
\end{proposition}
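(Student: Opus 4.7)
The plan is to establish the claimed set-equality via two inclusions: for $(\supseteq)$ I will verify that each element of the two listed families is a completely join-irreducible rough set, and for $(\subseteq)$ I will write an arbitrary rough set $(A,B) \in \mathit{RS}$ as a componentwise join of elements of those families, so that if $(A,B)$ is itself completely join-irreducible it must equal one of the summands. Throughout I will use the identities $\{x\}^\UP = (x]$ and $\{x\}^\Up = [x)$, the characterisation of $\mathit{RS}$ from Theorem~\ref{thm:quasiorder_characterization}, and the componentwise description of joins from Theorem~\ref{Thm:quasi_complete_lattice}.

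For the membership direction, I first check that both families lie in $\mathit{RS}$. The pair $([x),[x)^\UP)$ has a $\lesssim$-up-set as first coordinate and a $\lesssim$-down-set as second; for the singleton clause of Theorem~\ref{thm:quasiorder_characterization}, any $s \in [x)^\UP \cap \mathcal{S}$ satisfies $\{s\} = [s)$ and $[s) \cap [x) \neq \emptyset$, hence $s \in [x)$. For $(\emptyset,(x])$ the delicate point is $(x] \cap \mathcal{S} = \emptyset$: an $s \in (x] \cap \mathcal{S}$ would give $x \in [s) = \{s\}$, hence $x = s$ and $|[x)| = 1$, contradicting the side condition $\text{card}([x)) \geq 2$. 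Complete join-irreducibility is then a direct calculation via componentwise joins. For instance, if $([x),[x)^\UP) = \bigvee_i (A_i,B_i)$, an index $i_0$ with $x \in A_{i_0}$ exists (since $\bigcup_i A_i = [x) \ni x$); then $A_{i_0}$ is a $\lesssim$-up-set squeezed between $[x)$ and $[x)$, hence $A_{i_0} = [x)$, and consequently $B_{i_0}$ is a $\lesssim$-down-set squeezed between $[x)$ and $[x)^\UP$, hence $B_{i_0} = [x)^\UP$. The argument for $(\emptyset,(x])$ is analogous but simpler.

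For the exhaustiveness direction, let $(A,B) \in \mathit{RS}$ be completely join-irreducible, so in particular $(A,B) \neq (\emptyset,\emptyset)$. The key claim is the decomposition
\[
(A,B) \;=\; \bigvee_{y \in A} \bigl([y),[y)^\UP\bigr) \;\vee\; \bigvee_{y \in B \setminus A} \bigl(\emptyset,(y]\bigr).
\]
Componentwise, the first coordinate of the right-hand side equals $\bigcup_{y \in A}[y) = A$ since $A$ is a $\lesssim$-up-set; the second coordinate contains $B$ (every $y \in B$ lies either in some $[y)$ or in $(y]$) and is contained in $B$ (each $(y] \subseteq B$ because $B$ is a $\lesssim$-down-set, and each $[y)^\UP \subseteq B$ because $[y) \subseteq A \subseteq B$ and again $B$ is a $\lesssim$-down-set). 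Complete join-irreducibility of $(A,B)$ now forces $(A,B)$ to coincide with one of the summands, which by construction belongs to one of the two listed families.

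The one delicate point that amounts to the main obstacle is ensuring that every summand in the above decomposition really is a rough set; this is exactly where the singleton clause of Theorem~\ref{thm:quasiorder_characterization} intervenes and motivates the side condition $\text{card}([x)) \geq 2$ in the first family. Concretely, for $y \in B \setminus A$ one must verify $y \notin \mathcal{S}$: if $y$ were a singleton then $y \in B \cap \mathcal{S} = A \cap \mathcal{S} \subseteq A$, contradicting $y \in B \setminus A$; hence $|[y)| \geq 2$ and the summand $(\emptyset,(y])$ really lies in the first listed family. All remaining steps reduce to routine bookkeeping about $\lesssim$-up-sets, $\lesssim$-down-sets, and their closures, driven by the componentwise character of joins in $\mathit{RS}$.
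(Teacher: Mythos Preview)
Your proof is correct. The paper itself does not supply a proof of this proposition; it merely cites the result from \cite{JarRadVer09}. Your argument is the natural one: verify membership via the rough-set characterisation of Theorem~\ref{thm:quasiorder_characterization} and check join-irreducibility componentwise, then show exhaustiveness by writing an arbitrary $(A,B)\in\mathit{RS}$ as a join of elements from the two listed families. The handling of the singleton clause (forcing $|[y)|\ge 2$ for $y\in B\setminus A$, and showing $(x]\cap\mathcal{S}=\emptyset$ under that side condition) is exactly the place where the characterisation intervenes, and you treat it correctly.
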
  
Because the complete lattice $\alg{RS} = \zseq{\mathit{RS};\leq}$ is algebraic and completely distributive, $\alg{RS}$ is spatial.
Note that Proposition~\ref{Prop:join_irreducibles} implies that if $\mathcal{S} = \emptyset$, then $(\emptyset, U)$ is a rough set; cf.~Proposition~\ref{prop:Centered}.

The dually pseudocomplemented distributive lattice $\alg{RS}$ is not generally regular or a Stone lattice.
For a quasiorder $\lesssim$ on $U$, the smallest equivalence containing $\lesssim$ is the transitive closure of the relation
${\lesssim} \cup {\gtrsim}$.  The following characterisation is presented in \cite{JarRadVer09}.

\begin{proposition} \label{prop:QuasiStone}
Let $\lesssim$ be a quasiorder. The following are equivalent:
\begin{enumerate}[label = {\rm (\alph*)}]  
\item $\zseq{\mathit{RS}; \vee,\wedge,{^*},{^+},(\emptyset,\emptyset), (U,U)}$ is a (double) Stone lattice;
\item ${\gtrsim} \circ {\lesssim}$ is equal to  the smallest equivalence containing $\lesssim$.
\end{enumerate}
\end{proposition}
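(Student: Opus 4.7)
The plan is to reduce condition~(b) to a transitivity condition and then analyse the Stone identity concretely. First, the relation $\gtrsim \circ \lesssim$ is always reflexive, symmetric, and contains $\lesssim \cup \gtrsim$; hence it is contained in the smallest equivalence containing $\lesssim$, and (b) is equivalent to the transitivity of $\gtrsim \circ \lesssim$. Second, a direct check with the pseudocomplement formulas shows that $\nnot((A,B)^*) = (\nnot(A,B))^+$, so the De~Morgan negation $\nnot$ converts the Stone identity into the dual Stone identity and conversely; hence it suffices to treat the Stone identity $(A,B)^* \vee (A,B)^{**} = (U,U)$.

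Iterating $(A,B)^* = (B^{\Up\UP c}, B^{\Up c})$ yields $(A,B)^{**} = ((B^{\Up c})^{\Up\UP c}, (B^{\Up c})^{\Up c})$, so the Stone identity unfolds coordinate-wise to
\[
B^\Up \cap (B^{\Up c})^\Up = \emptyset \quad \text{and} \quad B^{\Up\UP} \cap (B^{\Up c})^{\Up\UP} = \emptyset .
\]
For (b)~$\Rightarrow$~(a), assuming $\gtrsim \circ \lesssim$ is transitive and using that $B$ is a $\lesssim$-down-set (cf.~Theorem~\ref{thm:quasiorder_characterization}), I would argue that $B^\Up$ is not only a $\lesssim$-up-set but also a $\lesssim$-down-set, hence a union of $(\gtrsim \circ \lesssim)$-equivalence classes; the analogous property holds for $(B^{\Up c})^\Up$. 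A shared class would then contain some $b \in B$ and $z \in B^{\Up c}$ with a common lower bound $w$, forcing $w \in B \cap B^{\Up c} = \emptyset$---a contradiction. This also gives $B^{\Up\UP} = B^\Up$ (and similarly for $B^{\Up c}$), so the second identity follows automatically.

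For (a)~$\Rightarrow$~(b), I would apply the Stone identity to the rough set $((b] \cap \mathcal{S}, (b])$, which is admissible by Theorem~\ref{thm:quasiorder_characterization}. The second-coordinate identity then yields the propagation property: for all $x, z, b$ with $(x, b) \in \gtrsim \circ \lesssim$ and $z \lesssim x$, we have $(z, b) \in \gtrsim \circ \lesssim$. Given $(a, b), (b, c) \in \gtrsim \circ \lesssim$, pick a common lower bound $w$ of $b$ and $c$ and apply this propagation to $(b, a) \in \gtrsim \circ \lesssim$ with $z = w$: then $(w, a) \in \gtrsim \circ \lesssim$, so $w$ and $a$ share a common lower bound $v$; since $v \lesssim w \lesssim c$ and $v \lesssim a$, the element $v$ is also a common lower bound of $a$ and $c$, whence $(a, c) \in \gtrsim \circ \lesssim$. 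The main obstacle I anticipate is the careful argument that $B^\Up$ is a $\lesssim$-down-set under the transitivity hypothesis---this is where the structural condition that $B$ itself is a $\lesssim$-down-set enters crucially---together with the bookkeeping needed to confirm that the second Stone condition is really the one to test on principal ideals.
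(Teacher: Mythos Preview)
The paper does not supply its own proof of this proposition; it is quoted from \cite{JarRadVer09} without argument. Your proposal is a correct self-contained proof, and only two steps deserve to be made more explicit.

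In the direction (b)~$\Rightarrow$~(a), the claim that $B^\Up$ is a $\lesssim$-down-set uses more than transitivity of $\gtrsim\circ\lesssim$ at face value: from $b'\lesssim x$ and $z\lesssim x$ you only get $(b',z)\in\lesssim\circ\gtrsim$ (a common upper bound), not directly a common lower bound. The missing link is that transitivity of $\gtrsim\circ\lesssim$ makes it an equivalence, hence equal to the smallest equivalence containing $\lesssim$, which in turn contains $\lesssim\circ\gtrsim$. Thus $(b',z)\in\lesssim\circ\gtrsim\subseteq\gtrsim\circ\lesssim$ gives a common lower bound $w$; downward closure of $B$ then yields $w\in B$ and $z\in B^\Up$. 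Once you state this inclusion explicitly, your contradiction argument and the reduction $B^{\Up\UP}=B^\Up$ go through.

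In the direction (a)~$\Rightarrow$~(b), note that the second-coordinate condition $B^\Up\cap (B^{\Up c})^\Up=\emptyset$ is equivalent to $(B^{\Up c})^\Up\subseteq B^{\Up c}$, i.e., to $B^\Up$ being a down-set. Taking $B=(b]$, the set $(b]^\Up$ is exactly $\{x:(x,b)\in\gtrsim\circ\lesssim\}$, so its being a down-set is precisely your propagation property. Your deduction of transitivity from propagation is clean. The choice of $((b]\cap\mathcal S,(b])$ as the test rough set is fine, though any rough set with second coordinate $(b]$ (e.g.\ $((b]^\DOWN,(b])$) would do.
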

  
For any binary relation $R$ on $U$, a set $C$ is called a \emph{connected component} of $R$, 
if $C$ is an equivalence class of the smallest equivalence relation containing $R$.
Denote by $\mathit{Co}(R)$ the set of all $R$-connected components. We also denote $\mathit{Co}(R)$ simply by $\mathit{Co}$ in the case there is
no danger of confusion. Let $\lesssim$ be a quasiorder. For any connected component $C \in \mathit{Co}$ and $x \in U$,
$[x) \cap C \neq \emptyset $ implies $x \in C$, and $x \in C$ implies $[x) \subseteq C$.
Hence, $C^\UP \subseteq C \subseteq C^\DOWN$. This means that   $C^\DOWN = C = C^\UP$ for any
connected component $C$.
 
We denote for each $C \in \mathit{Co}$ by $\mathit{RS}(C)$ the set of rough sets on the component $C$ determined by the restriction of $\lesssim$ to $C$. 
A binary relation $R$ on $U$ is \emph{left-total} if for any $x \in U$, there exists $y \in U$ such that $x \, R \, y$.
Note that every reflexive  relation is left-total. In the literature left-total relations are also called \emph{total} or \emph{serial} relations.

In \cite{JarRadVer09}, the following decomposition theorem was proved even in the general setting of a left-total relation.

\begin{theorem} \label{thm:decomposition}
For any quasiorder, 
\[  \zseq{\mathit{RS};\leq} \cong \zseq{\prod_{C \in \mathit{Co}} \mathit{RS}(C); \leq }.  \]
\end{theorem}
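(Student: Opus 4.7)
The plan is to construct an order isomorphism $\Phi \colon \mathit{RS} \to \prod_{C \in \mathit{Co}} \mathit{RS}(C)$ by coordinatewise restriction: for $(A,B) \in \mathit{RS}$ set
\[
\Phi(A,B) := (A \cap C,\ B \cap C)_{C \in \mathit{Co}},
\]
with the inverse obtained by taking the componentwise union. Once this is shown to be a well-defined bijection that is monotone in both directions, it is automatically a complete lattice isomorphism (the order on both sides is coordinatewise).

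The key preliminary step is to establish that for every connected component $C \in \mathit{Co}$ and every $x \in C$, both $[x) \subseteq C$ and $(x] \subseteq C$. This follows directly from the definition of $\mathit{Co}$ as the set of equivalence classes of the smallest equivalence relation containing $\lesssim$: any $\lesssim$- or $\gtrsim$-neighbour of $x$ belongs to the same class. Two consequences are immediate: first, the singletons of the restricted quasiorder $\lesssim\!|_C$ on $C$ are exactly $\mathcal{S} \cap C$; second, for any $X \subseteq U$ and $x \in C$ one has $x \in X^\UP$ iff $[x) \cap (X \cap C) \neq \emptyset$ and $x \in X^\DOWN$ iff $[x) \subseteq X \cap C$. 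Hence $X^\UP \cap C$ and $X^\DOWN \cap C$ coincide with the upper and lower approximations of $X \cap C$ computed inside $(C, \lesssim\!|_C)$, showing that $\Phi(A,B)$ indeed lands in $\prod_{C \in \mathit{Co}} \mathit{RS}(C)$.

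Bijectivity then splits as follows. Injectivity is immediate because $\mathit{Co}$ partitions $U$: if $A \cap C = A' \cap C$ for all $C$, then $A = A'$, and similarly for $B$. For surjectivity, given a family $((A_C, B_C))_{C \in \mathit{Co}}$ with each $(A_C, B_C) \in \mathit{RS}(C)$, pick for each $C$ some $X_C \subseteq C$ whose lower and upper approximations in $(C, \lesssim\!|_C)$ are $A_C$ and $B_C$, and set $X := \bigcup_{C} X_C \subseteq U$. The restriction identities established above yield $X^\DOWN = \bigcup_C A_C$ and $X^\UP = \bigcup_C B_C$, so $(X^\DOWN, X^\UP) \in \mathit{RS}$ is a preimage of the prescribed family under $\Phi$. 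Monotonicity of $\Phi$ and of $\Phi^{-1}$ is built into the coordinatewise definition of $\leq$ on both sides.

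Alternatively, one can bypass the choice of representatives by invoking the intrinsic characterisation in Theorem~\ref{thm:quasiorder_characterization}: the conditions $A \in \mathcal{T}_\lesssim$, $B \in \mathcal{T}_\gtrsim$, $A \subseteq B$, and $\mathcal{S} \subseteq A \cup B^c$ all decompose conjunctively across $\mathit{Co}$, precisely because each component is both $\lesssim$- and $\gtrsim$-closed and $\mathcal{S}$ restricts componentwise. The only delicate ingredient in either route is the closure of components under $[x)$ and $(x]$; once that is unpacked, everything else is routine bookkeeping.
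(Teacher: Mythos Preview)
Your proof is correct and follows the natural approach. The paper itself does not include a proof of this theorem, referring instead to \cite{JarRadVer09} and noting that the result holds even in the more general setting of left-total relations; your argument via componentwise restriction and the closure of each connected component under $[x)$ and $(x]$ is precisely the expected route, and indeed extends verbatim to that generality.
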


Note that from Theorem~\ref{thm:decomposition} we obtain the above-mentioned result by Gehrke and Walker stating that for an equivalence $E$, 
$\zseq{\mathit{RS};\leq}$ is order-isomorphic to a pointwise-ordered direct product of chains of two and three elements. This is because if $x$ is a singleton,
then its connected component is $C = \{x\}$ and $\mathit{RS}(C)$ forms a chain of two elements $(\emptyset,\emptyset)$ and $(C,C)$.
In the case $x$ is not singleton, then the connected component $C$ containing $x$ has at least two elements and $\mathit{RS}(C)$ is a chain of the three elements
$(\emptyset,\emptyset)$, $(\emptyset,C)$ and $(C,C)$.

As in the case of equivalences, we say that $X \subseteq U$ is \emph{definable} with respect to $\lesssim$ if $X^\DOWN = X^\UP$.
Clearly, $\emptyset$ and $U$ are definable. The following result is not appearing in the literature. Therefore, we are presenting
a proof for it.

\begin{proposition} \label{prop:definable}
For a quasiorder $\lesssim$, the definable sets are the unions of the connected components of $\lesssim$ and the empty set $\emptyset$.
\end{proposition}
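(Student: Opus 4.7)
The plan is to argue via a chain of equivalent reformulations, ultimately reducing definability to closure under the equivalence generated by $\lesssim$. I would proceed in three short steps.

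First, I would unpack the definition. Since $\lesssim$ is reflexive, we always have $X^\DOWN \subseteq X \subseteq X^\UP$: the first inclusion holds because $x \in X^\DOWN$ means $[x) \subseteq X$ and in particular $x \in X$, while $X \subseteq X^\UP$ holds because $x \in X$ implies $x \in [x) \cap X$. Consequently the condition $X^\DOWN = X^\UP$ collapses to the double equality $X^\DOWN = X = X^\UP$. By the identifications recalled just before the proposition, this is precisely the statement that $X$ is both open and closed in $\mathcal{T}_\lesssim$, i.e. $X$ is simultaneously an up-set and a down-set of $(U,\lesssim)$.

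Second, I would translate the up-set/down-set condition into closure under the smallest equivalence $\approx$ containing $\lesssim$. Being both an up-set and a down-set of $\lesssim$ means: whenever $x \in X$ and $x \lesssim y$ or $y \lesssim x$, then $y \in X$. Hence $X$ is closed under the symmetric relation ${\lesssim} \cup {\gtrsim}$, and by a straightforward induction on the length of zig-zag chains $x = z_0 \, R \, z_1 \, R \, \cdots \, R \, z_n = y$ with $R \in \{ {\lesssim}, {\gtrsim} \}$, it is closed under the transitive closure of ${\lesssim} \cup {\gtrsim}$, which is exactly $\approx$. Thus $X$ is a union of $\approx$-classes, i.e.\ of connected components.

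Third, for the converse, I would invoke the observation (already made in the text preceding the proposition) that each connected component $C$ satisfies $C^\DOWN = C = C^\UP$. Since both $^\DOWN$ and $^\UP$ commute with arbitrary unions of up-sets, respectively down-sets -- or, more elementarily, since the property of being simultaneously an up-set and a down-set is stable under arbitrary unions -- any union of connected components is again both an up-set and a down-set of $\lesssim$, hence definable by the first step. The empty set corresponds to the empty union of components and is trivially definable. I expect no serious obstacle here; the only point requiring a moment of care is the inductive argument in the second step, making sure that closure under $\lesssim$ and $\gtrsim$ really propagates to the transitive closure.
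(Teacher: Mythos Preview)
Your proof is correct and follows essentially the same route as the paper: both arguments reduce definability to the equivalence $X^\DOWN = X = X^\UP$, i.e.\ $X$ being simultaneously an up-set and a down-set, and then use the zig-zag description of the smallest equivalence containing $\lesssim$ to identify such sets with unions of connected components. The only cosmetic difference is that the paper handles the converse direction by the direct computation $X^\UP = \bigcup C^\UP = \bigcup C^\DOWN \subseteq X^\DOWN$, whereas you appeal to stability of the up-set/down-set property under unions; both are equally valid.
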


\begin{proof}
Let $U$ be a set and let $X$ be a union of connected components of a quasiorder $\lesssim$ on $U$.
This means that there exists a subfamily $\mathcal{H} \subseteq \mathit{Co}$
such that $X = \bigcup \mathcal{H}$. Because $C^\DOWN = C^\UP$ for all $C \in \mathcal{H}$, we have
$X^\UP = \big ( \bigcup \mathcal{H} )^\UP = \bigcup \{ C^\UP \mid C \in \mathcal{H} \} =
\bigcup \{ C^\DOWN \mid C \in \mathcal{H} \} \subseteq (\bigcup \mathcal{H} \big) ^\DOWN = X^\DOWN \subseteq X^\UP$.
Thus, $X^\UP = X^\DOWN$.

Conversely, if $X \neq \emptyset$ is definable, then $X^\DOWN = X = X^\UP$ and $X$ belongs both to $\mathcal{T}_\lesssim$ and $\mathcal{T}_\gtrsim$.
Therefore, $X$ is closed with respect to $\lesssim $ and $\gtrsim $.
This means that for any $x\in X$ and $y,z\in U$, $x \lesssim y$ implies $y \in X$ and $x\gtrsim z$ implies $z\in X$.
Since the smallest equivalence $E$ containing $\lesssim $ and $\gtrsim $ is their lattice-theoretical join, we have $(u,v)\in E$ for some
$u,v\in U$ if and only if there exist $z_{0}, z_{1},...,z_{n} \in U$ such that $u=z_{0}$, $v=z_{n}$, and $z_{i-1}\lesssim z_{i}$ or $z_{i-1}\gtrsim z_{i}$
for all $1 \leq i \leq n$. Therefore, for any $x \in X$, $(x,w)\in E$ implies $w \in X$.
Hence $X$ is closed with respect to the equivalence $E$ and this implies that $X$ equals to a union of some classes of $E$.
However, the classes of $E$ are just the connected components of $\lesssim $. Thus, $X$ is a union of some connected components of $\lesssim $.
\end{proof}

We end this subsection by noting that there are also other choices for lower-upper approximation pairs defined in terms of a quasiorder $\lesssim$ on $U$.
In \cite{KumBan2012,KumBan2015}
Kumar and Banerjee define the operators $\mathsf{L}$ and $\mathsf{U}$ by 
\[ 
\mathsf{L}(X) = \bigcup \{ D \in \mathcal{T}_\lesssim \mid D \subseteq X \} \quad \mbox{and} \quad
\mathsf{U}(X) = \bigcap \{ D \in \mathcal{T}_\lesssim \mid X \subseteq D \} \]
for any $X \subseteq U$. The sets $\mathsf{L}(X)$ and $\mathsf{U}(X)$ belong to the same topology $\mathcal{T}_\lesssim$, whose
elements Kumar and Banerjee called ``definable''. These operators can be also be written in form
\[ \mathsf{L}(X) = \{ x \in U \mid [x) \subseteq X\} \quad \mbox{and} \quad \mathsf{U}(X) = \{ x \in U \mid (x] \cap X \neq \emptyset \}. \]
This approach differs significantly from the one considered here, because now the rough set system
\[ \{ ( \mathsf{L}(X), \mathsf{U}(X) ) \mid X \subseteq U \} \]
is not generally a lattice with respect to the coordinatewise order, as noted in \cite{JarRad21, KumBan2015}.
In \cite{KumBan2012} it is shown that 
$\{ ( \mathsf{L}(X), \mathsf{U}(X) ) \mid X \subseteq U \}$ 
is isomorphic to the complete lattice
$\{(D_1, D_2) \in  \mathcal{T}_\leq \times  \mathcal{T}_\leq \mid D_1 \subseteq D_2 \}$ if and only if all the pairs $(D_1, D_2)$ satisfy
$|D_2 \setminus D_1| \neq 1$.

\subsection{Nelson algebras of rough sets}
\label{ss:NelsonRS}

In this section, we consider Nelson algebras of quasiorder-based rough sets. We also recall the
representation theorem for Nelson algebras defined on algebraic lattices in terms of rough sets.

According to R.~Cignoli \cite{Cign86}, a \emph{quasi-Nelson algebra} is a Kleene algebra \[\alg{A} = \zseq{A; \land, \lor, {\nnot}, 0, 1 }\]
such that for  all $a,b \in A$, the \emph{weak relative pseudocomplement} of $a$ with respect to $b$
\begin{equation} \label{eq:NelsonImplication}
a \to b :=  a \Rightarrow ({\sim}a \vee b)
\end{equation}
exists. This means that every Kleene algebra whose underlying lattice is a Heyting algebra, and, in particular, any Kleene algebra defined on an
algebraic lattice, forms a quasi-Nelson algebra.

We say that a De~Morgan algebra $\alg{A}$ is \emph{completely distributive}, if its underlying lattice $A$ is completely distributive. In such
a case, we define for any $j \in \mathcal{J}$ the element
\begin{equation} \label{eq:brother}
  g(j) = \bigwedge \{  a \in A \mid a \nleq {\sim} j \} .
\end{equation}  
The properties of $g(j)$ were presented in \cite{Mont63b} for finite De~Morgan algebras. We recall them here in the case
$\alg{A}$ is a completely distributive De~Morgan algebra. First we note that
\begin{equation} \label{eq:gee_1}
(\forall j \in \mathcal{J}) \,  g(j) \nleq {\sim} j.
\end{equation}  
Indeed, if $g(j) \leq {\sim} j$, then
\[ {\sim} j = {\sim} j \vee \bigwedge \{ a \in A \mid a \nleq {\sim} j \} = 
\bigwedge \{ a \vee {\sim} j \mid a \in A \text{ and } a \nleq {\sim} j \}. \]
Since $j \in \mathcal{J}$, the element ${\sim} j$ is completely meet-irreducible.
We have that ${\sim} j =  a \vee {\sim} j$ and $a \leq {\sim}j$ for some $a \in A$ such that $a \nleq {\sim} j$, a contradiction.

For $a \in A$ and $j \in \mathcal{J}$,
\begin{equation} \label{eq:gee_2}
  j \nleq {\sim} a \iff g(j) \leq a . 
\end{equation}
To verify \eqref{eq:gee_2}, assume $j \nleq {\sim} a$. Because this is equivalent to $a \nleq {\sim}j$, we directly get $g(j) \leq a$.
Conversely, assume $g(j) \leq a$. If $j \leq {\sim} a$, then $j \leq {\sim} a \leq {\sim} g(j)$ and $g(j) \leq {\sim} j$ contradicting $g(j) \nleq {\sim} j$.
Thus, $j \nleq {\sim} a$.

Note also that by \eqref{eq:brother} and \eqref{eq:gee_1}, $g(j)$ is the least element of $A$ which is not below ${\sim}j$. 
Using this fact we can prove that $g(j) \in \mathcal{J}$.
Namely, if $g(j) = \bigvee S$ for some $S \subseteq A$, then $b \leq g(j)$ for all $b \in S$. Assume that $b < g(j)$ for all $b \in S$. Then
$b \notin \{ a \in A \mid a \nleq {\sim} j \}$ and $b \leq {\sim} j$ for all $b \in S$. Therefore, $g(j) = \bigvee S \leq {\sim} j$, a contradiction to
\eqref{eq:gee_1}.
We obtain that $b = g(j)$ for some $b \in S$ and $g(j) \in S$. This means that $g(j) \in \mathcal{J}$.

Thus, $g$ is a mapping $\mathcal{J} \to \mathcal{J}$. The idea is that for a completely distributive De~Morgan algebra $A$,
the map $g$ on $\mathcal{J}$ behaves similarly as the map $g$ of Section~\ref{ss:pri} on the set $X(A)$ of prime filters of $A$. But dealing with
completely join-irreducible elements is easier than dealing with prime filters. 
By overloading the notation $g$, we can write that for every $j \in \mathcal{J}$,
\begin{align*}
  g([j)) &= \{ a \in A \mid {\sim}a \in [j) \}^c = \{ a \mid j \leq {\sim} a\}^c 
   = \{ {\sim} a \mid j \leq a \}^c \\
   &= \{ a \mid a \leq {\sim} j \}^c 
        = \{ a \mid a \nleq {\sim} j \} = \big [g(j) \big ).
\end{align*}
It is well-known that in any distributive lattice, $[j)$ is a prime filter for each $j \in \mathcal{J}$.

We have that
\begin{equation} \label{eq:J1}
  (\forall j \in \mathcal{J}) \, g(g(j)) = j.
\end{equation}
This is because $g(g(j)) = \bigwedge \{ a \in A \mid a \nleq {\sim} g(j) \}$, $g(j) \nleq {\sim}j$ gives $j \nleq {\sim} g(j)$ and $g(g(j)) \leq j$.
On the other hand, $g(j) \in \mathcal{J}$ gives $g(g(j)) \nleq {\sim}g(j)$ by \eqref{eq:gee_1}. This implies $g(j) \nleq g(g(j))$.
By \eqref{eq:gee_2}, this is equivalent to $j \leq g(g(j))$. Thus, $j = g(g(j))$.

Let $j \leq k$ for some $j,k \in \mathcal{J}$. Then ${\sim}k \leq {\sim}j$. This means that for any $a \in A$, $a \leq {\sim} k$ implies
$a \leq {\sim} j$, or equivalently, $a \nleq {\sim} j$ implies $a \nleq {\sim} k$. Therefore,
$\{ a \in A \mid a \nleq {\sim} j \} \subseteq  \{ a \in A \mid a \nleq {\sim} k \}$ and 
$g(j) = \bigwedge \{ a \in A \mid a \nleq {\sim} j \} \geq \bigwedge  \{ a \in A \mid a \nleq {\sim} k \} = g(k)$.
We can write:
\begin{equation} \label{eq:J2}
  (\forall j,k \in \mathcal{J}) \, j \leq k \implies g(j) \geq g(k).
\end{equation}
Note that \eqref{eq:J1} and \eqref{eq:J2} mean that $\mathcal{J}$ is self-dual, that is, $(\mathcal{J},\leq) \cong (\mathcal{J},\geq)$.

Let $\alg{A}$ be a completely distributive Kleene algebra. Assume that $j \nleq g(j)$ and $g(j) \nleq j$ for some $j \in \mathcal{J}$.
By \eqref{eq:gee_2}, we get that $j \leq {\sim} j$ and $g(j) \leq {\sim} g(j)$. Because $\alg{A}$ is a Kleene algebra, we have
\[ g(j) = g(j) \wedge {\sim}g(j) \leq j \vee {\sim} j = {\sim} j \]
contradicting \eqref{eq:gee_1}. Thus, we have shown that 
\begin{equation} \label{eq:J3}
  (\forall j \in \mathcal{J}) \, j \leq g(j) \ \text{or} \ g(j) \leq j.
\end{equation} 

Define three sets:
\begin{align*}
  \mathcal{J}^- &: = \{j \in \mathcal{J} \mid j < g(j)\}, \\
  \mathcal{J}^\circ & := \{j \in \mathcal{J} \mid j = g(j)\}, \\
  \mathcal{J}^+ & : = \{j \in \mathcal{J} \mid j > g(j)\}.
\end{align*}
Because $j$ and $g(j)$ are always comparable, we have
\[ \mathcal{J} = \mathcal{J}^- \cup \mathcal{J}^\circ \cup \mathcal{J}^+.\]
Clearly, $j \in \mathcal{J}^- \iff g(j) \in \mathcal{J}^+$.
We also have that for all $j \in \mathcal{J}$,
\[ [j) \in X(A)^+ \iff j \in \mathcal{J}^+ \cup \mathcal{J}^\circ \text{ \ and \ }
  [j) \in X(A)^- \iff j \in \mathcal{J}^- \cup \mathcal{J}^\circ,\]
where $X(A)^+$ and $X(A)^-$ are defined as in Section~\ref{ss:pri}, that is,
$X(A)^+ = \{P \in X(A) \mid P \subseteq g(P)\}$ and $X(A)^- = \{P \in X(A) \mid g(P) \subseteq P \}$,

A Kleene algebra is said to have the \emph{interpolation property} \cite{Cign86} if for any prime filters $P$ and $Q$ such that $P,Q \subseteq g(P), g(Q)$, there is
prime filter $F$ such that
\[ P,Q \subseteq F \subseteq g(P), g(Q).\]

\begin{theorem}\cite[Theorem~3.5]{Cign86} \label{thm:quasi_nelson_filter}
A quasi-Nelson algebra is a Nelson algebra if and only if it has the interpolation property.
\end{theorem}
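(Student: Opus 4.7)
The plan is to work via the Priestley-style duality for Kleene algebras (Subsection~\ref{ss:pri}) together with Odintsov's characterization of Nelson algebras as the bounded $\PN$-lattices whose N4-space satisfies $X = X^+ \cup X^-$ (Theorem~\ref{th:caeqnfnfs}). Under this dictionary, being a Nelson algebra corresponds to the structural condition that the subspace $X^+(\A) = \{P \in X(\A) \mid P \subseteq g(P)\}$ forms an Esakia space (cf.\ item~(4) of Proposition~\ref{Prop:OdDua}).

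For the forward direction, assume $\A$ is a Nelson algebra. Given prime filters $P, Q$ with $P, Q \subseteq g(P)$ and $P, Q \subseteq g(Q)$, we have $P, Q \in X^+(\A)$. Because $X^+(\A)$ is an Esakia space, the prime filter theorem applied within this subspace yields a prime filter $F \in X^+(\A)$ extending the filter generated by $P \cup Q$. Since $g$ is an order-reversing involution, $P \subseteq F$ gives $g(F) \subseteq g(P)$, and $F \in X^+(\A)$ means $F \subseteq g(F)$, so $F \subseteq g(P)$; symmetrically $F \subseteq g(Q)$. Thus $F$ is the required interpolant.

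For the reverse direction, assume the interpolation property. The strategy is to verify one of the equivalent conditions of Proposition~\ref{Prop:NelWeakening}---most naturally the Nelson identity itself---by contradiction. A putative failure yields elements $a, b \in A$ that, via the subdirect embedding of $\A$ into quotients by prime filters, are separable by a configuration of prime filters whose shape directly witnesses the broken identity. The key observation is that this configuration forces the existence of two prime filters $P, Q \in X^+(\A)$ with $P, Q \subseteq g(P) \cap g(Q)$ but admitting no prime filter $F$ lying simultaneously above $P, Q$ and below $g(P), g(Q)$, contradicting the interpolation hypothesis. Equivalently, one can show that interpolation is exactly the condition that makes $X^+(\A)$ an Esakia space, whence Theorem~\ref{Prop:OdRep} yields a twist-structure representation of $\A$ over a Heyting algebra, so $\A$ is a Nelson algebra.

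The hard part is the reverse direction: translating the set-theoretic interpolation on prime filters into the concrete verification of an algebraic identity. The delicate technical step is to identify, from a putative counterexample to the Nelson identity, the precise pair of prime filters $P, Q$ whose interpolant would resolve the violation. This requires careful bookkeeping of how the involution $\nnot$ and the weak implication $\to$ interact on the dual side, together with an application of Priestley separation within $X^+(\A)$ to extract the required prime filters. Once this correspondence between equational witnesses and prime-filter configurations is pinned down, the theorem falls out of the duality.
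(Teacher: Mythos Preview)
The paper does not prove this theorem; it is quoted from \cite[Theorem~3.5]{Cign86} without argument, so there is no in-paper proof to compare against. That said, your proposal has genuine gaps that would prevent it from going through.

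The main problem is the reverse direction. A quasi-Nelson algebra is by definition only a Kleene algebra in which every weak relative pseudocomplement $a \Rightarrow ({\nnot} a \vee b)$ exists; it is \emph{not} assumed to be an $\PN$-lattice. In particular, axiom~\ref{Itm:N4} of Definition~\ref{Def:N4} and the currying law~\ref{Itm:N11eq} need not hold for the operation $\to$ so defined---indeed, establishing~\ref{Itm:N11eq} is precisely what the interpolation property buys. Consequently you cannot invoke Theorem~\ref{th:caeqnfnfs}, Proposition~\ref{Prop:OdDua}, or the twist representation of Theorem~\ref{Prop:OdRep} for a generic quasi-Nelson algebra: those results presuppose the $\PN$-lattice structure you are trying to derive, so the argument is circular. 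Cignoli's actual proof works directly with the Cornish--Fowler duality for Kleene algebras (where only $\land,\lor,\nnot$ are dualised) and shows that interpolation on prime filters is exactly what is needed for the candidate implication $a \to b := a \Rightarrow ({\nnot} a \vee b)$ to satisfy the remaining Nelson axiom~\ref{Itm:N11eq}.

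Your forward direction also has a gap. You assert that ``the prime filter theorem applied within $X^+(\A)$ yields a prime filter $F \in X^+(\A)$ extending the filter generated by $P \cup Q$,'' but the prime filter theorem produces prime filters of $\A$, not prime filters guaranteed to lie in the subset $X^+(\A)$. That $X^+(\A)$ carries an induced Esakia topology (and item~(4) of Proposition~\ref{Prop:OdDua} concerns $X^1$, not $X^+$) does not by itself furnish such an $F$. Your subsequent deduction---that $F \in X^+$ together with $P,Q \subseteq F$ forces $F \subseteq g(P) \cap g(Q)$---is correct, but it rests on the unjustified existence of $F$. The cleaner route for this direction is either to use the twist representation directly (reducing to an elementary check in the Heyting factor) or to verify interpolation via the explicit formula for $\to$ and the axiom~\ref{Itm:N11eq}.
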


Let $\alg{A}$ be a completely distributive Kleene algebra. We say that $\mathcal{J}$ satisfies the \emph{interpolation property}
if for any $p,q \in \mathcal{J}$ such that $p,q \leq g(p),g(q)$, there is $f \in \mathcal{J}$ such that
\[p,q \leq f \leq g(p),g(q)  .\]

In \cite[Proposition~3.5]{Jarv11a} is presented the following theorem for Kleene algebras defined on algebraic lattices.
This result can be viewed as a counterpart of Theorem~\ref{thm:quasi_nelson_filter}.

\begin{theorem} \label{thm:quasi_nelson_irreducibles}
If $\alg{A} = \zseq{A; \land, \lor, {\nnot}, 0, 1}$ is a Kleene algebra defined on an algebraic lattice, then 
$\zseq{A; \land, \lor,  \to, {\nnot}, 0, 1}$ is a Nelson algebra,
where the operation $\to$ is defined by \eqref{eq:NelsonImplication}, if and only if $\mathcal{J}$ satisfies the
interpolations property.
\end{theorem}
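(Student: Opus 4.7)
The strategy is to reduce, via Cignoli's Theorem~\ref{thm:quasi_nelson_filter}, to showing that the interpolation property on the set $X(A)$ of prime filters is equivalent to the interpolation property on $\mathcal{J}$. Cignoli's theorem applies in our setting because any Kleene algebra whose underlying lattice is algebraic is automatically a quasi-Nelson algebra: the underlying lattice is complete and distributive, hence a complete Heyting algebra, so the weak implication~\eqref{eq:NelsonImplication} is well defined.

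The bridge between the two interpolation properties is furnished by the map $j \mapsto [j)$ embedding $\mathcal{J}$ into $X(A)$, together with the compatibility $g([j)) = [g(j))$ noted earlier in this subsection. This map is order-reversing (since $[a) \subseteq [b)$ iff $b \leq a$) and intertwines the $g$-operator on completely join-irreducibles with that on prime filters. Consequently, the interpolation property on $\mathcal{J}$ is precisely the restriction to principal prime filters of the $X^-$-dual of Cignoli's interpolation; since the involution $P \mapsto g(P)$ on $X(A)$ is order-reversing and self-inverse, this $X^-$-dual is equivalent to Cignoli's $X^+$-formulation, so it suffices to work with principal prime filters generated by elements of $\mathcal{J}$.

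For the direction ``Nelson $\Rightarrow$ $\mathcal{J}$-interpolation'', I would start with $p, q \in \mathcal{J}$ satisfying $p, q \leq g(p), g(q)$, apply the (dualised) Cignoli interpolation to the principal filters $[p)$ and $[q)$, and use the spatiality of the algebraic lattice to extract a completely join-irreducible interpolant $f \in \mathcal{J}$ with $p \vee q \leq f \leq g(p) \wedge g(q)$. The converse direction --- lifting $\mathcal{J}$-interpolation back to all of $X(A)$ --- is the main obstacle, because general prime filters need not be principal. Here the plan is to exploit the algebraicity of $\alg{A}$ to decompose each prime filter according to the completely join-irreducibles it contains, apply $\mathcal{J}$-interpolation pointwise to pairs of such irreducibles, and assemble the interpolants into a prime filter $F$ with $P, Q \subseteq F \subseteq g(P), g(Q)$. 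Verifying that this assembly produces a prime filter with all four required containments, rather than merely a filter, is the subtle step, and relies on the compactness properties of algebraic lattices together with the finite-character nature of primality.
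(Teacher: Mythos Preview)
Your reduction to Cignoli's Theorem~\ref{thm:quasi_nelson_filter} via $j \mapsto [j)$ and the identity $g([j)) = [g(j))$ is the natural strategy, but the transfer between prime-filter interpolation and $\mathcal{J}$-interpolation does not go through as sketched, because the embedding $\mathcal{J} \hookrightarrow X(\alg{A})$ is far from surjective even for Kleene algebras on algebraic lattices: in $\mathbf{3}^{\mathbb{N}}$, any non-principal ultrafilter on the index set yields a prime filter of $\mathbf{3}^{\mathbb{N}}$ containing no completely join-irreducible element at all. In the forward direction, Cignoli gives you a prime filter $F$ with $g(p)\wedge g(q)\in F\subseteq [p\vee q)$, but you still need an $f\in\mathcal{J}$ inside the interval $[p\vee q,\,g(p)\wedge g(q)]$. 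Spatiality and compactness of $p\vee q$ produce a finite set $\{j_1,\ldots,j_n\}\subseteq\mathcal{J}$ below $g(p)\wedge g(q)$ whose join dominates $p\vee q$; join-primality of $p$ and $q$ then gives $p\leq j_i$ and $q\leq j_k$ for some indices---but possibly $i\neq k$, and since $j_1\vee\cdots\vee j_n$ need not lie in $F$ (filters are upward-closed), primality of $F$ cannot be invoked to select a single summand. For the converse, a prime filter $P$ is not determined by $P\cap\mathcal{J}$ (which may be empty, as in the example above), so your decompose-and-reassemble plan discards exactly the information needed to produce a \emph{prime} interpolant.

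A route that avoids these obstacles bypasses $X(\alg{A})$ entirely. Since $\alg{A}$ is algebraic and completely distributive, every $j\in\mathcal{J}$ is completely join-prime, and the map $a\mapsto\{j\in\mathcal{J}: j\leq a\}$ is an isomorphism of Kleene algebras from $\alg{A}$ onto the algebra of order-ideals of $(\mathcal{J},\leq)$ with negation ${\sim}D=\{j:g(j)\notin D\}$; in other words, $\alg{A}$ is (after the obvious order-reversal) the algebra $\alg{L}(X)$ attached to the structure $X=(\mathcal{J},\leq,g)$, which already satisfies (J1)--(J3) by \eqref{eq:J1}--\eqref{eq:J3}. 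The theorem then reduces to: $\alg{L}(X)$ is Nelson iff $X$ satisfies (J4). One direction is Vakarelov's Monteiro-space construction; for the other, a failure of (J4) at specific $p,q\in\mathcal{J}$ can be turned directly into a failure of axiom~\ref{Itm:N11eq}, with no appeal to arbitrary prime filters.
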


\begin{lemma}\cite{Jarv11a}
Let $\lesssim$ be a quasiorder on $U$. We have that in $\zseq{\mathit{RS};\leq}$:
\begin{align*}
  \mathcal{J}^- &= \{ (\emptyset, \{x\}^\UP) \mid | R(x) | \geq 2 \}, \\
  \mathcal{J}^\circ &= \{ (\{x\}, \{x\}^\UP) \mid R(x) = \{x\}  \}, \\
  \mathcal{J}^+ &= \{ (R(x), R(x)^\UP) \mid | R(x) | \geq 2 \}.
\end{align*}
\end{lemma}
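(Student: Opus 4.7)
The plan is to enumerate the completely join-irreducibles of $\mathit{RS}$ listed in Proposition~\ref{Prop:join_irreducibles} and, for each $j \in \mathcal{J}$, to compute its De~Morgan ``brother'' $g(j) = \bigwedge \{a \in \mathit{RS} : a \nleq {\sim} j \}$ explicitly, then compare $j$ with $g(j)$ componentwise to decide which of $\mathcal{J}^-$, $\mathcal{J}^\circ$, $\mathcal{J}^+$ it belongs to. Two preliminary facts will be used throughout. First, by combining ${\sim}(X^\DOWN, X^\UP) = (X^{c\DOWN}, X^{c\UP})$ with the identities $X^{\DOWN c} = X^{c\UP}$ and $X^{\UP c} = X^{c\DOWN}$ recalled at the beginning of Subsection~\ref{ss:CLRSQ}, one gets the convenient formula ${\sim}(A, B) = (B^c, A^c)$ on $\mathit{RS}$. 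Second, by Theorem~\ref{thm:quasiorder_characterization}, any $(A, B) \in \mathit{RS}$ satisfies $A \in \mathcal{T}_\lesssim$, $B \in \mathcal{T}_\gtrsim$, $A \subseteq B$, and $A \cap \mathcal{S} = B \cap \mathcal{S}$.

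Fix first $j = (\emptyset, \{x\}^\UP) = (\emptyset, (x])$ with $|[x)| \geq 2$. Then ${\sim}j = ((x]^c, U)$, and $(A, B) \nleq {\sim} j$ reduces to $A \cap (x] \neq \emptyset$. An $\lesssim$-up-set $A$ meeting $(x]$ contains some $y \lesssim x$, whence $A \supseteq [y) \supseteq [x)$; and $B$, being an $\lesssim$-down-set with $B \supseteq A \supseteq [x)$, must contain the $\lesssim$-down-closure of $[x)$, which coincides with $[x)^\UP$. The candidate $([x), [x)^\UP)$ itself lies in $\mathit{RS}$: the singleton-compatibility condition holds since a singleton $y \in \mathcal{S}$ belongs to $[x)^\UP$ iff $y \in [x)$. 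Hence $g(j) = ([x), [x)^\UP)$, and since $|[x)| \geq 2$ gives $[x) \supsetneq \emptyset$, we conclude $j < g(j)$, so $j \in \mathcal{J}^-$.

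Now fix $j = ([x), [x)^\UP)$. A brief up-set/down-set computation using $A \subseteq B$ shows that $(A, B) \nleq {\sim} j = ([x)^{\UP c}, [x)^c)$ is equivalent to $B \cap [x) \neq \emptyset$, which (since $B$ is an $\lesssim$-down-set) is equivalent in turn to $B \supseteq (x]$. The minimal $B$ with this property is $B = (x]$. Two subcases arise according to the singleton constraint. If $[x) = \{x\}$, then $x \in \mathcal{S}$ and $(x] \cap \mathcal{S} = \{x\}$, forcing the minimal $A$ to contain $x$ and hence to equal $\{x\}$; we get $g(j) = (\{x\}, (x]) = j$, so $j \in \mathcal{J}^\circ$. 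If instead $|[x)| \geq 2$, then $x \notin \mathcal{S}$ and a direct check gives $(x] \cap \mathcal{S} = \emptyset$ (a singleton $\lesssim$-below $x$ must equal $x$), so the minimal $A$ is $\emptyset$; we obtain $g(j) = (\emptyset, (x])$, and $[x) \supsetneq \emptyset$ yields $j > g(j)$, so $j \in \mathcal{J}^+$.

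The three cases together describe precisely the three sets asserted by the lemma, and they exhaust $\mathcal{J}$ because the two families in Proposition~\ref{Prop:join_irreducibles} are disjoint (no element of the second family has empty first coordinate). The main technical point demanding care is the verification that the proposed minima $([x),[x)^\UP)$, $(\{x\},(x])$, and $(\emptyset,(x])$ do belong to $\mathit{RS}$, and in particular that the singleton-compatibility constraint is either automatic or forces exactly the expected minimum; this reduces in each case to the two small observations $(x] \cap \mathcal{S} \subseteq \{x\}$ for every $x \in U$ and $[x)^\UP \cap \mathcal{S} = [x) \cap \mathcal{S}$, used above.
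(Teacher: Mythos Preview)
Your proof is correct and follows precisely the approach indicated by the paper: the survey does not reproduce a proof (it cites \cite{Jarv11a}), but the sentence immediately following the lemma, ``For any $(\emptyset, \{x\}^\UP) \in \mathcal{J}^-$, $g\big( (\emptyset, \{x\}^\UP) \big) = (R(x), R(x)^\UP)$,'' confirms that the intended argument is exactly the explicit computation of $g(j)$ for each $j$ listed in Proposition~\ref{Prop:join_irreducibles} that you carry out. Your handling of the singleton-compatibility condition via the two observations $(x]\cap\mathcal{S}\subseteq\{x\}$ and $[x)^\UP\cap\mathcal{S}=[x)\cap\mathcal{S}$ is the right technical point, and the final partition argument closes the equalities cleanly.
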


For any $(\emptyset, \{x\}^\UP) \in \mathcal{J}^-$, $g\big( (\emptyset, \{x\}^\UP) \big ) ) =  (R(x), R(x)^\UP)$.
It is proved in \cite{Jarv11a} that the set $\mathcal{J}(\mathit{RS})$ of completely join-irreducible elements of $\mathit{RS}$
has the interpolation property. This means that

\begin{theorem} \label{thm:quasiorder_nelson}
Let $\lesssim$ be a quasiorder on $U$. Then,
\[ \alg{RS} = \zseq{\mathit{RS}; \wedge, \vee, \to,{\sim},(\emptyset,\emptyset),(U,U)} \]
is a Nelson algebra.
\end{theorem}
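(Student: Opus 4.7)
The plan is to apply Theorem~\ref{thm:quasi_nelson_irreducibles}, which reduces the task to two observations: first, that $\alg{RS}$ is a Kleene algebra defined on an algebraic lattice, and second, that the set $\mathcal{J}=\mathcal{J}(\mathit{RS})$ of completely join-irreducibles satisfies the interpolation property. For the first point, recall that by Theorem~\ref{Thm:quasi_complete_lattice}, $\mathit{RS}$ is a complete sublattice of $\wp(U)\times\wp(U)$, which is algebraic, so $\mathit{RS}$ is algebraic; the Kleene structure is the one already described, with $\sim(A,B)=(B^c,A^c)$, whose underlying De~Morgan lattice becomes Kleene thanks to the definability of rough sets. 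Hence the weak relative pseudocomplement \eqref{eq:NelsonImplication} is defined and $\alg{RS}$ is at least a quasi-Nelson algebra.

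The substantive task is to verify the interpolation property for $\mathcal{J}$. I would work directly from the description just recalled, namely
\[
\mathcal{J}^- = \{(\emptyset,(x])\mid |[x)|\geq 2\},\qquad
\mathcal{J}^\circ = \{(\{x\},(x])\mid [x)=\{x\}\},\qquad
\mathcal{J}^+ = \{([x),[x)^\UP)\mid |[x)|\geq 2\},
\]
with $g$ sending $(\emptyset,(x])\in\mathcal{J}^-$ to $([x),[x)^\UP)\in\mathcal{J}^+$ and fixing $\mathcal{J}^\circ$ pointwise. Fix $p,q\in\mathcal{J}$ with $p,q\leq g(p),g(q)$; then $p\leq g(p)$ forces $p\in\mathcal{J}^-\cup\mathcal{J}^\circ$, and likewise for $q$.

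The case analysis is short. If $p\in\mathcal{J}^\circ$, then $g(p)=p$, so $q\leq p$ and $p\leq g(q)$, and taking $f:=p$ witnesses interpolation; the symmetric case $q\in\mathcal{J}^\circ$ is identical. The remaining case, which I expect to be the main (but still manageable) obstacle, is $p,q\in\mathcal{J}^-$. Write $p=(\emptyset,(x])$ and $q=(\emptyset,(y])$ with $|[x)|,|[y)|\geq 2$, so that $g(p)=([x),[x)^\UP)$ and $g(q)=([y),[y)^\UP)$. The hypothesis $q\leq g(p)$ unpacks to $(y]\subseteq [x)^\UP$; applied to $y\in(y]$ itself, it yields $[y)\cap[x)\neq\emptyset$, i.e.\ there exists $z\in U$ with $x,y\lesssim z$. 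I would then set $f:=([z),[z)^\UP)\in\mathcal{J}^+\subseteq\mathcal{J}$ and verify $p,q\leq f\leq g(p),g(q)$: the inclusions $[z)\subseteq[x)\cap[y)$ are immediate from $x,y\lesssim z$, and the inclusions $(x]\cup(y]\subseteq[z)^\UP$ follow because for any $w\lesssim x$ (resp.\ $w\lesssim y$) one has $z\in[w)\cap[z)$, so $w\in[z)^\UP$.

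Since every case produces the required $f\in\mathcal{J}$, $\mathcal{J}$ satisfies the interpolation property, and Theorem~\ref{thm:quasi_nelson_irreducibles} delivers the conclusion that $\alg{RS}$ is a Nelson algebra, with $\to$ given by \eqref{eq:NelsonImplication} inside the Heyting algebra structure described in Proposition~\ref{prop:HeytingImplication}. The only subtlety worth flagging is making sure the description of $g$ on $\mathcal{J}$ is used consistently with the general definition \eqref{eq:brother}, but this has essentially been recorded in the lemma immediately preceding the theorem, so the proof reduces to the common-upper-bound argument above.
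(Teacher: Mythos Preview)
Your proposal is correct and follows the same route as the paper: establish that $\alg{RS}$ is a Kleene algebra on an algebraic lattice, then apply Theorem~\ref{thm:quasi_nelson_irreducibles} by checking the interpolation property for $\mathcal{J}(\mathit{RS})$; the paper merely cites~\cite{Jarv11a} for this last step, whereas you spell out the common-upper-bound argument. One small imprecision: your witness $f=([z),[z)^\UP)$ need not lie in $\mathcal{J}^+$ (if $z$ happens to be a singleton it lies in $\mathcal{J}^\circ$), but by Proposition~\ref{Prop:join_irreducibles} it is in $\mathcal{J}$ regardless, so the argument stands.
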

For the rough sets $(A,B)$ and $(C,D)$, the operation $\to$ is defined by
\begin{align*}
  (A,B) \to (C,D) &= (A,B) \Rightarrow ({\sim}(A,B) \vee (C, D)) \\
  &= (A,B) \Rightarrow (B^c \cup C, A^c \cup D) \\
  & = ( (A^c \cup B^c \cup C)^\DOWN \cap (B^c \cup A^c \cup D)^{\Down \DOWN}, (B^c \cup A^c \cup D)^\Down ) \\
  & = ( (A^c \cup C)^\DOWN \cap (A^c \cup D)^{\Down \DOWN}, (A^c \cup D)^\Down). 
\end{align*}
Note that $A^c$ and $D$ belong to $\mathcal{T}_\lesssim$, meaning that also $A^c \cup D$ is in  $\mathcal{T}_\lesssim$.
Therefore, $(A^c \cup D)^{\Down} = A^{c} \cup D$. This also implies that  $(A^c \cup C)^\DOWN \cap (A^c \cup D)^{\Down \DOWN} = (A^c \cup C)^{\DOWN}$.
We have that
\[ (A,B) \to (C,D) = ((A^c \cup C)^{\DOWN}, A^{c} \cup D).  \]

One can also show that $\alg{RS}$ is a Nelson algebra by using Sendlewski's construction.
Let $\alg{L}$ be a distributive lattice and let $F$ be a lattice filter of $\alg{L}$. The equivalence
\[ \theta_F := \{ (a,b) \in L \times L \mid (\exists c \in F) \, a \wedge c = b \wedge c \} \]
is a congruence on $\alg{L}$. In a pseudocomplemented lattice, an element $a$ is called \emph{dense} if $a^* = 0$.
A filter $F$ is \emph{dense} if it contains all dense elements of $\alg{L}$. In particular,
a filter $F$ is dense  if and only if $\theta_F$ is a Boolean congruence; see e.g.~\cite[Exercise 8.28]{Blyt05}.
In case $\alg{L}$ forms a Heyting algebra, these definitions agree with the definitions of dense elements and filters
given in Section~2 for Brouwerian lattices.

If $X \in \mathcal{T}_\lesssim$ is dense, then $\mathcal{S} \subseteq X$. Indeed, if $x \in \mathcal{S}$, then
$x \in X^\UP = U$ means $[x) \cap X  \neq \emptyset$. Because $x$ is a singleton, we have $x \in X$. 
This gives that if $F = {\uparrow} \mathcal{S}$ is the principal filter generated by $\mathcal{S}$,
then $F$ is contains the filter of all dense elements of $\mathcal{T}_\lesssim$. Therefore,
\[  \theta_\mathcal{S} = \{ (A,B) \in \mathcal{T}_{\lesssim} \times  \mathcal{T}_{\lesssim}   \mid (\exists Z \in {\uparrow} \mathcal{S}) \, A \cap Z = B \cap Z \} \]
is a Boolean congruence on $\zseq{\mathcal{T}_\lesssim;\cup,\cap}$.  It is easy to see that
\[
\begin{gathered}
  (A \cup B) \,  \theta_\mathcal{S} \, U \iff (\exists Z \in {\uparrow} \mathcal{S}) \, Z \cap (A \cup B) = Z \cap U = Z \\
  \iff (\exists Z \in {\uparrow} \mathcal{S}) \, Z \subseteq (A \cup B) \iff \mathcal{S} \subseteq A \cup B.
\end{gathered}
\]
Now
\[ N_{\theta_\mathcal{S}}(\mathcal{T}_\lesssim) = \{ (A,B) \in \mathcal{T}_\lesssim \times \mathcal{T}_\lesssim \mid  A \cap B
= \emptyset \ \text{ and } \ (A \cup B) \,  \theta_\mathcal{S} \, U \} \]
is a Nelson algebra. By replacing $B$ with its complement $B^c$, it follows that
\[ \{ (A,B) \in \mathcal{T}_\lesssim \times \mathcal{T}_\gtrsim \mid  A \subseteq B \ \text{ and } \ \mathcal{S} \subseteq A \cup B^c \} \]
forms a Nelson algebra. The inclusion $\mathcal{S} \subseteq A \cup B^c$ is obviously equivalent to $A \cap \mathcal{S} = B \cap \mathcal{S}$ for all
$(A,B) \in \mathcal{T}_\lesssim \times \mathcal{T}_\gtrsim$ such that $A \subseteq B$.
This means by Theorem~\ref{thm:quasiorder_characterization} that we can use also Sendlewki's construction to show that
$\alg{RS}$ forms a Nelson algebra with the operations already presented in this subsection.

It should be noted that in \cite{Wolski2006} M.~Wolski considered Vakarelov's twist structures $\{ (a,b) \in L \times L \mid a \wedge b = 0\}$ in the
case $L$ is the Alexandrov topology $\mathcal{T}_\lesssim$ and called them as \emph{special $N$-lattices of approximations}. Using this construction,
we obtain a rough set Nelson algebra determined by a quasiorder $\lesssim$ only if $\mathcal{S} = \emptyset$.

The next representation theorem is given in \cite{Jarv11a}.

\begin{theorem} \label{thm:nelson_representation}
Let  $\alg{A} = \zseq{A; \land, \lor,  \to, {\nnot}, 0, 1}$ be a Nelson algebra defined on an algebraic lattice. Then, there exists a set $U$
and a quasiorder $\lesssim$ on $U$ such that $\alg{A} \cong \alg{RS}$.
\end{theorem}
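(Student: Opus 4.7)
The plan is to combine the twist-structure representation of Nelson algebras with the characterisation of Alexandrov topologies as algebraic completely distributive lattices, and then massage the resulting data so that the distinguished dense filter is generated by a set of $\lesssim$-singletons.

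First, I would apply the specialisation of the Fidel--Vakarelov--Sendlewski--Odintsov representation (Theorem~\ref{Prop:OdRep}) to the given $\N$-lattice $\A$. Since $\A$ is an $\N$-lattice (not merely a $\PN$-lattice), this yields an isomorphism $\A \cong Tw(H,F,\{0\})$ for a Heyting algebra $H = B(\A)$ and a dense filter $F = F(\A)\subseteq H$. The map $a \mapsto a/\Xi$ is a surjective complete lattice homomorphism from the lattice reduct of $\A$ to $H$, so the hypothesis that $\A$ is algebraic and completely distributive transfers to $H$. By the equivalence of conditions (Alex1)--(Alex4) in Subsection~\ref{ss:pseudo}, $H$ is isomorphic to the lattice of open sets of an Alexandrov topology; concretely, taking $U_0 := \mathcal{J}(H)$ ordered by $p \lesssim_0 q$ iff $q \leq p$ in $H$, the map $h \mapsto \{p \in \mathcal{J}(H) : p \leq h\}$ is an isomorphism $H \cong \mathcal{T}_{\lesssim_0}$.

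Second, under this isomorphism $F$ corresponds to a dense filter $F' \subseteq \mathcal{T}_{\lesssim_0}$. The crucial observation is that every dense filter in a complete Heyting algebra of the form $\mathcal{T}_\lesssim$ is principal: letting $S := \bigcap F' \in \mathcal{T}_{\lesssim_0}$, one has $F' = {\uparrow}S$, and $F'$ being dense is equivalent to $[x) \cap S \neq \emptyset$ for every $x \in U_0$. To match the characterisation in Theorem~\ref{thm:quasiorder_characterization}, I would then modify the quasiorder so that the points of $S$ become precisely the $\lesssim$-singletons. The simplest way is to define $U := U_0$ and let $\lesssim$ agree with $\lesssim_0$ outside $S$ while restricting each $x \in S$ to $[x)_\lesssim = \{x\}$; a routine check using denseness of $F'$ (which ensures that removing the "upward" $\lesssim_0$-relations from points in $S$ does not change the lattice of up-sets) shows $\mathcal{T}_{\lesssim} = \mathcal{T}_{\lesssim_0}$, while now $\mathcal{S} = S$.

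Third, I would read off the conclusion. Using the Sendlewski-style description of $\mathit{RS}(\lesssim)$ recorded in Subsection~\ref{ss:NelsonRS}, we have
\[
\mathit{RS}(\lesssim) \;=\; \{(A,B) \in \mathcal{T}_\lesssim \times \mathcal{T}_\gtrsim : A\subseteq B,\ \mathcal{S}\subseteq A\cup B^c\} \;\cong\; Tw(\mathcal{T}_\lesssim, {\uparrow}\mathcal{S}, \{0\}).
\]
Composing this isomorphism with the ones from the first two steps gives $\A \cong Tw(H,F,\{0\}) \cong Tw(\mathcal{T}_\lesssim,{\uparrow}\mathcal{S},\{0\}) \cong \mathit{RS}(\lesssim)$, and one verifies that the composite respects $\land$, $\lor$, $\nnot$, and $\to$ (the implication transfer being automatic by Theorem~\ref{thm:quasi_nelson_irreducibles}, since both sides are Nelson algebras on the same lattice reduct via the same $\sim$).

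The main obstacle I foresee is step two: showing that an arbitrary dense filter $F'$ in $\mathcal{T}_{\lesssim_0}$ can be realised as ${\uparrow}\mathcal{S}$ for the singletons of a quasiorder whose open-set lattice is still $H$. The algebraicity of $H$ is essential here to guarantee that $F' = {\uparrow}(\bigcap F')$, and some care is needed to verify that the surgery on $\lesssim_0$ near the points of $S$ preserves all the required up-sets; this is where the density hypothesis on $F$ (equivalently, the fact that every $[x)_{\lesssim_0}$ meets $S$) is used in an essential way.
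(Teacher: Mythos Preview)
Your approach is genuinely different from the paper's and contains a real gap in the third step. The paper does not go through the twist factor $H=B(\A)$ at all: it sets $U=\mathcal{J}(\A)$, defines $\ell\colon\mathcal{J}(\A)\to\mathcal{J}(\A)$ by $\ell(j)=g(j)$ for $j\in\mathcal{J}^+$ and $\ell(j)=j$ otherwise, and declares $j\lesssim k$ iff $\ell(j)\le\ell(k)$. It then checks that the induced map $\mathcal{J}(\A)\to\mathcal{J}(\alg{RS})$ is an order-isomorphism commuting with $g$, which suffices for $\A\cong\alg{RS}$ as De~Morgan (hence Nelson) algebras on algebraic lattices.

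Your quasiorder surgery cannot work as stated, because it can only \emph{create} singletons, never \emph{destroy} them. Take $H=\{0,a,b,1\}$ the four-element Boolean algebra and $F=\{a,1\}$; since the only dense element of $H$ is $1$, this filter is dense, and $\A=Tw(H,F,\{0\})$ is a six-element Nelson algebra on a (finite, hence algebraic) lattice, with $B(\A)=H$ and $F(\A)=F$. Here $U_0=\mathcal{J}(H)=\{a,b\}$ carries the discrete quasiorder $\lesssim_0$, and $S=\{a\}$. Your modification leaves $\lesssim=\lesssim_0$ unchanged (the point $a$ is already a $\lesssim_0$-singleton), so $\mathcal{S}_\lesssim=\{a,b\}\neq S$, and $\mathit{RS}(\lesssim)$ is the four-element Boolean algebra, not isomorphic to the six-element $\A$. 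The underlying reason is that $|\mathcal{J}(B(\A))|$ is in general strictly smaller than $|\mathcal{J}(\A)|$; the paper's choice $U=\mathcal{J}(\A)$ provides exactly the extra points (the pairs $j\in\mathcal{J}^-$, $g(j)\in\mathcal{J}^+$, identified under $\ell$) needed to prevent unwanted singletons. To rescue your route you would have to \emph{enlarge} $U_0$ by duplicating every $\lesssim_0$-singleton outside $S$, which is a substantive additional construction. (A smaller point: your claim that every dense filter of $\mathcal{T}_{\lesssim_0}$ is principal is false as stated---the cofinite filter in $\wp(\mathbb{N})$ is dense but not principal---so you would also need to argue separately that $F(\A)$ is principal when $\A$ is algebraic.)
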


Let us recall the main ideas of the proof of Theorem~\ref{thm:nelson_representation} from \cite{Jarv11a}.
It is noted in that paper that if  $\alg{L} = \zseq{L; \land, \lor, {\nnot}, 0, 1}$ and $\alg{K} = \zseq{K; \land, \lor, {\nnot}, 0, 1}$ 
are two De~Morgan algebras defined on algebraic lattices and $\varphi \colon \mathcal{J} \to \mathcal{K}$ is an order-isomorphism such that
\begin{equation}\label{eq:iso_gee}
  \varphi(g(j)) = g(\varphi(j))
\end{equation}
for all $j \in \mathcal{J}$, then $\alg{L} \cong \alg{K}$. This means that proving $\alg{A} \cong \alg{RS}$ can be done by proving that
the ordered sets of completely irreducible elements of $\alg{A}$ and $\alg{RS}$ are isomorphic and $\varphi$ is compatible with $g$.

We set $U = \mathcal{J}$ and define a mapping $\ell \colon \mathcal{J} \to \mathcal{J}$ by
\[ \ell(j) = \left \{
\begin{array}{ll}
g(j) & \text{if $j \in \mathcal{J}^+$} \\
j    & \text{otherwise}
\end{array}
\right .
\]
Now $\ell(\ell(j)) = \ell(j)$ and $\ell(j) = \ell(g(j))$ for all $j \in \mathcal{J}$. A quasiorder $\lesssim$ on $\mathcal{J}$ is defined by setting
\begin{equation} \label{eq:induced_quasiorder}
j \lesssim k \iff \ell(j) \leq \ell(k) .
\end{equation}

Let $\alg{A}$ be a Nelson algebra defined on an algebraic lattice and let the relation $\lesssim$ on $U = \mathcal{J}$ be defined as in \eqref{eq:induced_quasiorder}.
The isomorphism $\varphi$ satisfying \eqref{eq:iso_gee} between the set $\mathcal{J}$ of the completely join-irreducibles of $A$ and $\mathcal{J}(\mathit{RS})$
is defined by \[ \varphi(j) = \left \{
\begin{array}{ll}
  (\emptyset, \{x\}^\UP) & \text{if $j \in \mathcal{J}^-$},\\
  (\{x\}, \{x\}^\UP)     & \text{if $j \in \mathcal{J}^\circ$},\\
  (R(x), R(x)^\UP)       & \text{if $j \in \mathcal{J}^+$}.
\end{array}
\right .
\]

\begin{example}\label{exa:RS_construction}
Consider the Nelson algebra $\alg{A}$ of Figure~\ref{Fig:figure1}(a). Since $\alg{A}$ is finite, it
is trivially defined on an algebraic lattice. Suppose that the operation $\sim$ is defined by
${\sim}0 = 1$, ${\sim} a = f$, ${\sim} b = e$, and ${\sim} c = d$. The completely join-irreducible
elements $\mathcal{J}$ are marked by filled circles, and we have $g(a) = e$, $g(b) = f$, and $g(d) = d$.
The induced quasiorder on $U = \mathcal{J} = \{a,b,d,e,f\}$ is given in Figure~\ref{Fig:figure1}(b) and
the corresponding rough set structure $\mathit{RS}$ is depicted in Figure~\ref{Fig:figure1}(c).
Recall that the operation $\sim$ is defined in $\alg{RS}$ by ${\sim}(X^\DOWN,X^\UP) = (X^{\UP c}, X^{\DOWN c})$.

\begin{figure}[ht]
\centering
\includegraphics[width=120mm]{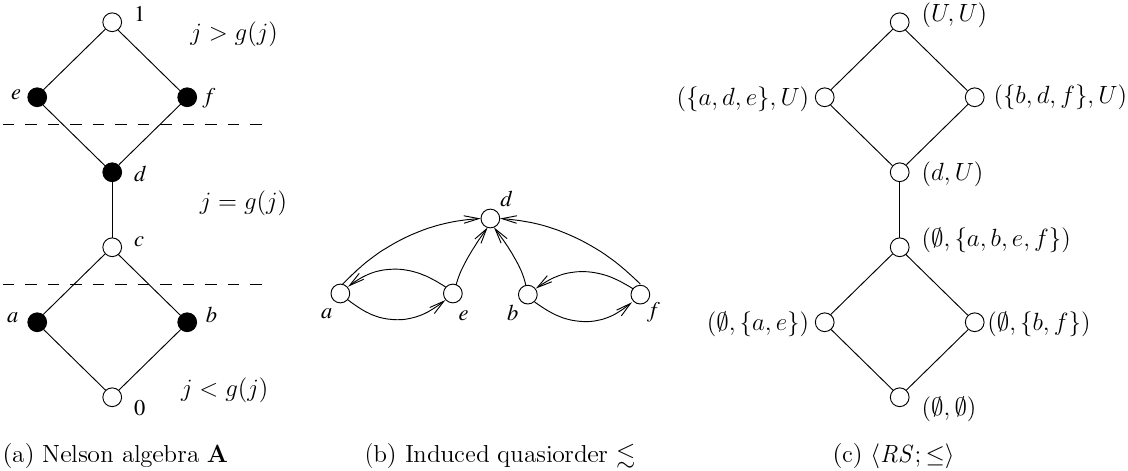}
\caption{\label{Fig:figure1}Constructing $\alg{RS}$ from $\alg{A}$.}
\end{figure}
\end{example}


\begin{proposition}[\cite{Jarv11a}]
If $\lesssim$ is a quasiorder, then the rough set lattice $\mathit{RS}$ is a
three-valued  {\L}ukasiewicz algebra if and only if $\lesssim$ is an equivalence.
\end{proposition}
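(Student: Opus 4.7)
The \emph{if} direction was already noted in Subsection~\ref{ss:rough_equivalence}: when $\lesssim$ is an equivalence, the corollary immediately following Theorem~\ref{thm:Comer} combined with the one-to-one correspondence between regular double Stone algebras and three-valued {\L}ukasiewicz algebras gives the result. The substance of the proof therefore lies in the \emph{only if} direction, which I would prove by contraposition: assume $\lesssim$ is a quasiorder failing symmetry, so there exist $x, y \in U$ with $x \lesssim y$ and $y \not\lesssim x$; the goal is to show that $\alg{RS}$ is not a three-valued {\L}ukasiewicz algebra.

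My plan is to exploit the Nelson-algebra structure of $\alg{RS}$ given by Theorem~\ref{thm:quasiorder_nelson}. Since three-valued {\L}ukasiewicz algebras correspond exactly to semisimple Nelson algebras as recalled in Subsection~\ref{ss:pseudo}, and a Nelson algebra is semisimple iff it satisfies $z \vee (z \to 0) \approx 1$, it suffices to find $(A, B) \in \mathit{RS}$ witnessing failure of this identity. The formula for $\to$ derived in Subsection~\ref{ss:NelsonRS} specialises to $(A, B) \to (\emptyset, \emptyset) = ((A^c)^\DOWN, A^c)$, so the join $(A, B) \vee ((A, B) \to (\emptyset, \emptyset))$ equals $(A \cup (A^c)^\DOWN,\, B \cup A^c)$. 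Since $A \subseteq B$ forces the second coordinate to be $U$, failure of semisimplicity reduces to producing a rough set whose first coordinate $A$ satisfies $A \cup (A^c)^\DOWN \neq U$.

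I would test this at the completely join-irreducible rough set $(A, B) := ([y), [y)^\UP)$, which belongs to $\mathit{RS}$ by Proposition~\ref{Prop:join_irreducibles}. The assumption $y \not\lesssim x$ yields $x \notin [y) = A$, while $x \lesssim y$ together with $y \in A$ yields $y \in [x) \cap A$, whence $[x) \not\subseteq A^c$ and so $x \notin (A^c)^\DOWN$. Hence $x$ is missing from the first coordinate of the join, so $\alg{RS}$ fails semisimplicity and therefore is not a three-valued {\L}ukasiewicz algebra.

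The main potential obstacle is bookkeeping: ensuring the chain of correspondences semisimple Nelson $\leftrightarrow$ regular double Stone $\leftrightarrow$ three-valued {\L}ukasiewicz is invoked in the precise form needed---in particular, that the implication $\to$ of Theorem~\ref{thm:quasiorder_nelson} agrees with the Nelson implication $\triangledown \nnot x \vee y$ coming from the {\L}ukasiewicz structure (both are the unique weak relative pseudocomplement on the underlying Kleene--Heyting structure, so this is automatic). A pleasant feature of this approach is that it avoids any direct analysis of the regularity quasi-identity~\eqref{eq:condM} on $\alg{RS}$, which would otherwise require exhibiting two distinct rough sets sharing both pseudocomplement and dual pseudocomplement.
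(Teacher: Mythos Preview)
The paper does not supply its own proof of this proposition; it is simply quoted from~\cite{Jarv11a}. Your argument is correct and self-contained within the survey: the \emph{if} direction is indeed just the corollary after Theorem~\ref{thm:Comer}, and for the \emph{only if} direction your computation is sound. With $A=[y)=\{y\}^{\Up}$ you have $x\notin A$ because $y\not\lesssim x$, and $x\notin(A^c)^{\DOWN}$ because $y\in[x)\cap A$; hence the first coordinate of $(A,B)\vee\big((A,B)\to(\emptyset,\emptyset)\big)$ omits $x$, so semisimplicity fails. The coherence worry you raise is harmless: the paper explicitly observes in Subsection~\ref{ss:pseudo} that all of the operations (pseudocomplement, dual pseudocomplement, $\nnot$, $\triangledown$, and hence the Nelson $\to$) are determined by the underlying order, so the Nelson implication of Theorem~\ref{thm:quasiorder_nelson} is the only one in play.
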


Recall from Section~\ref{ss:rough_equivalence} that rough sets determined by equivalences
correspond to atomic regular double Stone algebras. Because regular double Stone algebras, semisimple Nelson algebras, 
and three-valued {\L}ukasiewicz algebras are equivalent notions, we obtain that a regular double Stone algebra is atomic if and only if
it is algebraic. In fact, we can write the following proposition.

\begin{proposition} \label{prop:toComer}
Let $\alg{L}$ be a regular double Stone algebra. The following are equivalent:
\begin{enumerate}[label={\rm (\alph*)}]
\item $\alg{L}$ is a direct product of the chains $\mathbf{2}$ and $\mathbf{3}$;
\item $\alg{L}$ is algebraic;
\item $\alg{L}$ is complete and spatial;
\item $\alg{L}$ is complete and atomic.
\end{enumerate}
\end{proposition}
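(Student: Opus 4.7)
The plan is to establish the three equivalences by exhibiting the easy forward direction (a) $\Rightarrow$ (b), (c), (d) and three non-trivial returns to (a). For the forward direction, $\mathbf{2}$ and $\mathbf{3}$ are finite chains and are therefore algebraic, and a product of algebraic lattices is algebraic by \cite[Prop.~I-4.12]{gierz2003} as recalled in Section~\ref{ss:pseudo}, which gives (a) $\Rightarrow$ (b). The completely join-irreducibles of $\mathbf{2}^I \times \mathbf{3}^K$ are exactly the singular tuples whose unique non-zero coordinate is completely join-irreducible in its factor, and each element of the product is the join of those lying below it, yielding (a) $\Rightarrow$ (c). Finally, every non-zero tuple dominates a singular tuple whose non-zero coordinate is an atom of the factor, which gives (a) $\Rightarrow$ (d).

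The crux of the argument is (b) $\Rightarrow$ (a). I would exploit the term equivalence between regular double Stone algebras and semisimple Nelson algebras recalled in Section~\ref{ss:pseudo}: viewing $\alg{L}$ as a semisimple Nelson algebra defined on an algebraic lattice, Theorem~\ref{thm:nelson_representation} supplies an isomorphism $\alg{L} \cong \mathit{RS}(\lesssim)$ for some quasiorder $\lesssim$ on a set $U$. Since $\alg{L}$ is a three-valued \L ukasiewicz algebra, so is $\mathit{RS}(\lesssim)$; the proposition of Section~\ref{ss:NelsonRS} characterising when $\mathit{RS}(\lesssim)$ is a three-valued \L ukasiewicz algebra then forces $\lesssim$ to be an equivalence relation. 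The Gehrke--Walker decomposition from Section~\ref{ss:rough_equivalence} identifies $\mathit{RS}(\lesssim)$ with $\mathbf{2}^I \times \mathbf{3}^K$, where $I$ indexes the singleton classes and $K$ the non-singleton ones. Since the order structure of a regular double Stone algebra uniquely determines its operations (as explicitly noted in Section~\ref{ss:pseudo}), the order-isomorphism obtained is automatically an isomorphism in the intended signature.

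The remaining two returns to (a) are lighter. For (d) $\Rightarrow$ (a), Comer's Theorem~\ref{thm:Comer} directly provides an isomorphism $\alg{L} \cong \mathit{RS}(E)$ for some equivalence $E$, and Gehrke--Walker once more yields the product form. For (c) $\Rightarrow$ (a), I would reduce to (b): by formula~\eqref{eq:heyting_luka} every regular double Stone algebra is a Heyting algebra, so the assumed completeness upgrades $\alg{L}$ to a complete Heyting algebra, which is a frame and therefore satisfies the join-infinite distributive law (JID). Spatiality together with (JID) is exactly condition (Alex4), so by the equivalence of (Alex2) and (Alex4) recalled in Section~\ref{ss:pseudo}, $\alg{L}$ is algebraic, and the conclusion follows from (b) $\Rightarrow$ (a) already proved. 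The most delicate point I anticipate is to certify that semisimplicity is correctly carried across the Nelson isomorphism produced by Theorem~\ref{thm:nelson_representation}, but this is immediate because semisimplicity of a Nelson algebra is captured by the identity $x \vee (x \to 0) \approx 1$ and is preserved under isomorphism.
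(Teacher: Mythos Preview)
Your proof is correct, but it is organised differently from the paper's and relies on heavier machinery at the key step.

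The paper proves a cycle (a)$\Rightarrow$(b)$\Rightarrow$(c)$\Rightarrow$(d)$\Rightarrow$(a), whereas you prove the hub-and-spoke (a)$\Rightarrow$(b),(c),(d) and each of (b),(c),(d) back to (a). For (b)$\Rightarrow$(c), the paper exploits a nice structural observation you bypass entirely: since a regular double Stone algebra carries an induced De~Morgan negation, it is order self-dual, so if $\alg{L}$ is algebraic then it is also dually algebraic, and (Alex3) gives spatiality directly. For (c)$\Rightarrow$(d), the paper first recovers algebraicity exactly as you do (complete Heyting $\Rightarrow$ (JID), then (Alex4)$\Leftrightarrow$(Alex2)), but then argues atomicity elementarily via the map $g$ and the fact that $\mathcal{J}$ has at most two levels, rather than invoking any representation theorem. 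Your route through (b)$\Rightarrow$(a) via Theorem~\ref{thm:nelson_representation}, the characterisation of when $\mathit{RS}(\lesssim)$ is a three-valued {\L}ukasiewicz algebra, and Gehrke--Walker is perfectly valid and arguably more conceptual, but it pulls in the full Nelson representation machinery where the paper's argument stays closer to the order-theoretic ground. For (d)$\Rightarrow$(a) both proofs appeal to Comer; the paper cites \cite[Lemma~2.6]{Comer95} directly for the product decomposition, while you go via Theorem~\ref{thm:Comer} and then Gehrke--Walker, which amounts to the same thing.
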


\begin{proof}
Since $\mathbf{2}$ and $\mathbf{3}$ are finite lattices, they are algebraic. As we have already noted, a direct product of algebraic lattices is algebraic.
Thus, (i) implies (ii). An algebraic lattice is complete by definition. Because $\alg{L}$ is a regular double Stone algebra,
it is distributive and self-dual (via the induced $\sim$).
If $\alg{L}$ is algebraic, its dual is also algebraic. Thus, $\alg{L}$ is spatial by the equivalence of conditions (A1)--(A4), and (ii) implies (iii).

Because $\alg{L}$ is a regular double Stone algebra, it is a Heyting algebra as shown by T.~Kat\-ri\-\v{n}\'{a}k \cite[Theorem~1]{Katrinak73}.
Since $\alg{L}$ is a complete lattice, this means that $\alg{L}$ satisfies (JID). Again, by the  equivalence of (A1)--(A4), this means that $\alg{L}$ is algebraic.
Since $\alg{L}$ is algebraic and regular, the set $\mathcal{J}$ of completely
join-irreducible elements has at most two levels; see \cite[Proposition 4.4]{jarvinen_radeleczki_2018}. Let $x$ be a nonzero element of $L$.
Because $\alg{L}$ is spatial, there is
$j \in \mathcal{J}$ such that $j \leq x$. If $j$ is not an atom, then $g(j)$ is an atom below $j$. Therefore, (iii) implies (iv).
The implication (iv)$\Rightarrow$(i) follows explicitly from \cite[Lemma~2.6]{Comer95}.
\end{proof}
Note also that the equivalent conditions of Proposition~\ref{prop:toComer} do not imply that $\alg{L}$ is atomistic, because atomistic pseudocomplemented lattices are
Boolean algebras; see \cite[Theorem 5.2.]{Erne2015}.

Concerning the discriminator varieties, the (subdirectly irreducible) rough set algebras defined by a quasiorder
admit a discriminator term if and only if the quasiorder is an equivalence. This is because Nelson algebras admit a discriminator term if
and only if they form a semisimple variety, and this holds if and only if the relation is an equivalence. In this case, all the other algebras
defined on them in Section~\ref{ss:rough_equivalence} are term equivalent to semisimple Nelson algebras.

\medskip

Let us briefly recall the topological representation of Nelson algebras by D.~Vakarelov \cite{Vaka77}. 
This is needed to have a general
representation of Nelson algebras in terms of rough sets. The construction here resembles the Priestley duality presented in Section~\ref{ss:pri}.
Let $(X,\leq,g)$ be a structure such that $(X,\leq)$ is an ordered set and $g$ is a map on $X$ satisfying the following conditions for all $x,y \in X$:
\begin{enumerate}[label = {\rm (J\arabic*)}] 
\label{Page:Js}
 \item if $x \leq y$, then $g(y) \leq g(x)$,
 \item $g(g(x)) = g(x)$,
 \item $x \leq g(x)$ or $g(x) \leq x$,
 \item if $x,y \leq g(x),g(y)$, then there is $z \in X$ such that $x,y \leq z \leq g(x),g(y)$.
\end{enumerate}
Following Vakarelov, we call such systems \emph{Monteiro spaces}.
Because $\leq$ is an order, the Alexandrov topology $\mathcal{T}_\leq$ forms a $\mathrm{T_0}$-space, that is, 
for any two different points  $x$ and $y$, there is an open set in $\mathcal{T}_\leq$ which contains one of these points and not the other. 
Using the notation of Section~\ref{ss:pri}, we denote the topology $\mathcal{T}_\leq$ defined by the Monteiro space $(X,\leq,g)$ here by $L(X)$.

Each Monteiro space $(X,\leq,g)$ defines a Nelson algebra
\[ \alg{L} (X) = \zseq{L(X); \cup, \cap, \to, {\sim}, \emptyset, X},\]
where the operations ${\sim}$ and $\to$ are defined by:
\[ {\sim} A := \{ x \in X \mid g(x) \notin A \} \quad \text{ and } \quad A \to B := A \Rightarrow ({\sim} A \cup B).\]
The operation $\Rightarrow$ is defined in the Heyting algebra $\mathcal{T}_\leq$ by
\[ B \Rightarrow C = \{ x \in X \mid x \leq y \text{ and } y \in B \text{ imply } y \in C \}.\]

Vakarelov showed that every Nelson algebra $\alg{A}$ can be embedded into the 
Nelson algebra $\alg{L} (X(\alg{A}))$ defined from
the Monteiro space $(X(A),\subseteq,g)$, where $X(\alg{A})$ is the set of prime filters of $\alg{A} = \zseq{A; \vee,\wedge}$ and
the map $g \colon X(\alg{A}) \to X(\alg{A})$ is defined as
\[ g(P) := \{ x \in A \mid {\sim}x \notin P\}.
\]
The embedding $\alg{A} \to \alg{L}(X(\alg{A}))$ is the same as in Section~\ref{ss:pri}, 
that is,
\[ x \mapsto \{ P \in X(A) \mid x \in P\}. \]
Because the induced Nelson algebra $\alg{L} (X(\alg{A})) $ is such that the underlying lattice is algebraic, there exists an isomorphic rough set
Nelson algebra $\alg{RS}$. This implies the following representation theorem given in \cite{Jarv14}.

\begin{theorem} \label{Thm:NelsonRepresentation}
Let $\alg{A}$ be a Nelson algebra. There is a set $U$ and a quasiorder $\lesssim$ on $U$ such that $\alg{A}$
is isomorphic to a subalgebra of $\alg{RS}$.
\end{theorem}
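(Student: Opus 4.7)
The plan is to combine the two representation results already discussed in this section: Vakarelov's topological embedding of an arbitrary Nelson algebra into the Nelson algebra associated with its Monteiro space of prime filters, and Theorem~\ref{thm:nelson_representation}, which represents any Nelson algebra defined on an algebraic lattice as a rough-set Nelson algebra $\alg{RS}$ induced by a quasiorder.

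Concretely, given an arbitrary Nelson algebra $\alg{A}$, I would first invoke the construction just recalled before the statement: the set $X(\alg{A})$ of prime filters of $\alg{A}$, ordered by inclusion and equipped with the map $g(P) := \{ x \in A : {\sim}x \notin P \}$, forms a Monteiro space $(X(\alg{A}),\subseteq,g)$, and the assignment $a \mapsto \{ P \in X(\alg{A}) : a \in P \}$ yields an embedding of Nelson algebras $\alg{A} \hookrightarrow \alg{L}(X(\alg{A}))$.

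Next, I would observe that the underlying lattice $L(X(\alg{A}))$ is just the family of up-sets of the poset $(X(\alg{A}),\subseteq)$, i.e.~the Alexandrov topology $\mathcal{T}_{\subseteq}$. By the characterisation (Alex1)--(Alex4) collected in Subsection~\ref{ss:pseudo}, every such lattice is algebraic (indeed completely distributive and doubly algebraic). Hence $\alg{L}(X(\alg{A}))$ is a Nelson algebra whose lattice reduct is algebraic, and Theorem~\ref{thm:nelson_representation} supplies a set $U$ and a quasiorder $\lesssim$ on $U$ such that $\alg{L}(X(\alg{A})) \cong \alg{RS}(\lesssim)$. Composing the Vakarelov embedding with this isomorphism yields an embedding $\alg{A} \hookrightarrow \alg{RS}(\lesssim)$, which is precisely the required conclusion.

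The main technical point that requires care, if one were to carry this out in full, is verifying that the Vakarelov map preserves not only the lattice operations but also the weak implication $\to$ and the strong negation ${\sim}$, using the defining conditions (J1)--(J4) of the Monteiro space $(X(\alg{A}),\subseteq,g)$, and in particular that the interpolation property (J4) does hold on $X(\alg{A})$ so that $(X(\alg{A}),\subseteq,g)$ is genuinely a Monteiro space. This is the delicate content of Vakarelov's representation theorem (the counterpart of the filter-theoretic interpolation property appearing in Cignoli's Theorem~\ref{thm:quasi_nelson_filter}); since it has been explicitly invoked in the paragraph preceding the statement, I would simply quote it rather than reprove it, and the rest of the argument is a straightforward composition.
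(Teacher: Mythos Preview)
Your proposal is correct and matches the paper's own argument essentially verbatim: the paragraph immediately preceding the theorem statement does exactly this, embedding $\alg{A}$ into $\alg{L}(X(\alg{A}))$ via Vakarelov, observing that the latter is a Nelson algebra on an algebraic lattice, and then invoking Theorem~\ref{thm:nelson_representation} to obtain the isomorphism with some $\alg{RS}$. There is nothing to add.
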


By applying Theorem~\ref{Thm:NelsonRepresentation}, the following completeness result (with the finite model property) for Nelson logic was proved in \cite{Jarv14}. 

\begin{theorem} \label{Thm:CompleteCLSN}
Let $\alpha$ be a formula of the Nelson logic. The following are equivalent:
\begin{enumerate}[label={\rm (\alph*)}]
 \item $\alpha$ is a theorem,
 \item $\alpha$ is valid in every finite rough set-based Nelson algebra determined by a quasiorder.
\end{enumerate}
\end{theorem}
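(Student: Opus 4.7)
\emph{Proof plan.} I would prove the two implications separately. The forward direction (a)$\Rightarrow$(b) amounts to soundness and is immediate, while the converse reduces to the finite model property for Nelson logic combined with the representation theorem for Nelson algebras on algebraic lattices.

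For (a)$\Rightarrow$(b), I would invoke algebraisability of Nelson logic with respect to the variety of Nelson algebras, together with Theorem~\ref{thm:quasiorder_nelson}, which guarantees that every rough set algebra $\alg{RS}$ induced by a quasiorder is itself a Nelson algebra. Hence any theorem $\alpha$ of Nelson logic corresponds to an identity valid in every Nelson algebra, and so in particular in every finite rough set Nelson algebra determined by a quasiorder.

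For the converse, I would argue contrapositively. Assume $\alpha$ is not a theorem; by algebraisability there is a Nelson algebra $\alg{A}$ and a valuation under which $\alpha$ fails. The strategy is to first replace $\alg{A}$ by a finite Nelson algebra $\alg{A}'$ in which $\alpha$ still fails, and then to apply Theorem~\ref{thm:nelson_representation}: the lattice reduct of $\alg{A}'$ is trivially algebraic, and the construction in the proof of that theorem produces a quasiorder $\lesssim$ on the finite set $U := \mathcal{J}(\alg{A}')$ with $\alg{A}' \cong \alg{RS}$. The failure of $\alpha$ is then transported by this isomorphism to a failure in a finite rough set Nelson algebra determined by a quasiorder, as required.

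The main obstacle is thus the finite model property for Nelson logic, i.e., the passage from $\alg{A}$ to a finite $\alg{A}'$ still refuting $\alpha$. I would establish it by exploiting the twist-structure representation given by Theorem~\ref{Prop:OdRep} specialised to $\N$-lattices: writing $\alg{A} \cong Tw(\A[H], F, \{0\})$ for a Heyting algebra $\A[H]$ and a dense filter $F$, the failure of $\alpha$ on $\alg{A}$ translates into the falsity of a corresponding condition on the pair $(\A[H], F)$; applying the well-known finite model property of intuitionistic propositional logic via a filtration of $\A[H]$ with respect to the finitely many subformulas of $\alpha$ yields a finite Heyting quotient $\A[H]'$ together with an induced dense filter $F'$, so that $Tw(\A[H]', F', \{0\})$ supplies the desired finite Nelson algebra. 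The delicate point is to verify that $F'$ remains dense under the filtration and that the original refuting valuation factors through the quotient in a way that preserves the refutation of $\alpha$.
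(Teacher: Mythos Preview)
Your overall architecture is correct and matches what the paper indicates: soundness gives (a)$\Rightarrow$(b), and for (b)$\Rightarrow$(a) one combines the finite model property of Nelson logic with Theorem~\ref{thm:nelson_representation} (a finite Nelson algebra is defined on an algebraic lattice, so it is isomorphic to $\alg{RS}$ for a quasiorder on the finite set $U=\mathcal{J}$). The paper does not give a proof in the text; it simply cites~\cite{Jarv14} and points to Theorem~\ref{Thm:NelsonRepresentation}. In that reference the finite model property for Nelson logic is taken from the literature (it goes back to Vakarelov's Kripke/Monteiro-space semantics), whereas you attempt to \emph{derive} it via the twist representation. That is a legitimate alternative route, but your sketch leaves the crucial step unfinished.

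The unresolved ``delicate point'' you flag is real, and as written it is a gap. Two concrete issues: (i) the phrase ``filtration of $\A[H]$ with respect to the finitely many subformulas of $\alpha$'' is ill-posed, since $\alpha$ is a Nelson formula and what one must control are the Heyting terms $t_1,t_2$ obtained from the twist translation of $\alpha$; (ii) there is no reason why an algebraic ``filtration'' of $\A[H]$ should send the dense filter $F$ to a dense filter of the quotient. Both problems disappear if you first enlarge: since $Tw(\A[H],F,\{0\})$ is a subalgebra of the \emph{full} twist-structure $Tw(\A[H],H,\{0\})$, any refutation of $\alpha$ persists there, and now $F=H$ is trivially dense. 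The refuting valuation $v(p_i)=\langle a_i,b_i\rangle$ with $a_i\wedge b_i=0$ then witnesses the failure in $\A[H]$ of the single intuitionistic formula
\[
\Big(\bigwedge_i \neg(p_i^+\wedge p_i^-)\Big)\ \to\ t_1(p_1^+,p_1^-,\dots),
\]
where $t_1$ is the first coordinate of the twist translation of $\alpha$. The finite model property of intuitionistic logic yields a finite Heyting algebra $\A[H]'$ and elements $a_i',b_i'$ with $a_i'\wedge b_i'=0$ and $t_1(a',b')\neq 1$; then $Tw(\A[H]',H',\{0\})$ is the desired finite Nelson counter-model. With this adjustment your argument goes through and is a clean, self-contained alternative to citing the finite model property as an external fact.
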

In 1989, P.~A.~Miglioli with his co-authors \cite{Miglioli89a} introduced a constructive logic with strong negation, called \emph{effective logic zero}
and denoted by $E_{0}$, containing a modal operator $\mathbf{T}$ such that for any formula $\alpha$ of $E_{0}$, $\mathbf{T} (\alpha)$
means that $\alpha$ is classically valid. The motivation of the logical system $E_{0}$ was to grasp two distinct aspects of computation in program
synthesis and specification: the algorithmic aspect and data. P.~Pagliani \cite{Pagliani90} gave an algebraic model
for $E_0$, called \emph{effective lattices}. They are special type of Nelson algebras
determined by Glivenko congruences on Heyting algebras. More precisely, for any Heyting algebra $\alg{L}$ and its Glivenko congruence
$\Gamma := \{ (a,b) \in L \times L \mid a^* = b^*\}$, the corresponding \emph{effective lattice} is the Nelson algebra 
$\alg{N}_{\Gamma}(\alg{L})$. Note that for all
$a \in H$, $a \, {\Gamma} \, 1$ if and only if $a$ is dense. This means that 
the universe of $\alg{N}_{\Gamma}(\alg{L})$ 
 consists of the pairs $(a,b)$ such
that $a \wedge b = 0$ and $a \vee b$ is dense (see Remark~2 in \cite{Send90}). It is proved in \cite{Jarv14} that the rough set Nelson algebra
$\alg{RS}$ is an effective lattice iff the set $\mathcal{S}$ of the singletons is cofinal---this means that for all $x \in U$, the principal quasiorder
filter $[x)$ intersects with $\mathcal{S}$.

\medskip

We end this section by considering residuated lattices determined by rough sets. Let
$\alg{A} = \zseq{A; \land, \lor, \to, {\nnot}, 0, 1}$ be a Nelson algebra. 
By recalling from Section~\ref{ss:nasres} the definitions
\[ x * y := {\sim} (x \to {\sim} y) \vee {\sim}(y \to {\sim} x) \quad \text{ and } \quad x \Rightarrow y := (x \to y) \wedge ({\sim} y \to {\sim} x), \]
we obtain a Nelson $\class{FL}_\mathit{ew}$-algebra $\zseq{A; \land, \lor, \imp, 0, 1}$, on which ${\nnot} x = x \imp 0$.

We can define the operations $*$ and $\Rightarrow$ for rough set algebras, obtaining a Nelson $\class{FL}_\mathit{ew}$-algebra
$\zseq{RS; \wedge,\lor, *,\imp, (\emptyset,\emptyset), (U,U)}$, where $\Rightarrow$ is the adjoint operation to $*$, not the relative
pseudocomplement operation considered earlier. Let $(A,B)$ and $(C,D)$ be rough sets. Then, 
\begin{align*}
(A,B) * (C,D) &= {\sim} ((A,B) \to {\sim} (C,D)) \vee {\sim} ((C,D) \to {\sim} (A,B)) \\
  &= {\sim} ((A,B) \to (D^c,C^c)) \vee {\sim} ((C,D) \to (B^c,A^c)) \\
  &= {\sim} ((A^c \cup D^c)^\DOWN, A^c \cup C^c) \vee {\sim} ((C^c \cup B^c)^\DOWN, C^c \cup A^c) \\
  &= ((A^c \cup C^c)^c , (A^c \cup D^c)^{\DOWN c}) \vee ((A^c \cup C^c)^c , (C^c \cup B^c)^{\DOWN c}) \\
  &= (A \cap C, (A \cap D)^\UP) \vee (A \cap C, (B \cap C)^\UP) \\
  &= (A \cap C, (A \cap D)^\UP \cup (B \cap C)^\UP) \\
  &= (A \cap C, ((A \cap D) \cup (B \cap C))^\UP)
\end{align*}
and
\begin{align*}
  (A,B) \Rightarrow (C,D) &= ((A,B) \to (C,D)) \wedge ({\sim} (C;D) \to {\sim} (A,B)) \\
  &= ((Ac \cup C)^\DOWN, A^c \cup D)) \wedge ((D^c,C^c) \to (B^c, A^c)) \\
  &= ((A^c \cup C)^\DOWN, A^c \cup D)) \wedge (D \cup B^c)^\DOWN, D \cup A^c) \\
  &= ((A^c \cup C)^\DOWN \cap (B^c \cup D)^\DOWN, A^c \cup D) \\
  &=  \big ( ((A^c \cup C) \cap (B^c \cup D))^\DOWN, A^c \cup D \big ).
\end{align*}
It can be easily checked that
\[ \big ( (A,B) \Rightarrow (\emptyset,\emptyset) \big ) =  \big ( (A^c \cap B^c)^\DOWN, A^c \big ) =
\big ( B^{c \DOWN}, A^c  \big ) = (B^c, A^C) = {\sim}(A,B).\]
This is because $B \in \mathcal{T}_\gtrsim$ gives $B^c  \in \mathcal{T}_\lesssim$ and so $B^{c \DOWN} = B^c$.

Let us denote by $\Rightarrow^*$ the relative pseudocomplement operation in $\mathit{RS}$.

\begin{proposition} Let $\lesssim$ be a quasiorder on $U$. For all rough sets $(A,B)$ and $(C,D)$,
\[ (A,B) \Rightarrow^* (C,D) \quad \leq \quad (A,B) \Rightarrow (C,D) \quad \leq \quad (A,B) \to (C,D) .\]
\end{proposition}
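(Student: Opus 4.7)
The plan is to verify both inequalities coordinatewise by unfolding the three explicit formulas already computed in the excerpt. Recall:
\begin{align*}
(A,B) \Rightarrow^{*} (C,D) &= \big((A^{c}\cup C)^{\DOWN} \cap (B^{c}\cup D)^{\Down\DOWN},\;(B^{c}\cup D)^{\Down}\big),\\
(A,B) \Rightarrow (C,D) &= \big(((A^{c}\cup C)\cap(B^{c}\cup D))^{\DOWN},\;A^{c}\cup D\big),\\
(A,B) \to (C,D) &= \big((A^{c}\cup C)^{\DOWN},\;A^{c}\cup D\big).
\end{align*}
The order on $\mathit{RS}$ is coordinatewise inclusion, so in each case it suffices to check two set-inclusions using only (i) monotonicity of $^{\DOWN}$ and $^{\Down}$, (ii) the contractivity of these interior operators, $X^{\DOWN}\subseteq X$ and $X^{\Down}\subseteq X$, (iii) the fact that $^{\DOWN}$, being a topological interior, distributes over binary intersections, and (iv) that $A\subseteq B$ (because $(A,B)$ is a rough set) forces $B^{c}\subseteq A^{c}$.

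For the right-hand inequality $(A,B)\Rightarrow(C,D)\leq(A,B)\to(C,D)$, the second coordinates coincide, so nothing is to prove there. For the first coordinates, since $(A^{c}\cup C)\cap(B^{c}\cup D)\subseteq A^{c}\cup C$, applying monotonicity of $^{\DOWN}$ yields $((A^{c}\cup C)\cap(B^{c}\cup D))^{\DOWN}\subseteq(A^{c}\cup C)^{\DOWN}$, as required.

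For the left-hand inequality $(A,B)\Rightarrow^{*}(C,D)\leq(A,B)\Rightarrow(C,D)$, I handle each coordinate separately. On the first coordinate, using that $^{\DOWN}$ distributes over intersections, the target is
\[
(A^{c}\cup C)^{\DOWN}\cap(B^{c}\cup D)^{\Down\DOWN}\;\subseteq\;(A^{c}\cup C)^{\DOWN}\cap(B^{c}\cup D)^{\DOWN},
\]
which reduces to $(B^{c}\cup D)^{\Down\DOWN}\subseteq(B^{c}\cup D)^{\DOWN}$; this follows immediately from $(B^{c}\cup D)^{\Down}\subseteq B^{c}\cup D$ and monotonicity of $^{\DOWN}$. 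On the second coordinate, since $A\subseteq B$ gives $B^{c}\cup D\subseteq A^{c}\cup D$, and $^{\Down}$ is contractive, we obtain $(B^{c}\cup D)^{\Down}\subseteq B^{c}\cup D\subseteq A^{c}\cup D$.

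Since no step uses anything beyond the basic properties of the Alexandrov interior operators and the definition of a rough set (namely $A\subseteq B$), there is no real obstacle; the only point that requires a moment's thought is the interplay between $^{\DOWN}$ and $^{\Down}$ on the generally non-definable set $B^{c}\cup D$, which is resolved by invoking contractivity of $^{\Down}$ before applying $^{\DOWN}$.
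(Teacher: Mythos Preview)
Your proof is correct and follows essentially the same route as the paper's: both verify the two inequalities coordinatewise using $B^{c}\subseteq A^{c}$, contractivity of $^{\Down}$, monotonicity of $^{\DOWN}$, and the distributivity of $^{\DOWN}$ over intersection. You have merely made explicit the justifications that the paper leaves implicit in its chain of displayed inclusions.
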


\begin{proof} Since $B^c \subseteq A^c$, we have
\begin{align*}
  (A,B) \Rightarrow^* (C,D) &= ((A^c \cup C)^\DOWN \cap (B^c \cup D)^{\Down\DOWN}, (B^c \cup D)^\Down) \\
  & \leq ((A^c \cup C)^\DOWN \cap (B^c \cup D)^\DOWN, A^c \cup D) = (A,B) \Rightarrow (C,D) \\
  & \leq ((A^c \cup C)^\DOWN, A^c \cup D) = (A,B) \to (C,D). \qedhere
\end{align*}
\end{proof}

Because Nelson $\class{FL}_\mathit{ew}$-algebras are term equivalent with Nelson algebras, we have that the identities satisfied by
Nelson $\class{FL}_\mathit{ew}$-algebras are the identities satisfied by the finite rough set based residuated lattices determined by a quasiorder.
By Theorem~\ref{Thm:NelsonRepresentation}, a formula $\alpha$ is a theorem iff it is valid in any finite rough set Nelson algebra defined by a quasiorder R.
Therefore, any identity which holds in every finite rough set Nelson $\class{FL}_\mathit{ew}$-algebra holds also in every Nelson $\class{FL}_\mathit{ew}$-algebra.

An element $a$ of a lattice $\alg{L}$ is a \emph{Boolean} if there exists an element $a' \in L$ such that $a \wedge a' = 0$ and $a \vee a' = 1$.
We know that if $\alg{L}$ is distributive, then $a'$ (if it exists) is unique, and is called the \emph{Boolean complement} of $a$.
In view of R.~Cignoli and F.~Esteva \cite{Cignoli2009}, the set of Boolean elements of a bounded residuated lattice 
$\alg{L} =  \zseq{L; \land, \lor, *, \imp, 0, 1}$ are $B(\alg{L}) = \{ x \in L \mid x \vee ( x \imp 0) = 1\}$. 

We say that a rough set $(A,B)$ is \emph{exact} if $A = B$. Note that $X$ is $\lesssim$-definable if and only if $(X^\DOWN,X^\UP)$ is exact.

\begin{proposition} Let $\lesssim$ be a quasiorder. The Boolean elements of the  Nelson $\class{FL}_\mathit{ew}$-algebra
$\zseq{RS; \wedge,\lor, *,\imp, (\emptyset,\emptyset), (U,U)}$ are the exact rough sets.
\end{proposition}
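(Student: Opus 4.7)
The plan is to unfold the definition of a Boolean element of the residuated lattice and compute directly, using the formulas for $\imp$, $\sim$, and $\vee$ already derived in the preceding paragraphs.

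First, I would invoke the computation carried out immediately before the statement, namely
\[
(A,B) \imp (\emptyset,\emptyset) \ = \ {\sim}(A,B) \ = \ (B^c, A^c),
\]
so that the defining condition for $(A,B) \in B(\mathbf{RS})$ becomes $(A,B) \vee (B^c, A^c) = (U,U)$. Note that $(B^c, A^c)$ is automatically a rough set because it is the value of the operation $\sim$ applied to $(A,B) \in \mathit{RS}$, so no separate verification of membership is needed before taking the join.

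Next, by Theorem~\ref{Thm:quasi_complete_lattice}, joins in $\mathbf{RS}$ are taken coordinatewise, so
\[
(A,B) \vee (B^c, A^c) \ = \ (A \cup B^c, \ B \cup A^c).
\]
This pair equals $(U,U)$ if and only if $A \cup B^c = U$ and $B \cup A^c = U$. The first equality is equivalent to $B \subseteq A$ and the second to $A \subseteq B$; conjointly these say $A = B$, which is the definition of $(A,B)$ being exact. Conversely, if $A = B$, then both unions trivially equal $U$, so $(A,B) \in B(\mathbf{RS})$.

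There is essentially no obstacle here: the argument is a two-line verification that hinges only on the already-computed form of $\sim$ and on the coordinatewise description of joins. The only mild subtlety is to remember that the $\imp$ appearing in the definition of $B(\mathbf{L})$ is the residuated (strong) implication from the Nelson $\class{FL}_\mathit{ew}$-structure, not the weak implication $\to$, but the displayed calculation just before the statement confirms that $(A,B) \Rightarrow (\emptyset,\emptyset) = {\sim}(A,B)$, which is exactly what the proof requires.
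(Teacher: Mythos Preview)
Your proof is correct and essentially identical to the paper's: both compute $(A,B)\imp(\emptyset,\emptyset)={\sim}(A,B)=(B^c,A^c)$, take the coordinatewise join, and read off $A=B$. The only cosmetic difference is that the paper uses just the first coordinate equation $A\cup B^c=U$ (giving $B\subseteq A$) together with the standing fact $A\subseteq B$ for rough sets, whereas you use both coordinate equations---but this is an immaterial variation.
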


\begin{proof} Suppose that $((A,B)\imp (\emptyset,\emptyset)) \vee (A,B) = (U,U)$ for some rough set $(A,B)$.
Then $(B^c,A^c) \vee (A,B) = (U,U)$ yields $A \cup B^c = U$, that is, $A^c \subseteq B^c$.
Since $A \subseteq B$ is equivalent to $B^c \subseteq A^c$, we obtain $A^c = B^c$ and $A = B$.
This means that $(A,B)$ is an exact rough set. Conversely, if $(A,A)$ is an exact set,
then  $((A,A) \imp (\emptyset,\emptyset) )\vee (A,A) =  {\sim} (A,A) \vee (A,A) = (A^c \cup A, A^c \cup A) = (U,U)$.
\end{proof}

Let us now consider the operations in the case $\mathit{RS}$ is determined by an equivalence $E$. Now
\[
\begin{gathered}
(A,B) * (C,D) = (A \cap C, ((A \cap D) \cup (B \cap C)) ); \\
 (A,B) \Rightarrow (C,D) =  ( (A^c \cup C) \cap (B^c \cup D), A^c \cup D ).
\end{gathered}
\]
In particular, for exact sets $(X,X)$ and $(Y,Y)$,
\[ (X,X) * (Y, Y) = (X \cap Y, X \cap Y) \quad \text{and} \quad (X,X) \rightarrow (Y, Y) = (X^c \cup Y, X^c \cup Y), \]
which are again exact sets. The operation $*$ is equal to $\wedge$ and
$X^c \cup Y$ is the relative pseudocomplement of $X$ with respect to $Y$ in $\zseq{\mathit{Def}(E);\cup,\cap}$.

Recall that an $\class{FL}_\mathit{ew}$-algebra is  \emph{3-potent} if 
it satisfies the identity $x^3 \approx x^2$
As mentioned in Subsection~\ref{ss:fuca},
Nelson $\class{FL}_\mathit{ew}$-algebras are 3-potent. In particular, in the case of equivalences
$(A,B) * (A,B) =  (A \cap A, ((A \cap B) \cup (B \cap A)) ) = (A,A)$. Thus, $(A,B)^k = (A,A)$ for all $k \geq 2$.

\section*{Concluding remarks}

As mentioned in the Introduction, limitations of space and scope have led us to exclude from the present survey 
a number of recent  and interesting developments in the area Nelson-related logics. Having almost concluded our journey, we would now like to spend a few more words on these directions, which may prove the most fruitful in future research. We begin with the topics cited earlier.

\subsection*{Abstract, universal algebraic characterisations 
of Nelson algebras.} As mentioned at the end of Subsection~\ref{ss:fuca},
the paper~\cite{Nasc1z} characterises the variety of $\N$-lattices as precisely the class of $(0,1)$-congruence orderable involutive CIBRLs. Due to its abstract universal algebraic nature, the notion
of $(c,d)$-congruence orderability may be studied in wider contexts, beginning with non-necessarily involutive algebras 
(see below)
but potentially leading to  generalisations that might be 
applicable to non-pointed classes of algebras, where no algebraic constant is term definable (e.g.~$\PN$-lattices).

\subsection*{Investigations of closely related logico--algebraic systems, such as the substructural logic introduced
by Nelson under the name of $\mathcal{S}$.} This logic has been recently shown to be precisely the logic of the class of 3-potent involutive CIBRLs~\cite{Nasc1y,Nasc1x}. It is worth mentioning that another prominent solution to the question of ``how to extend intuitionistic logic with an involutive negation'' is provided by Moisil's \emph{symmetric modal logic}, whose algebraic counterpart is the variety of symmetric modal algebras~\cite{Monteiro1980}. These algebras, which are comparatively little known, certainly deserve further study, both in isolation and in connection with the most recent developments in the theory of $\N$-lattices.

\subsection*{The extension of the theory of Nelson algebras beyond  the involutive setting.} This project has proved to be  quite fruitful, leading to a substantial research output~\cite{rivieccio2021quasi,riviecciotwonegs,rivieccio2021fragments,rivieccio2019quasi,riviecciosp2021quasi,thiago2021negation}. In our opinion,  two particularly promising lines of research are worth mentioning. Firstly, the abstract study of the twist construction and the limits of its applicability within the setting of substructural logics (see, in particular,~\cite{BuGa}).
Secondly, and most importantly for our present interest, the investigation of potential connections with rough set theory. Indeed,  $\PN$-lattices and the quasi-Nelson algebras introduced in~\cite{rivieccio2019quasi}
are both mild generalisations of $\N$-lattices, obtained by abandoning, respectively,
the explosive and the involutive law. It seems thus natural to speculate whether some suitable generalisation of rough sets induced by quasiorders might provide sufficiently general classes of structures, which one could match with the above-mentioned varieties. 

\subsection*{Structures arising from rough sets.}
Both the algebraic and the order-theoretic structure of rough sets 
systems defined by equivalences were thoroughly established during the
1990s. However, exploring structures induced by various types of relations has yielded fruitful results. Rough sets defined by quasiorders were in a central role in Section~\ref{sec:rough}, where we
discussed how rough sets defined by quasiorders correspond the Nelson
algebras defined on algebraic (e.g. finite) lattices.  Hence, in view of our formerly discussed results, they can be presented as algebras defined on residuated lattices as well. We also know
that rough sets defined by tolerances (reflexive and symmetric
binary relations) induced by irredundant coverings
are (up to isomorphism) the regular pseudocomplemented Kleene algebras defined on algebraic lattices
\cite{jarvinen_radeleczki_2018}.
It is worth noting that, for an arbitrary tolerance, the ordered set defined on the rough set algebra $\mathit{RS}$ is not necessarily a lattice, as highlighted in \cite{Jarvinen2007}. 
In \cite{Umadevi2015}, D.~Umadevi  presented the 
Dedekind-MacNeille completion of $\mathit{RS}$ for
arbitrary binary relations. Subsequently, it was shown in \cite{Jarvinen2023} that, for reflexive relations, this completion forms a pseudo-Kleene algebra. Moreover, a so-called PBZ-lattice can always be defined on it. It is important to notice that 
covering-based rough sets provide another way to generalize  approximations based on equivalences, and there exists an extensive body of literature related to them, as discussed in \cite{Yao2012}. This diversity of approaches means that there are numerous interesting algebraic structures emerging from rough sets defined 
in various settings.

\section*{Acknowledgements}

The idea of writing a survey on Nelson algebras and rough sets was originally proposed by Matthew Spinks, 
and we agreed he would be the fourth author of the present paper. 
Health issues unfortunately obliged him to abandon the project; he did, however,   contribute  
to earlier versions of the paper and   was able to provide us with useful feedback throughout the rest of the  writing process. 
It is therefore only fair to take this occasion for thanking Matthew and  acknowledging his contribution.

We also extend our gratitude for the valuable suggestions provided by the anonymous reviewers.

\bibliographystyle{abbrv}
\bibliography{survey.bib}

\end{document}